\documentclass[a4paper,12pt]{article}

\usepackage[ngerman, american]{babel}
\frenchspacing
\usepackage[utf8]{inputenc}
\usepackage[a4paper, left=2.5cm, right=2.5cm, top=2.5cm, bottom=2.5cm]{geometry}
\usepackage{graphicx}
\usepackage{float}
\usepackage{color,psfrag}
\usepackage{amssymb}
\usepackage{amsmath}
\usepackage{amsthm}
\usepackage{dsfont}
\usepackage{mathrsfs}
\usepackage{color}
\usepackage[automark,headsepline]{scrpage2}
\usepackage{verbatim}
\usepackage[usenames, dvipsnames]{xcolor}
\usepackage{hyperref}



\renewcommand{\phi}{\varphi}
\renewcommand{\rho}{\varrho}
\renewcommand{\epsilon}{\varepsilon}

\newcommand{\cA}{\mathcal{A}}
\newcommand{\cB}{\mathcal{B}}

\newcommand{\cE}{\mathcal{E}}

\newcommand{\cL}{\mathcal{L}}

\newcommand{\CC}{\mathbb{C}}
\newcommand{\RR}{\mathbb{R}}
\newcommand{\NN}{\mathbb{N}}

\newcommand{\dx}{{\mathrm d}}

\newcommand{\I}{\mathds{1}}

\newcommand{\ew}{\mathds{E}}
\newcommand{\supp}{\mathrm{supp}}

\newcommand{\inttail}[2]{\overline{\overline{\Pi}}_{#1}^{(#2)}}
\newcommand{\law}{\cL}

\newcommand{\bR}{\mathbb{R}}
\newcommand{\bN}{\mathbb{N}}

\newcommand{\re}{\mathrm{e}}
\newcommand{\di}{\mathrm{d}}
\newcommand{\ri}{\mathrm{i}}
\newcommand{\one}{\mathds{1}}
\newcommand{\EE}{\mathbb{E}}
\newcommand{\PP}{\mathbb{P}}
\newcommand{\LL}{\mathbb{L}}


\definecolor{violet}{rgb}{0.5, 0.0, 1.0}


\newtheorem{theorem}{Theorem}[section]
\newtheorem{lemma}[theorem]{Lemma}
\newtheorem{corollary}[theorem]{Corollary}

\newtheorem{remark}[theorem]{Remark}
\newtheorem{proposition}[theorem]{Proposition}
\newtheorem{example}[theorem]{Example}

\numberwithin{equation}{section}

\begin{document}

\author{Anita Behme\thanks{Technische Universit\"at Dresden,
		Institut f\"ur Mathematische Stochastik, 01062 Dresden, Germany, e-mail: anita.behme@tu-dresden.de}\,\,, Alexander Lindner
 and Jana Reker\thanks{Ulm University, Institute of Mathematical Finance, 89081 Ulm, Germany, e-mails: alexander.lindner@uni-ulm.de, jana.reker@uni-ulm.de}}
\title{On the law of killed exponential functionals}
\maketitle

\begin{abstract}
For two independent L\'{e}vy processes $\xi$ and $\eta$ and an exponentially distributed random variable $\tau$ with parameter $q>0$ that is independent of $\xi$ and $\eta$, the killed exponential functional is given by $V_{q,\xi,\eta} := \int_0^\tau \re^{-\xi_{s-}} \, \di \eta_s$. With the killed exponential functional arising as the stationary distribution of a Markov process, we calculate the infinitesimal generator of the process and use it to derive different distributional equations describing the law of $V_{q,\xi,\eta}$, as well as functional equations for its Lebesgue density in the absolutely continuous case. Various special cases and examples are considered, yielding more explicit information on the law of the killed exponential functional and illustrating the applications of the equations obtained. Interpreting the case~$q=0$ as~$\tau=\infty$ leads to the classical exponential functional $\int_0^\infty \re^{-\xi_{s-}} \, \di \eta_s$, allowing to extend many previous results to include killing.
\end{abstract}
\noindent
{\em AMS 2010 Subject Classifications:} \, primary:\,\,\,60E07\,\,\,
secondary: \,\,\,60E10, 60J35, 46N30.

\noindent
{\em Keywords:}
Generalized Ornstein-Uhlenbeck Process, Exponential Functional, L\'{e}vy processes, Killing, Generator, Density.

\section{Introduction}
For two independent real-valued L\'evy processes $\xi$ and $\eta$, the \emph{generalised Ornstein--Uhlenbeck process $(X_t)_{t\geq 0}$ driven by $\xi$ and $\eta$} is defined by
\begin{align} \label{eq-GOU}
X_t=\re^{-\xi_t}\Bigl(\int_0^t \re^{\xi_{s-}}\dx\eta_s+X_0\Bigr),\ t\geq0,
\end{align}
where $X_0$ is a starting random variable, independent of $\xi$ and $\eta$. The generalised Ornstein--Uhlenbeck process can equivalently be  defined as the unique solution of the stochastic differential equation
$$\di X_t = X_{t-} \, \di U_t + \di \eta_t, \quad t\geq 0,$$
with starting value $X_0$, where $U$ is another L\'evy process, independent of $\eta$, and defined by the property that
\begin{equation}\label{connection-xi-U}
\cE(U)_t = \re^{-\xi_t},
\end{equation}
where $\cE(U)$ denotes the Dol\'eans--Dade stochastic exponential of $U$, cf. \cite[p.428]{MallerMuellerSzimayer2009}. Note that~\eqref{connection-xi-U} implicitly assumes $U$ not to have jumps of size less or equal to $-1$ due to the exponential function on the right-hand side being strictly positive. The generalised Ornstein--Uhlenbeck process is a Markov process, and it is known (cf. \cite[Thm.~2.1]{LindnerMaller2005}) that, provided $\xi$ and $\eta$ are not both deterministic,  it has an invariant probability distribution if and only if the stochastic integral $\int_0^t \re^{-\xi_{s-}} \, \di \eta_s$  converges almost surely to a finite limit as $t\to\infty$ (see e.g.~\cite{EricksonMaller2005} for necessary and sufficient conditions), in which case the limit random variable
$$V_{0,\xi,\eta} := \int_0^\infty \re^{-\xi_{s-}} \, \di \eta_s := \lim_{t\to \infty} \int_0^t \re^{-\xi_{s-}} \, \di \eta_s$$
is called the \emph{exponential functional of $(\xi,\eta)$}. Due to this connection, the law of $V_{0,\xi,\eta}$ is well-studied in the literature see e.g.~\cite{CarmonaPetitYor1997},~\cite{deHaanKarandikar1989}, the survey by Bertoin and Yor~\cite{BertoinYor2005}, or~\cite{BehmeLindner2015},~\cite{BehmeLindnerMaejima2016},~\cite{BertoinLindnerMaller2008},~\cite{KuznetsovPardoSavov2012},~\cite{PardoRiverovanSchaik} for some more recent results.

\medskip
Introducing jumps of size $-1$ to the process $U$ by adding an atom with mass $q>0$ to the L\'{e}vy measure of $U$, or equivalently considering $\widetilde{U}=U-N$, where $N$ denotes an independent Poisson process with parameter $q>0$, Equation~\eqref{connection-xi-U} yields a L\'{e}vy process~$\smash{\widetilde{\xi}}$ that is killed upon the first jump of $N$, i.e. after an exponential time. The stochastic differential equation
\begin{equation} \label{eq-diff-killing}
\di X_t = X_{t-} \, \di \widetilde{U}_t + \di\eta_t, \quad t \geq 0,
\end{equation}
also has a solution that is unique in law and a Markov process (see~\cite{Behme2015} and~\cite[Sect.~3]{BLRRPart1}). However, the stationary distribution of the process is now given by the {\it killed exponential functional of $(\xi,\eta)$ with parameter~$q$}
\begin{align}\label{def-killedfunctional}
V_{q,\xi,\eta} := \int_0^\tau \re^{-\xi_{s-}} \, d\eta_s,
\end{align}
(cf.~\cite[Sect.~3]{BLRRPart1}) where $\xi$ and $\eta$ are the two independent L\'evy processes defined above, and $\tau$ denotes an exponentially distributed random variable with parameter $q>0$ that is independent of~$\xi$ and~$\eta$. Interpreting the case where the parameter $q$ is equal to zero as $\tau=\infty$, we recover the exponential functional $V_{0,\xi,\eta}=\smash{\int_0^{\infty}\re^{-\xi_{s-}}\dx\eta_s}$ such that $V_{q,\xi,\eta}$ can be seen as a natural generalization of the classical case. Unless the killing is explicitly specified, we always use the term~"exponential functional" to refer to the improper integral. However, we may emphasize the difference by writing "exponential functional without killing" for~$V_{0,\xi,\eta}$.

\medskip
Killed exponential functionals have been studied in~\cite{PardoRiverovanSchaik} and~\cite{PatieSavov} among others, however, most results only cover the case $\eta_t=t$. In a recent paper \cite{BLRRPart1}, we have characterized the support of $V_{q,\xi,\eta}$ and established continuity properties of the law of $V_{q,\xi,\eta}$ for general L\'{e}vy processes $\xi$ and $\eta$. With this approach, various sufficient conditions for absolute continuity were obtained, also yielding new results for the exponential functional without killing. In this paper, we aim to derive different distributional equations for the law of the killed exponential functional, as well as functional equations for its density, and thus study the law of $V_{q,\xi,\eta}$ directly. Although it is rarely possible to give the distribution explicitly, one can do so in the following special cases.
\begin{example}\rm \cite[Thm. 2]{Yor92} \label{ex:Yor}
	Let $q>0$, $\eta_t=t$ and $\xi_t=2B_t+bt$, $t\geq 0$, for some standard Brownian motion $(B_t)_{t\geq 0}$ and $b\in \RR$, then
	$$V_{q,\xi,\eta}\overset{d}= \frac{B_{1,\beta}}{2G_{\alpha}},$$
	where $B_{1,\beta}\sim\text{Beta}(1,\beta)$ and $G_{\alpha}\sim \Gamma(\alpha,1)$ are independent, and
	$$\alpha=\frac{\gamma + b}{2},\beta=\frac{\gamma-b}{2}, \gamma=\sqrt{2q+b^2}.$$
	\end{example}

\begin{example}\rm \cite[Sect.~2]{moehle} \label{ex:ML}
		Fix $\alpha\in(0,1)$, set $q=\Gamma(1-\alpha)^{-1}$, $\eta_t=t$ and let $\xi_t$, $t\geq 0$, be a drift-free subordinator with L\'evy measure
		$$\nu_\xi(\dx x)=\frac{1}{\Gamma(1-\alpha)}\frac{e^{-x/\alpha}}{(1-e^{-x/\alpha})^{\alpha+1}}\I_{(0,\infty)}(x)\dx x.$$
Then $V_{q,\xi,\eta}$ has a Mittag-Leffler distribution with parameter $\alpha$, i.e.
its Laplace transform is a Mittag-Leffler function
$$\EE[e^{-tV_{q,\xi,\eta}}] = E_\alpha(-t)= \sum_{k\geq 0} \frac{(-t)^k}{\Gamma(1+\alpha k)},$$
and the distribution of $V_{q,\xi,\eta}$ has a Lebesgue density $f_{ML}$ given by
$$f_{ML}(s)= \frac{1}{\pi \alpha} \sum_{k\geq 0} \frac{(-1)^{k+1}}{k!} \Gamma(\alpha k + 1) s^{k-1} \sin(\pi \alpha k), \quad s>0.$$
\end{example}

In~\cite{Behme2015}, different distributional equations were derived through methods such as Laplace inversion to study both the law and the density of the exponential functional in the case where $q=0$ and $\eta$ is a subordinator. In~\cite{PardoRiverovanSchaik}, similar equations were derived in the case where $q>0$, $\eta_t=t$ and $\xi$ is a subordinator, also establishing properties of the density such as the limiting behavior at $t=0$. The case $\eta_t=t$, in particular the density of a (possibly killed) exponential functional, has also been studied in~\cite{PatieSavov}. Another approach to derive a distributional equation in the case $q=0$ has been developed in~\cite{KuznetsovPardoSavov2012}. Here, the main tools used are a moment condition and results from Schwartz theory of distributions, with the latter being applicable since the domain of the infinitesimal generator of the underlying Markov process includes the test functions. While studying the method, however, we found that there has been an oversight in the proof of~\cite[Thm.~2.3]{KuznetsovPardoSavov2012} such that the result is not applicable in all of the mentioned cases (see Remark~\ref{validity-analysis}).

After establishing some preliminaries in Section~\ref{s2}, we recall the connection between the killed exponential functional and the solution of~\eqref{eq-diff-killing} from~\cite[Thm.~3.1]{BLRRPart1} in Section~\ref{s3}. From this, we calculate the infinitesimal generator of the process, which is the starting point of the analysis. An integro-differential equation for the characteristic function of the killed exponential functional is  then derived in Section~\ref{s4} using the methods from~\cite{Behme2015} and~\cite{BehmeLindner2015}. Section~\ref{s5} is concerned with deriving a general equation for the law of $V_{q,\xi,\eta}$ through Schwartz theory of distributions using a similar approach to~\cite{KuznetsovPardoSavov2012}, also discussing applications and special cases such as the case without killing. The proofs for the results in this section are given in Section~\ref{s6}. In the final section, we collect several more examples to derive explicit results for the law of the killed exponential functional, also showing that the previously derived equations can be solved explicitly in special cases.

\section{Preliminaries}\label{s2}
A real-valued L\'evy process $L=(L_t)_{t\geq0}$ is a stochastic process having stationary and independent increments that starts in 0 and has almost surely c\`adl\`ag paths, i.e. paths that are right-continuous with finite left-limits. By the L\'evy-Khintchine formula (see e.g.~\cite[Thm. 8.1]{Sato2013}), its characteristic function is given by
$$\phi_{L_t}(z)=\ew \re^{\ri z L_t} = \exp (t \psi_L(z)) , \quad z\in \bR,$$
where $\psi_L$ denotes the characteristic exponent satisfying
\begin{align*}
\psi_L(z)=\ri \gamma_Lz-\sigma_L^2z^2/2 +\int_{\RR}(\re^{\ri zx}-1-\ri zx\one_{\{|x|\leq1\}})\nu_L(\dx x), \quad z\in \bR.
\end{align*}
Here, $\sigma_L^2\geq 0$ is the \emph{Gaussian variance}, $\nu_L$ is the \emph{L\'evy measure} and $\gamma_L$ the \emph{location parameter} of $L$. The characteristic triplet of $L$ is denoted by $(\sigma_L^2,\nu_L,\gamma_L)$. Whenever the L\'{e}vy measure satisfies the condition $\int_{|x|\leq 1} |x|\nu_L(\di x)<\infty$, we can also use the L\'evy-Khintchine formula in the form
\begin{align*}
	\psi_L(u)&= \ri \gamma^0_L u -  \sigma_L^2 u^2/2  +
	\int_{\RR} (\re^{\ri  u x} -1 ) \nu_L(\di x), \quad u\in \bR,
\end{align*}
and call $\gamma_L^0$ the {\it drift} of $L$. See e.g. \cite{Sato2013} for any further information on L\'evy processes.

\medskip
For any c\`{a}dl\`{a}g process $X$, we denote by $X_{s-}$ the left-hand limit of $X$ at time~${s\in(0,\infty)}$ and by $\Delta X_s=X_s-X_{s-}$ its jumps. The law of a random variable~$Y$ is denoted by~$\cL(Y)$. Further, we write $\smash{\overset{d}{=}}$ for equality in distribution. If the random variable $Y$ is exponentially distributed with parameter $q\geq0$, we set~$Y\smash{\overset{d}{=}}{\rm{Exp}}(q)$ with the case $q=0$ being interpreted as $Y\equiv\infty$ almost surely. The (one-dimensional) Lebesgue measure is denoted by $\lambda$ and absolute continuity, as well as densities, are always assumed to be with respect to $\lambda$ unless stated otherwise. The Dirac measure at $x\in\RR$ is denoted by $\delta_x$ and $\I_A$ denotes the indicator function of the set $A\subset\RR$. We further write ''a.s.'' and ''a.e.'' to abbreviate ''almost surely'' and ''almost every(where)'', respectively. The image measure of a general measure~$m$ under~a mapping~$g$ is denoted by $g(m)$. When given two measures~$m_1,m_2$, their convolution is denoted by $m_1\ast m_2$. We use the same notation for the convolution of a measure $m$ with an integrable function $g$, by which we mean the function with value~$(m\ast g)(x)=\smash{\int_{\RR}g(x-y)m(\dx y)}$ at $x\in\RR$.

\medskip
When considering integrals, we assume the integral bounds to be included when using the notation $\smash{\int_a^b}$ for $a,b\in\RR$ with $a\leq b$ and indicate that the left or right bound is excluded by writing~$\smash{\int_{a+}^b}$ or $\smash{\int_a^{b-}}$, respectively. The notation $\smash{\int_{a+}^b}$ for $a\geq b$ is to be interpreted as~$-\smash{\int_{b+}^a}$, such that mappings of the form $x\mapsto \int_{0+}^xh(s)m(\dx s)$ are càdlàg functions. Integrals like~$\smash{\int_0^t \re^{-\xi_{s-}} \, \di \eta_s}$ are interpreted as integrals with respect to semimartingales as e.g. in \cite{Protter2005}. Given a L\'evy process $L=(L_t)_{t\geq 0}$ (or more generally, a semimartingale), its stochastic exponential $\cE(L)= (\cE(L)_t)_{t\geq 0}$ is the unique semimartingale $Z=(Z_t)_{t\geq 0}$ that satisfies the stochastic differential equation $\di Z_t = Z_{t-}\, \di L_t$ with~$\cE(L)_0=1$, i.e. which satisfies
$Z_t = 1 + \smash{\int_0^t Z_{t-}}  \, \di L_t$ for all $t\geq 0.$ By the Doleans-Dade formula (\cite[Thm.~II.37]{Protter2005}), it is given by
\begin{equation} \label{eq-StoExp}
\cE (L)_t = \re^{L_t - t \sigma_L^2/2} \prod_{0<s\leq t} (1 + \Delta L_s) \re^{-\Delta L_s},\quad t\geq0.
\end{equation}
It follows that $\cE(L)$ is almost surely strictly positive for all times if and only if ${(1+\Delta L_s) > }0$ for all times, i.e. if $\nu_L((-\infty,-1]) = 0$. Now let $\xi$ be a L\'evy process on $\bR$. Then it is easy to see that $U=(U_t)_{t\geq 0}$, defined by
\begin{equation} \label{eq-U}
U_t = -\xi_t + t \sigma_\xi^2 / 2 + \sum_{0<s\leq t} \left( \re^{-\Delta \xi_s} - 1 + \Delta \xi_s\right)
\end{equation}
is also a L\'evy process, and using \eqref{eq-StoExp} it is readily checked that $\cE(U)_t = \re^{-\xi_t}$. Taking the logarithm in \eqref{eq-StoExp}  recovers $\xi$ from $U$ via
\begin{equation} \label{eq-U2}
\xi_t = -U_t + t \sigma_U^2/2 - \sum_{0<s\leq t} \big( \ln (1+\Delta U_s) - \Delta U_s\big), \quad t\geq 0.
 \end{equation}
It follows that \eqref{eq-U} defines a bijection from the class of all L\'evy processes $\xi$ to the class of L\'evy processes $U$ with $\nu_U ((-\infty,-1]) = 0$, with inverse given by \eqref{eq-U2}, and the relation is described by $\cE(U) = \re^{-\xi}$. The characteristic triplet of $\xi$ in terms of that of $U$ has been derived in \cite[Lemma 3.4]{BehmeLindnerMaller2011} and is given by
\begin{align*}
\sigma_\xi^2 &= \sigma_U^2, \quad \nu_\xi = g(\nu_U), \\
 \gamma_\xi &= - \gamma_U + \sigma_U^2/2 + \int_{(-1,\infty)} \left( x \one_{\{|x|\leq 1\}} - (\ln(1+x))\one_{\{x \in [\re^{-1}-1,\re -1 ]\}} \right)\, \nu_U(\di x),
 \end{align*}
where $g:(-1,\infty) \to \bR$ is defined by $g(x) = -\ln(1+x)$. From this it is readily seen that the characteristic triplet of $U$ in terms of $\xi$ is expressed by
\begin{equation*} \label{eq-rel2}
\sigma_U^2 = \sigma_\xi^2, \quad \nu_U = h(\nu_\xi), \quad \gamma_U = - \gamma_\xi - \sigma_\xi^2/2 + \int_{[-\log 2, \infty)} \left[ \left( \re^{-x} - 1 \right) + x \one_{\{ |x|\leq 1\}} \right] \, \nu_\xi (\di x),
\end{equation*}
where $h:\bR \to (-1,\infty)$ is given by $h(x) = \re^{-x} - 1$.
We also see from \eqref{eq-U} and \eqref{eq-U2} that~$U$ is of finite variation if and only if $\xi$ is, in which case the drifts are related by $\smash{\gamma_U^0 = - \gamma_\xi^0}$.

\medskip
Throughout the analysis, we denote the space of continuous functions $\RR\rightarrow\RR$ by~$C(\RR)$. The subspaces of bounded functions, functions vanishing at infinity and compactly supported functions are referred to as $C_b(\RR)$, $C_0(\RR)$ and $C_c(\RR)$, respectively. For a number~$n\in\NN$, we denote by $C^n(\RR)$ the space of functions $\RR\rightarrow\RR$ that are $n$ times continuously differentiable with $C^{\infty}(\RR)$ denoting that the property holds for every $n$. Functions in the subspaces $C^n_0(\RR)$ are $n$ times continuously differentiable with the function itself, as well as the first $n$ derivatives vanishing at infinity. The spaces $C_c^n(\RR)$ are defined analogously and~$C_c^{\infty}(\RR)$ is also referred to as the space of test functions. The domain of a linear operator~$\cA$ is denoted by $D(\cA)$.

\section{Killed exponential functionals as invariant distributions of Markov processes}\label{s3}
It was shown in~\cite[Thm.~3.1]{BLRRPart1} that the law of the killed exponential functional describes the stationary distribution of a Markov process. In this section, we aim to build on this result by explicitly calculating the infinitesimal generator of this Markov process. First, we recall the central result of~\cite[Thm.~3.1]{BLRRPart1} for the reader's convenience. Note that the case $q=0$ was already shown in~\cite[Thm.~2.1]{BehmeLindnerMaller2011}.

\begin{proposition}\label{prop-sdewithkilling}
	Let $\xi$ and $\eta$ be two independent L\'evy processes and $q\in [0,\infty)$. Define the L\'evy process $U$ by \eqref{eq-U} such that
		$\mathcal{E}(U) =\re^{-\xi}$
		and let $N$ either be a Poisson process with parameter $q>0$ that is independent of $U$ and $\eta$ or $N\equiv0$ if $q=0$. Define
		$$\widetilde{U} := U - N .$$
		Then $\widetilde{U}$ is a L\'evy process with characteristic triplet $(\sigma_U^2,\nu_U+q\delta_{-1},\gamma_U-q)$. Further,
\begin{equation}\label{eq-representations}
V_{q,\xi,\eta}\stackrel{d}{=}\int_0^\infty \mathcal{E}(\widetilde{U})_{s-} \, \di \eta_s
\end{equation}
whenever the right-hand side converges almost surely. In this case
$\smash{\law(V_{q,\xi,\eta})}$ is the unique invariant probability measure of the Markov process $\smash{{ \widetilde{V}=(\widetilde{V}_t)_{t\geq 0}}}$ satisfying the stochastic differential equation
	\begin{equation} \label{eq-diff1}
	\di \widetilde{V}_t = \widetilde{V}_{t-} \, \di \widetilde{U}_t + \di\eta_t, \quad t \geq 0,
	\end{equation}
	with starting random variable $\smash{\widetilde{V}_0}$ independent of $\widetilde{U}$ and $\eta$.
\end{proposition}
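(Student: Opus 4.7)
The plan is to verify the three assertions in turn, using the additive structure of $\widetilde{U}=U-N$ and the Dol\'eans--Dade formula~\eqref{eq-StoExp}. For the triplet of $\widetilde{U}$, note that the L\'evy--Khintchine representation of a Poisson process of rate $q$ gives $-N$ the triplet $(0,q\delta_{-1},-q)$; independent L\'evy triplets add coordinatewise, which immediately yields the claimed $(\sigma_U^2,\nu_U+q\delta_{-1},\gamma_U-q)$.

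Next, identify $\mathcal{E}(\widetilde{U})$. Let $\tau_1\sim\mathrm{Exp}(q)$ be the first jump time of $N$, independent of $U$ and $\eta$. Since $U$ and $N$ almost surely share no jump times, the product in~\eqref{eq-StoExp} applied to $\widetilde{U}$ separates over the jumps of $U$ and of $-N$. For $t<\tau_1$ no jump of $-N$ has occurred yet and $\widetilde{U}_t=U_t$, so $\mathcal{E}(\widetilde{U})_t=\mathcal{E}(U)_t=\re^{-\xi_t}$; at $t=\tau_1$ the factor $(1+\Delta\widetilde{U}_{\tau_1})=0$ enters the product and forces $\mathcal{E}(\widetilde{U})_t=0$ for all $t\geq\tau_1$. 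Passing to left-hand limits gives $\mathcal{E}(\widetilde{U})_{s-}=\re^{-\xi_{s-}}\I_{(0,\tau_1]}(s)$, whence by independence of $\tau_1$ and $(\xi,\eta)$
\begin{equation*}
\int_0^\infty \mathcal{E}(\widetilde{U})_{s-}\,\di\eta_s=\int_0^{\tau_1}\re^{-\xi_{s-}}\,\di\eta_s\stackrel{d}{=}V_{q,\xi,\eta},
\end{equation*}
which is~\eqref{eq-representations}. For the invariance claim, apply classical generalised Ornstein--Uhlenbeck theory to the driving pair $(\widetilde{U},\eta)$: the SDE~\eqref{eq-diff1} has a unique (in law) c\`adl\`ag solution which is a Markov process, and when the stochastic integral above converges a.s., results such as~\cite[Thm.~2.1]{LindnerMaller2005} and~\cite[Thm.~2.1]{BehmeLindnerMaller2011} identify its law as the unique invariant probability measure of that Markov process.

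The main obstacle is that the classical GOU references assume $\nu_U((-\infty,-1])=0$, so that $\mathcal{E}(U)$ remains strictly positive, whereas here $\nu_{\widetilde{U}}(\{-1\})=q>0$ and $\mathcal{E}(\widetilde{U})$ is absorbed at $0$ at time $\tau_1$. Extending the existence and uniqueness of the invariant law to this killed setting is the technical heart of the proof. A direct route uses the strong Markov property at $\tau_1$: because $\widetilde{V}_{\tau_1}=\widetilde{V}_{\tau_1-}(1+(-1))+\Delta\eta_{\tau_1}=0$ almost surely (as $\eta$ has no jump at $\tau_1$), the jump times of the independent Poisson process $N$ form regeneration epochs. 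A renewal argument applied between successive regenerations then reconstructs~\eqref{eq-representations} and simultaneously yields uniqueness of the stationary law, completing the proof.
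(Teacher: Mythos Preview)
The paper does not prove this proposition; it is explicitly stated as a recall of \cite[Thm.~3.1]{BLRRPart1} (with the case $q=0$ attributed to \cite[Thm.~2.1]{BehmeLindnerMaller2011}), so there is no in-paper proof to compare against.

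Your argument is the natural one and the first two assertions are fully justified: the triplet of $\widetilde{U}$ follows by additivity of independent L\'evy triplets, and the identification $\mathcal{E}(\widetilde{U})_{s-}=\re^{-\xi_{s-}}\I_{(0,\tau_1]}(s)$ via the Dol\'eans--Dade formula~\eqref{eq-StoExp} gives~\eqref{eq-representations} directly. You also correctly locate the only genuine difficulty---the cited GOU references \cite{LindnerMaller2005,BehmeLindnerMaller2011} assume $\nu_U((-\infty,-1])=0$ and hence do not apply verbatim---and the regeneration observation $\widetilde{V}_{\tau_k}=0$ at each Poisson epoch is exactly the right structural fact. What remains a sketch in your last paragraph is the passage from this regeneration structure to invariance and uniqueness: one still has to check, e.g.\ by conditioning on $\{\tau_1\leq t\}$ versus $\{\tau_1>t\}$ and using the explicit GOU solution between regenerations, that $\widetilde{V}_0\stackrel{d}{=}V_{q,\xi,\eta}$ implies $\widetilde{V}_t\stackrel{d}{=}V_{q,\xi,\eta}$, and that any invariant law agrees with this one. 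Since the present paper outsources this step entirely to \cite{BLRRPart1} and \cite{Behme2015}, your write-up already contains more detail than what appears here.
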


The infinitesimal generator $\cA^{\widetilde{V}}$ of $(\widetilde{V}_t)_{t\geq 0}$ is the linear operator defined by
\begin{displaymath}
\cA^{\widetilde{V}} f (x)= \lim_{t\to 0} \frac{\EE^x [f(\widetilde{V}_t^x)] - f(x)}{t}, \quad x\in \bR,
\end{displaymath}
on the set of functions $f\in C_b(\bR)$ for which this limit exists uniformly in $x$. Here, $\widetilde{V}_t^x$ denotes the solution of~\eqref{eq-diff1} with initial value $\widetilde{V}_0^x=x$ and $\ew^x$ denotes the corresponding expectation. As a starting point of the analysis, consider the generalized Ornstein--Uhlenbeck process, i.e. the Markov process given by
$$V_t^x = x + \int_0^t V_{s-}^x \, \dx U_t + \eta_t,$$
which is the solution of the differential equation $dV_t = V_{t-} \, dU_t + d\eta_t$ with starting random variable $V_0^x = x$. As shown in \cite[Thm.~3.1, Cor.~3.2,  Cor.~3.3]{BehmeLindner2015}, $(V_t^x)_{t\geq 0}$ is a (rich) Feller process and the domain of its infinitesimal generator $\cA^{V}$ contains the space
$$C_{0,pl}^2(\bR) := \left\{ f \in C^2_0(\bR) : \lim_{|x|\to\infty} \left( |xf'(x)| + |x^2 f''(x)| \right) = 0\right\}, $$
where the added subscript $pl$ refers to the power law decay of the derivatives, on which~$\cA^V$ acts by
\begin{align}
\cA^{V}f(x) &=\cA^\eta f(x) - f'(x) x \gamma_\xi + \frac12 (f''(x) x^2 + f'(x) x) \sigma_\xi^2 \nonumber \\
&\quad+ \int_\bR \left( f(x\re^{-y}) - f(x) + f'(x) xy \I_{\{|y|\leq 1\}} \right) \nu_\xi(\dx y)\nonumber\\
&=\cA^\eta f(x)+xf'(x)\gamma_U+\frac{1}{2}x^2f''(x)\sigma_U^2\nonumber\\
&\quad+\int_{\RR}\big(f(x+xy)-f(x)-xyf'(x)\I_{\{|y|\leq1\}}\big)\nu_{U}(\dx y), \label{eq-gen1}
\end{align}
where $\cA^{\eta}$ denotes the infinitesimal generator of the L\'{e}vy process $\eta$ given by
\begin{align*}
\cA^\eta f(x)=\gamma_\eta f'(x) + \frac12 \sigma_\eta^2f''(x)+\int_\bR \left( f(x+y) - f(x) - f'(x)y \mathbf{1}_{|y|\leq 1} \right) \nu_\eta(\dx y)
\end{align*}
for ${f\in C^2_{0,pl}(\RR)}$. From this we can derive the generator of $\widetilde{V}$ as follows.

\begin{theorem} \label{thm-gen1}
	Let $q\in[0,\infty)$, $\smash{(\widetilde{V}_t)_{t\geq 0}}$ as defined in \eqref{eq-diff1} and assume that $V_{0,\xi,\eta}$ converges almost surely whenever $q=0$ is considered. Then the set $C^2_{0,pl}(\bR)$ is contained in $D(\cA^{\widetilde{V}})$, and for~$f\in C^2_{0,pl}(\bR)$ we have
	$$\cA^{\widetilde{V}} f(x) = \cA^{V} f(x) + q \left( f(0) - f(x) \right),$$
	with $\cA^{V}$ as given in \eqref{eq-gen1}.
\end{theorem}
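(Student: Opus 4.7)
The plan is to exploit the structure $\widetilde U=U-N$ with $N$ an independent Poisson process of rate $q$: apart from the jumps of $N$, the killed process $\widetilde V^x$ behaves exactly like the unkilled generalized Ornstein--Uhlenbeck process $V^x$ analysed in \cite{BehmeLindner2015}, and the jumps of $N$ reset $\widetilde V^x$ to zero. When $q=0$ the claim is already contained in \cite[Thm.~3.1, Cor.~3.3]{BehmeLindner2015}, so I would focus on $q>0$ and express the semigroup of $\widetilde V$ in terms of that of $V$ via a first-jump decomposition, then differentiate at $t=0$.

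The first step would be to verify the reset identity at the first jump time $\tau_1$ of $N$. Since $U$, $\eta$ and $N$ are independent L\'evy processes, almost surely neither $U$ nor $\eta$ jumps at $\tau_1$, so $\Delta\widetilde U_{\tau_1}=-1$ and $\Delta\eta_{\tau_1}=0$, and the SDE \eqref{eq-diff1} gives $\widetilde V^x_{\tau_1}=\widetilde V^x_{\tau_1-}(1-1)+0=0$. On the event $\{t<\tau_1\}$ the processes $\widetilde U$ and $U$ coincide on $[0,t]$, so pathwise uniqueness of \eqref{eq-diff1} yields $\widetilde V^x_t=V^x_t$ there. Conditioning on $\tau_1\sim{\rm Exp}(q)$ and invoking the strong Markov property of $\widetilde V$ at $\tau_1$ then produces the renewal identity
$$\widetilde P_t f(x)=\re^{-qt}P_tf(x)+\int_0^t q\re^{-qs}\widetilde P_{t-s}f(0)\,\di s,$$
where $P_t f(x)=\EE^x[f(V_t^x)]$ and $\widetilde P_t f(x)=\EE^x[f(\widetilde V_t^x)]$ are the semigroups of $V$ and $\widetilde V$.

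Next I would subtract $f(x)$, divide by $t$, and let $t\downarrow 0$ term by term. Splitting as
$$\frac{\re^{-qt}P_tf(x)-f(x)}{t}=\re^{-qt}\frac{P_tf(x)-f(x)}{t}+f(x)\frac{\re^{-qt}-1}{t},$$
the first summand converges to $\cA^V f(x)$ uniformly in $x$ since $f\in C^2_{0,pl}(\RR)\subset D(\cA^V)$ and $\cA^V f\in C_0(\RR)$ is bounded, while the second converges uniformly to $-qf(x)$ by boundedness of $f$. The final summand $t^{-1}\int_0^t q\re^{-qs}\widetilde P_{t-s}f(0)\,\di s$ does not depend on $x$, and its convergence to $qf(0)$ reduces to strong right-continuity of $\widetilde P$ at $0$, which follows from the Feller property of $\widetilde V$ (available from~\cite[Sect.~3]{BehmeLindner2015} applied to the driving pair $(\widetilde U,\eta)$). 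Summing gives $\cA^{\widetilde V}f(x)=\cA^Vf(x)+q(f(0)-f(x))$, and the uniformity of each piece ensures $f\in D(\cA^{\widetilde V})$.

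The main obstacle is the first-jump decomposition itself: checking rigorously that $\widetilde V^x_{\tau_1}=0$ (which relies on the almost-sure absence of simultaneous jumps of independent L\'evy processes) and applying the strong Markov property at $\tau_1$ to get the renewal equation for the semigroups. Once that identity is in hand, the generator formula follows from a routine term-by-term limit, and as a sanity check it matches what one obtains by naively plugging the triplet $(\sigma_U^2,\nu_U+q\delta_{-1},\gamma_U-q)$ of $\widetilde U$ into the formula \eqref{eq-gen1} for $\cA^V$.
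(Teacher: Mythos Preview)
Your approach is correct and essentially the same as the paper's: both exploit that $\widetilde V^x$ coincides with $V^x$ until the first jump of $N$ and is reset to $0$ there, then differentiate the resulting decomposition at $t=0$. The paper conditions on $N_t\in\{0,1,\ge 2\}$ rather than on the first jump time $\tau_1$, which is a cosmetic difference; one small caveat is that \cite[Sect.~3]{BehmeLindner2015} assumes $\nu_U((-\infty,-1])=0$, which $\widetilde U$ violates, but all you actually need for the last term is the pointwise convergence $\widetilde P_r f(0)\to f(0)$, immediate from right-continuity of paths and bounded convergence.
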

\begin{proof} The case $q=0$ was shown in~\cite{BehmeLindner2015}. Let $q>0$ and $f\in C_{0,pl}^2(\bR)$. Then for each $t>0$,
	\begin{eqnarray}
	\frac{\EE^x [f(\widetilde{V}_t^x)] - f(x)}{t} & = & \frac{\EE^x [f(\widetilde{V}_t^x|N_t=0)] \PP(N_t=0) - f(x)}{t} \nonumber \\
	& & + \EE^x [f(\widetilde{V}_t^x)|N_t=1] \frac{\PP(N_t=1)}{t} \nonumber \\
	& & + \EE^x [f(\widetilde{V}_t^x)|N_t\geq 2] \frac{\PP(N_t\geq 2)}{t}. \label{eq-gen2}
	\end{eqnarray}
	Since $f$ is bounded and $\PP(N_t\geq 2) = o(t)$ as $t\to 0$, the last term tends to 0, uniformly in $x\in \bR$, as $t\to 0$. Denote the time of the last jump of $N$ before $t$ by $T(t)$. Then
	$$\widetilde{V}_t = \re^{-\xi_t} \left( x + \int_0^t \re^{\xi_{s-}}\, \dx\eta_s \right) \mathds{1}_{\{N(t)=0\}} + \left( \re^{-(\xi_t - \xi_{T(t)})} \int_{(T(t),t]} \re^{\xi_s-\xi_{T(t)}} \, \dx\eta_s\right) \mathds{1}_{\{N(t)\geq 1\}}$$ by \cite[Prop. 3.2]{BehmeLindnerMaller2011}. Since $\PP(N_t=1) = qt \re^{-qt}$, we conclude from this that
	$$\lim_{t\to 0} \EE^x [f(\widetilde{V}_t^x)|N_t=1] \frac{\PP(N_t=1)}{t} = f(0) q,$$ uniformly in $x$. Finally, since $\PP(N_t=0) = e^{-qt}$ we can write
	\begin{align*}
		\frac{\EE^x[f(\widetilde{V}_t^x|N_t=0)] P(N_t=0) - f(x)}{t} = \frac{\EE^x[f(V_t^x)] - f(x)}{t} + \frac{\re^{-qt}-1}{t} \EE^x[f(V_t^x)].
	\end{align*}
	The first of these terms converges uniformly in $x$ to $\cA^{V}f(x)$ as $t\to 0$, and the second uniformly to $-q f(x)$ since $\EE^x[f(V_t^x)]$ converges uniformly to $f(x)$ since $(V_t^x)_{t\geq 0}$ is a Feller process (\cite[Thm. 3.1]{BehmeLindner2015}).
	Together with \eqref{eq-gen2} this gives the claim.
\end{proof}

\begin{remark}\label{remark-alternative-forms}\rm (i) Alternatively, the above theorem could be shown following the proof of Theorem 3.1 and Corollary 3.2 in \cite{BehmeLindner2015} and replacing $U$ by $\widetilde{U}$ to allow for jumps of size~$-1$. Observing that the characteristics $\gamma_U$ and $\gamma_{\widetilde{U}}$ differ by~${q = \smash{- \int_{\{-1\}} y \nu_{\widetilde{U}}(dy)}}$ then leads to the same result.\\
(ii) Aside from the expression in Theorem~\ref{thm-gen1}, the operator $A^{\widetilde{V}}$ can also be given in terms of the characteristics of $\smash{\widetilde{U}}$. As $\nu_{\widetilde{U}}(\{-1\})=q$ we have for$f\in C_{0,pl}^2(\bR)$ that
\begin{align}
\cA^{\widetilde{V}} f(x)&=\cA^\eta f(x)+xf'(x)\gamma_{\widetilde{U}}+\frac{1}{2}x^2f''(x)\sigma_{\widetilde{U}}^2\nonumber\\
&\quad+\int_{\RR}\big(f(x+xy)-f(x)-xyf'(x)\I_{[-1,1]}(y)\big)\nu_{\widetilde{U}}(\dx y)\label{eq-genkilledgou}
\end{align}
which is~\eqref{eq-gen1} with $U$ replaced by $\widetilde{U}$.
\end{remark}

The key to deriving the equations describing $\cL(V_{q,\xi,\eta})$ in the following sections lies in the fact that the law of the killed exponential functional is the unique invariant probability law of the Markov process in \eqref{eq-diff1} and thus the equation
\begin{equation}\label{eq-invmeasure}
\int_{\bR} \cA^{\widetilde{V}}f(x) \, \law(V_{q,\xi,\eta})(\dx x) = 0
\end{equation}
holds for every function $f$ in the domain of the operator $\cA^{\widetilde{V}}$ (see e.g.~\cite[Thm.~3.37]{Liggett2010}; although the proof is given for Feller processes only, one can see from the argument given that this must hold true also for invariant probability measures of general Markov processes). In view of Theorem~\ref{thm-gen1}, this is in particular satisfied for $f\in C^2_{0,pl}(\RR)$. We also note the following special case as a key tool for Section~\ref{s5}.

\begin{corollary}\label{cor-testfunctions}
The space $C_c^{\infty}(\RR)$ is a subset of $\smash{D(\cA^{\widetilde{V}})}$ and~\eqref{eq-invmeasure} holds for every test function $f$.
\end{corollary}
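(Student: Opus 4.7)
The plan is to observe that the corollary is almost immediate from Theorem~\ref{thm-gen1} combined with the invariant-measure identity, once we check the elementary inclusion $C_c^\infty(\RR) \subset C_{0,pl}^2(\RR)$.

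First, I would verify the inclusion $C_c^\infty(\RR) \subset C_{0,pl}^2(\RR)$. Let $f \in C_c^\infty(\RR)$. Since $f$ is smooth, it is in particular twice continuously differentiable. Because $f$ has compact support, the same is true of $f'$ and $f''$, so $f \in C^2_0(\RR)$ trivially. Moreover, choosing $R>0$ with $\supp(f) \subset [-R,R]$, we have $f'(x) = f''(x) = 0$ for all $|x| > R$, and hence $|xf'(x)| + |x^2 f''(x)| = 0$ for $|x|>R$. In particular, this expression tends to $0$ as $|x|\to\infty$, so the power-law decay condition defining $C^2_{0,pl}(\RR)$ is satisfied.

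Next, applying Theorem~\ref{thm-gen1} yields $C^2_{0,pl}(\RR) \subset D(\cA^{\widetilde V})$, so in particular $C_c^\infty(\RR) \subset D(\cA^{\widetilde V})$, which is the first assertion.

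Finally, to obtain the identity \eqref{eq-invmeasure}, recall that by Proposition~\ref{prop-sdewithkilling}, $\cL(V_{q,\xi,\eta})$ is the (unique) invariant probability measure of the Markov process $\widetilde V$. As remarked after \eqref{eq-invmeasure} (citing~\cite[Thm.~3.37]{Liggett2010} together with the observation that the argument goes through for general Markov processes), this implies that $\int_\RR \cA^{\widetilde V} f(x)\,\cL(V_{q,\xi,\eta})(\di x) = 0$ for every $f \in D(\cA^{\widetilde V})$. Since $C_c^\infty(\RR) \subset D(\cA^{\widetilde V})$, the equation holds in particular for every test function, completing the proof. There is essentially no obstacle here beyond unpacking the definition of $C^2_{0,pl}(\RR)$ and invoking the already-established results.
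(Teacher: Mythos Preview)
Your proposal is correct and follows exactly the reasoning the paper has in mind: the corollary is stated without proof in the paper, being an immediate consequence of the inclusion $C_c^\infty(\RR)\subset C_{0,pl}^2(\RR)$ together with Theorem~\ref{thm-gen1} and the invariant-measure identity discussed just before the statement. Your write-up simply makes these implicit steps explicit.
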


\section{Distributional Equations Derived by Fourier and Laplace Methods}\label{s4}
In this section, we use the infinitesimal generator obtained in Theorem~\ref{thm-gen1} to derive distributional equations for the law of the killed exponential functional, as well as a functional equation to describe its density, using the method developed in~\cite{BehmeLindner2015} for the case without killing. Throughout the analysis, we set $\cL(V_{q,\xi,\eta})=\mu$ and denote its characteristic function by $\phi_{q,\xi,\eta}$. The following conclusion now follows in complete analogy to Theorem~4.1 and Corollary~4.3 in \cite{BehmeLindner2015}, using Lemma~4.2 of \cite{BehmeLindner2015}. For convenience, the following corollary is given in the characteristics of the original L\'{e}vy process $\xi$, as well as in the characteristics of $\smash{\widetilde{U}}$ with $\nu_{\widetilde{U}}(\{-1\})=q$.

\begin{corollary} \label{cor-cf1}
	Let $q\geq0$ and assume that the exponential functional converges a.s. whenever $q=0$ is considered. Further, let $h\in C_c^\infty(\bR)$ such that $h(x)=1$ for~${|x|\leq 1}$ and~$h(x)=0$ for~$|x|\geq 2$. Set $h_n(x) := h(x/n)$ and ${f_{u,n}(x) = \re^{iux} h_n(x)}$ for $u\in \bR$, $n\in \bN$, and $x\in \bR$. Then
		\begin{align}
		\psi_\eta(u) \phi_{V_{q,\xi,\eta}}(u)
		& =  q (\phi_{V_{q,\xi,\eta}}(u) -1)\nonumber \\
		& \quad + \lim_{n\to\infty} \left( \gamma_\xi \int_{\bR} x f_{u,n}'(x) \, \mu(\dx x)
		- \frac{\sigma_\xi^2}{2} \int_{\bR} (x^2 f_{u,n}''(x) + x f_{u,n}'(x)) \, \mu(\dx x) \right.\nonumber \\
		&  \qquad - \left. \int_{\bR} \int_{\bR} (f_{u,n}(x \re^{-y}) - f_{u,n}(x) + xy f_{u,n}'(x) \I_{\{|y|\leq 1\}}) \, \nu_\xi(\dx y) \, \mu(\dx x) \right)\nonumber \\
			&= - \lim_{n\to\infty} \left( \gamma_{\widetilde{U}} \int_{\bR} x f_{u,n}'(x) \, \mu(\dx x)
			+ \frac{\sigma_{\widetilde{U}}^2}{2} \int_{\bR} x^2 f_{u,n}''(x) \, \mu(\dx x) \right. \label{eq-limitchar}\\
			&  \qquad + \left. \int_{\bR} \int_{[-1,\infty)} (f_{u,n}(x+xy) - f_{u,n}(x) - xy f_{u,n}'(x) \I_{\{|y|\leq 1\}}) \, \nu_{\widetilde{U}}(\dx y) \, \mu(\dx x) \right)\nonumber
	\end{align}
	 for all $u\in\RR$. If additionally $\smash{\EE V_{q,\xi,\eta}^2 < \infty}$, then, for all $u\in\RR$,
	\begin{align}
	\psi_\eta(u) \phi_{V_{q,\xi,\eta}} (u)
	& =  q (\phi_{V_{q,\xi,\eta}}(u) -1) + \gamma_\xi u
	\phi_{V_{q,\xi,\eta}}'(u) - \frac{\sigma_\xi^2}{2} \left( u^2
	\phi_{V_{q,\xi,\eta}}''(u) + u \phi_{V_{q,\xi,\eta}}'(u)\right) \nonumber \\
	& \quad - \int_{\bR} \left( \phi_{V_{q,\xi,\eta}} (u \re^{-y}) -
	\phi_{V_{q,\xi,\eta}}(u) + u y \phi_{V_{q,\xi,\eta}}'(u) \mathbf{1}_{|y|\leq 1}
	\right) \, \nu_\xi(\dx y)\nonumber\\
	& = -\gamma_{\widetilde{U}} u
	\phi_{V_{q,\xi,\eta}}'(u) - \frac{\sigma_{\widetilde{U}}^2}{2} u^2
	\phi_{V_{q,\xi,\eta}}''(u) \nonumber \\
	& \quad - \int_{[-1,\infty)} \left( \phi_{V_{q,\xi,\eta}} (u +uy) -
	\phi_{V_{q,\xi,\eta}}(u) - u y \phi_{V_{q,\xi,\eta}}'(u) \I_{\{|y|\leq 1\}}
	\right) \, \nu_{\widetilde{U}}(\dx y) \nonumber\\
	&= -\EE\left[e^{iuV_{q,\xi\eta}} \psi_{\widetilde{U}}(uV_{q,\xi\eta})\right]. \label{eq-diffeqchar}
	\end{align}
\end{corollary}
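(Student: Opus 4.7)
The plan is to apply the invariance equation from Corollary~\ref{cor-testfunctions} to the family $f_{u,n}$ and pass to the limit $n\to\infty$. By Theorem~\ref{thm-gen1} we have $\cA^{\widetilde V}f = \cA^V f + q(f(0) - f)$, and since $h_n(0) = h(0) = 1$ the values $f_{u,n}(0) = 1$ for every $n$, so relation \eqref{eq-invmeasure} rewrites as
\begin{equation*}
\int_{\bR} \cA^V f_{u,n}(x)\,\mu(\dx x) \;=\; q\Bigl(\int_{\bR} f_{u,n}\,\dx\mu - 1\Bigr),
\end{equation*}
whose right-hand side converges to $q(\phi_{V_{q,\xi,\eta}}(u) - 1)$ by dominated convergence (the integrand is bounded by $1$ and tends to $\re^{iux}$ pointwise). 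This killing contribution is the only genuinely new term compared with the $q = 0$ situation treated in \cite{BehmeLindner2015}; the remainder of the argument is an adaptation of that computation.

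For the left-hand side I decompose $\cA^V f_{u,n}$ according to \eqref{eq-gen1} into the $\cA^\eta$-part and the drift, diffusion and jump contributions of $\xi$. Lemma~4.2 of \cite{BehmeLindner2015} supplies the limit $\int \cA^\eta f_{u,n}\,\dx\mu \to \psi_\eta(u)\phi_{V_{q,\xi,\eta}}(u)$ as $n\to\infty$; the remaining contributions in general admit no closed form and are left inside the limit. Rearranging yields \eqref{eq-limitchar} in its $\xi$-form. The $\widetilde U$-form is then equivalent by Remark~\ref{remark-alternative-forms}(ii): the atom $\nu_{\widetilde U}(\{-1\}) = q$, once included in the truncated jump integral over $[-1,\infty)$, absorbs precisely the extra summand $q(\phi_{V_{q,\xi,\eta}}(u) - 1)$.

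Under $\EE V_{q,\xi,\eta}^2 < \infty$, both $\int |x|\,\dx\mu$ and $\int x^2\,\dx\mu$ are finite, so $\phi_{V_{q,\xi,\eta}} \in C^2(\bR)$ with $\phi_{V_{q,\xi,\eta}}'(u) = \ri\!\int x\re^{\ri ux}\mu(\dx x)$ and $\phi_{V_{q,\xi,\eta}}''(u) = -\!\int x^2\re^{\ri ux}\mu(\dx x)$. Since the derivatives of $h_n$ are $O(1/n)$ and supported in $\{n \leq |x| \leq 2n\}$, we have $f'_{u,n} \to \ri u\re^{\ri u\cdot}$ and $f''_{u,n} \to -u^2\re^{\ri u\cdot}$ pointwise with dominations by constant multiples of $1 + |x|$ and $1 + x^2$ respectively, so dominated convergence passes the $n$-limit inside each $\mu$-integral and identifies the drift and diffusion terms with derivatives of $\phi_{V_{q,\xi,\eta}}$. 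A Fubini exchange on the double integral, valid because the compensator $\I_{\{|y|\leq 1\}}$ makes the integrand quadratic in $y$ near the origin, produces the single integral in \eqref{eq-diffeqchar}. The concluding identity is pure bookkeeping: pulling $\int\!\cdots\nu_{\widetilde U}(\dx y)$ inside the $\mu$-integral in the $\widetilde U$-version of \eqref{eq-diffeqchar} recognizes the right-hand side as $-\!\int \psi_{\widetilde U}(ux)\re^{\ri ux}\mu(\dx x) = -\EE[\re^{\ri uV_{q,\xi,\eta}}\psi_{\widetilde U}(uV_{q,\xi,\eta})]$ via the L\'evy--Khintchine representation of $\psi_{\widetilde U}$. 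The one real technical hurdle is the $n$-uniform control of the compensated jump integral near $y = 0$, which is precisely what Lemma~4.2 of \cite{BehmeLindner2015} delivers; once it is invoked the proof reduces to careful bookkeeping of the killing term.
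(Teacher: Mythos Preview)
Your proposal is correct and follows essentially the same approach as the paper: the paper's entire proof is the one-line remark that the result ``follows in complete analogy to Theorem~4.1 and Corollary~4.3 in \cite{BehmeLindner2015}, using Lemma~4.2 of \cite{BehmeLindner2015}'', and what you have written is precisely a sketch of that analogy, with the killing term $q(f_{u,n}(0)-f_{u,n})$ isolated via Theorem~\ref{thm-gen1} and handled separately by dominated convergence. Your observation that the $\widetilde U$-form follows from Remark~\ref{remark-alternative-forms}(ii) by absorbing the atom at $-1$ is exactly the mechanism the paper has in mind.
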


\begin{remark}\rm
Observe that the integral with respect to $\nu_{\widetilde{U}}$ does not vanish even if $\xi$ (and hence $U$) is a Brownian motion with drift due to the added point mass at $-1$.
\end{remark}

Equation \eqref{eq-diffeqchar} can be solved in special cases, some of which are discussed in Section~\ref{s7}. Note that it has been shown in \cite[Thm. 3.1]{Behme2011}, that the precondition $\EE V_{q,\xi,\eta}^2<\infty$ is fulfilled~if
\begin{equation}\label{eq-condsecondmoment}
\EE[U_1^2]<\infty,\quad \EE[\eta_1^2]<\infty, \quad 2\EE[U_1]+\mathrm{Var}(U_1)<q,
\end{equation}
and $\lim_{t\to\infty} \cE(\smash{\widetilde{U}})_t=0$ a.s., the latter obviously being satisfied whenever $q>0$. If $\eta$ is a subordinator, an equation similar to~\eqref{eq-diffeqchar} also holds for the Laplace transforms without any moment condition. Let $\LL_Y(u)$ denote the Laplace transform of the law of a random variable $Y$, e.g. $\LL_{V_{q,\xi,\eta}} (u)=\EE[\re^{-uV_{q,\xi,\eta}}]$, $u\geq0$. Similar to Remark 4.5 in \cite{BehmeLindner2015} we obtain
\begin{align*}
 \big(\ln\LL_{\eta_1}(u)\big) \LL_{V_{q,\xi,\eta}} (u)
= & q (\LL_{V_{q,\xi,\eta}}(u)-1)  - \gamma_\xi u
\EE[V_{q,\xi,\eta}\re^{-uV_{q,\xi,\eta}}]\\ & - \frac{\sigma_\xi^2}{2} \left( u^2
\EE[V_{q,\xi,\eta}^2 \re^{-uV_{q,\xi,\eta}}] - u \EE[V_{q,\xi,\eta}\re^{-uV_{q,\xi,\eta}}] \right) \nonumber \\
& - \int_{\bR} \left( \LL_{V_{q,\xi,\eta}} (u \re^{-y}) -
\LL_{V_{q,\xi,\eta}}(u) - u y \EE[V_{q,\xi,\eta}\re^{-uV_{q,\xi,\eta}}] \mathbf{1}_{|y|\leq 1}
\right) \, \nu_{\xi}(\dx y), \nonumber
\end{align*}
for $u>0$, rearranging which yields
\begin{align}
\frac{\ln \LL_{\eta_1}(u)}{u} \LL_{V_{q,\xi,\eta}} (u)
= & q \frac{\LL_{V_{q,\xi,\eta}}(u)-1}{u}  + \Big(\gamma_\xi-  \frac{\sigma_\xi^2}{2}\Big)
\LL'_{V_{q,\xi,\eta}}(u) - \frac{\sigma_\xi^2}{2}  u\LL''_{V_{q,\xi,\eta}}(u)\nonumber \\
& - \int_{\bR} \left( \frac{\LL_{V_{q,\xi,\eta}} (u \re^{-y})}{u} -
\frac{\LL_{V_{q,\xi,\eta}}(u)}{u} +  y  \LL'_{V_{q,\xi,\eta}}(u) \mathbf{1}_{|y|\leq 1}
\right) \, \nu_{\xi}(\dx y).\label{eq-laplace}
\end{align}
Restricting the jump part of $\xi$ to be of finite variation,~\eqref{eq-laplace} reduces to
\begin{align*}
\frac{\ln \LL_{\eta_1}(u)}{u} \LL_{V_{q,\xi,\eta}} (u)
= & q \frac{\LL_{V_{q,\xi,\eta}}(u)-1}{u}  + \Big(\gamma^0_\xi-  \frac{\sigma_\xi^2}{2}\Big)
\LL'_{V_{q,\xi,\eta}}(u) -\frac{\sigma_\xi^2}{2}  u\LL''_{V_{q,\xi,\eta}}(u) \\
& - \int_{\bR} \left( \frac{\LL_{V_{q,\xi,\eta}} (u \re^{-y})}{u} -
\frac{\LL_{V_{q,\xi,\eta}}(u)}{u}
\right) \, \nu_{\xi}(\dx y)
\end{align*}
and we can derive a functional equation for the density of $V_{q,\xi,\eta}$ in the absolutely continuous case by Laplace inversion. The proof is in complete analogy to the proof for the case $q=0$ given in Theorem 2.1 in \cite{Behme2015} and hence omitted. For $q\geq0$ we obtain the following result.

\begin{proposition} \label{prop-dens-form1}
	Assume that the jump part of $\xi$ is of finite variation and $\eta$ is a subordinator, i.e. $\ln \LL_{\eta_1}(u)=-\gamma_\eta^0 u - \smash{\int_{(0,\infty)} (1-\re^{-uy})\nu_\eta (\dx y)}$ for $u\geq0$. Further
	assume that~$\cL(V_{q,\xi,\eta})=\mu$ is absolutely continuous with density $f_{\mu}$ and, whenever $\sigma_{\xi}^2\neq0$, the function~$z\mapsto z^2f_{\mu}(z)$ is absolutely continuous on $[0,z]$ for all $z>0$. Then $f_{\mu}(z)$ fulfills  for~$\lambda$-a.e. $z>0$
	\begin{align}
	&\gamma_\eta^0f_{\mu}(z)-\Big(\gamma_{\xi}^0+\frac{\sigma_{\xi}^2}{2}\Big)zf_{\mu}(z)-\frac{\sigma_{\xi}^2}{2}z^2f_{\mu}'(z)-q\int_z^{\infty} f_{\mu}(s) \dx s \label{eq-densitylaplace}\\
&\quad\quad=  \int_z^\infty \nu_\xi((\ln \tfrac{s}{z}, \infty)) f_{\mu}(s) \dx s-\int_0^z \big(\nu_\xi((-\infty, \ln \tfrac{s}{z}))+ \nu_\eta((z-s,\infty))\big) f_{\mu}(s) \dx s.\nonumber
	\end{align}
\end{proposition}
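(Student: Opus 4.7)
The plan is to apply Laplace inversion to the displayed Laplace transform identity for $\LL_{V_{q,\xi,\eta}}$ immediately preceding the statement: under the stated hypotheses each term on both sides will be recognized as the Laplace transform of an explicit function of $z\geq 0$, so that uniqueness of the Laplace transform on $(0,\infty)$ forces equality of the preimages for $\lambda$-a.e.\ $z>0$. This is exactly the strategy of \cite[Thm.~2.1]{Behme2015} for the case $q=0$; the new contribution is carrying the killing term $q(\LL_{V_{q,\xi,\eta}}(u)-1)/u$ through the inversion, and nothing else in the argument has to change in principle.

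First I would deal with the drift and derivative terms. Since $\eta$ is a subordinator,
$$\frac{\ln\LL_{\eta_1}(u)}{u} = -\gamma_\eta^0 - \int_{(0,\infty)}\frac{1-\re^{-uy}}{u}\,\nu_\eta(\dx y),$$
and because $(1-\re^{-uy})/u = \LL[\one_{(0,y)}](u)$, the convolution theorem combined with Fubini identifies the left-hand side of the Laplace equation as the Laplace transform of $-\gamma_\eta^0 f_{\mu}(z) - \int_0^z \nu_\eta((z-s,\infty))f_{\mu}(s)\,\dx s$. On the right, the killing term is the Laplace transform of $-q\int_z^\infty f_\mu(s)\,\dx s$, and $\LL'_{V_{q,\xi,\eta}}(u) = -\LL[zf_{\mu}(z)](u)$. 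The delicate term is $u\LL''_{V_{q,\xi,\eta}}(u)$: applying $\LL[g'](u) = u\LL[g](u) - g(0)$ to $g(z)=z^2 f_\mu(z)$ requires precisely the assumed absolute continuity of $z\mapsto z^2 f_\mu(z)$ on every $[0,z]$, and yields $u\LL''_{V_{q,\xi,\eta}}(u) = \LL[2zf_{\mu}(z) + z^2 f_{\mu}'(z)](u)$. Combining the $(\gamma_\xi^0 - \sigma_\xi^2/2)\LL'_{V_{q,\xi,\eta}}$ and $-(\sigma_\xi^2/2)u\LL''_{V_{q,\xi,\eta}}$ contributions produces the Laplace transform of $-(\gamma_\xi^0+\sigma_\xi^2/2)zf_{\mu}(z) - (\sigma_\xi^2/2)z^2 f_{\mu}'(z)$.

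The real work is inverting the $\nu_\xi$-integral. Using the time-scaling identity $\LL_{V_{q,\xi,\eta}}(u\re^{-y})/u = \LL[\int_0^{z\re^y} f_{\mu}(t)\,\dx t](u)$, I obtain
$$\frac{\LL_{V_{q,\xi,\eta}}(u\re^{-y}) - \LL_{V_{q,\xi,\eta}}(u)}{u} = \LL\!\left[\int_z^{z\re^y} f_{\mu}(t)\,\dx t\right]\!(u),$$
with the convention $\int_a^b = -\int_b^a$ when $y<0$. Splitting the outer integral against $\nu_\xi$ into its positive and negative parts and invoking Fubini to swap the order of $\nu_\xi(\dx y)$ and $\dx t$ gives the preimage
$$\int_z^\infty \nu_\xi((\ln(s/z),\infty))f_{\mu}(s)\,\dx s - \int_0^z \nu_\xi((-\infty,\ln(s/z)))f_{\mu}(s)\,\dx s,$$
where endpoints may be opened at no cost because $\nu_\xi$ has at most countably many atoms, contributing a $\lambda$-null exceptional set. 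Equating the inverse transforms obtained for the two sides of the Laplace identity and rearranging then yields~\eqref{eq-densitylaplace}.

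I expect the principal obstacle to be the justification of the Fubini interchange in the $\nu_\xi$-integral near $y=0$, where the inner expression $\int_z^{z\re^y} f_{\mu}(t)\,\dx t$ behaves like $z|y|f_{\mu}(z)$; here the finite-variation hypothesis $\int(|y|\wedge 1)\,\nu_\xi(\dx y)<\infty$ on the jump part of $\xi$ is precisely what makes the double integral absolutely convergent and legitimizes the swap. A secondary technical point is that the absolute continuity assumption on $z\mapsto z^2 f_{\mu}(z)$ is genuinely necessary when $\sigma_\xi^2\neq 0$, since it is exactly this hypothesis that permits $u\LL''_{V_{q,\xi,\eta}}(u)$ to be represented as the Laplace transform of an ordinary function rather than of a more general Schwartz distribution.
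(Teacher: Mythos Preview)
Your proposal is correct and follows precisely the route the paper indicates: the paper explicitly states that the proof is in complete analogy to \cite[Thm.~2.1]{Behme2015} via Laplace inversion and is therefore omitted, with the only novelty being the killing term $q(\LL_{V_{q,\xi,\eta}}(u)-1)/u$, which you handle correctly. Your identification of each term's Laplace preimage, the role of the absolute-continuity hypothesis on $z\mapsto z^2 f_\mu(z)$ when $\sigma_\xi^2\neq 0$, and the use of the finite-variation assumption to justify the Fubini swap are all exactly as in the intended argument.
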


Various sufficient conditions for absolute continuity of $\mu$ are given in Theorems~6.18 and~6.14 of~\cite{BLRRPart1}. Nevertheless, there are cases where Proposition~\ref{prop-dens-form1} is not applicable, e.g. if $\eta$ is not a subordinator, if $\smash{\int_{-1}^1|x|\nu_{\xi}(\dx x)}=\infty$, or if $\mu$ is not absolutely continuous. In the next section, we derive a general equation for the law of $V_{q,\xi,\eta}$ without a priori assumptions from which Proposition~\ref{prop-dens-form1} is reobtained as a special case (see Remark~\ref{rem-literature}). The proof given in this section, however, is comparably shorter and less technical.
\begin{remark}
\rm Observe that we obtain the functional equation given in~(2.3) of~\cite{PardoRiverovanSchaik} in the special case of $\eta_t=t$ and $\xi$ being a subordinator, as $V_{q,\xi,t}$ is always absolutely continuous by~\cite{PatieSavov}.
\end{remark}

\section{Distributional Equations Derived by Schwartz Theory of Distributions}\label{s5}

In this section, we give distributional equations for the law of the killed exponential functional using Schwartz theory of distributions, where we follow a similar approach as used in~\cite[Thm.~2.2]{KuznetsovPardoSavov2012} for the exponential functional without killing. While studying the method, we found a small oversight in the proof of said theorem which results in the distributional equation not being applicable in all claimed cases. This is discussed in Remark~\ref{validity-analysis}. However, we also found that the method works when killing is included and that the moment condition~$\EE|\xi_1|,\EE|\eta_1|<\infty$ of~\cite{KuznetsovPardoSavov2012} is not needed to arrive at the desired conclusion in both cases. Compared to Section~\ref{s4}, we now rely more on technical auxiliary results. As a consequence, many a priori assumptions needed in the previous section can be dropped. The main result of this section is Theorem~\ref{main}, which establishes a connection between the characteristic triplets of the processes $\eta$ and $\smash{\widetilde{U}}$, and the law of the corresponding killed exponential functional~$V_{q,\xi,\eta}$. From this, we directly obtain a functional equation for the density in the absolutely continuous case as well as, similar to~\cite[Cor.~2.3]{KuznetsovPardoSavov2012}, a criterion for absolute continuity and continuity or smoothness of the density that extends the one given in Corollary~6.15 of~\cite{BLRRPart1} for the exponential functional without killing to the case~$q>0$. Further, we discuss different special cases. Recall that the process $U$ is constructed from~$\xi$ via $\re^{-\xi}=\cE(U)$ and that $\smash{\widetilde{U}}$ is obtained from adding a point mass of~${q>0}$ at $-1$ to the L\'{e}vy measure of~$U$. To alleviate some of the notation, we characterise the functions involved in Theorem~\ref{main} in the following lemma.

\begin{lemma}\label{lem-functions}
Let $\xi,\eta$ be two independent L\'{e}vy processes such that $\eta$ is not the zero process and $q\geq0$. Further, define the functions $B_{\eta},B_{\widetilde{U}},S_{\eta},S_{\widetilde{U}}$ by
\begin{align}
B_{\eta}:\RR\rightarrow\RR,\quad B_{\eta}(z)&=\begin{cases}-\nu_{\eta}(-\infty,\min\{z,-1\}),\ &\text{if}\ z<0,\\ 0,&\text{if}\ z=0,\\
\nu_{\eta}((\max\{z,1\},\infty)),\ &\text{if}\ z>0,\end{cases}\label{def-B-eta}\\
B_{\widetilde{U}}:[1,\infty)\rightarrow[0,\infty),\quad B_{\widetilde{U}}(z)&=\begin{cases}0,&\text{if}\ z=1,\\ \nu_{\widetilde{U}}((\max\{z-1,1\},\infty)),&\text{if}\ z>1,\end{cases}\label{def-B-U}\\
S_{\eta}:\RR\rightarrow[0,\infty),\quad S_{\eta}(z)&=\begin{cases}\int_{-\infty}^z(z-y)\nu_{\eta}|_{[-1,1]}(\dx y),\ &\text{if}\ z<0,\\0, &\text{if}\ z=0,\\ \int_z^{\infty}(y-z)\nu_{\eta}|_{[-1,1]}(\dx y), &\text{if}\ z>0,\\ \end{cases}\label{def-S-eta}\\
S_{\widetilde{U}}:[0,\infty)\rightarrow[0,\infty),\quad S_{\widetilde{U}}(z)&=\begin{cases} \int_{-\infty}^{z-1}(z-1-y)\nu_{\widetilde{U}}\big|_{[-1,1]}(\dx y),\ &\text{if}\ z\in[0,1),\\ 0,&\text{if}\ z=1,\\ \int_{z-1}^{\infty}(y-z+1)\nu_{\widetilde{U}}\big|_{[-1,1]}(\dx y),\ &\text{if}\ z>1.\end{cases}\label{def-S-U}
\end{align}
Then both $B_{\widetilde{U}}$ and $B_{\eta}$ are bounded and hence locally integrable with respect to $\lambda$ and both~$S_{\eta}$ and $z\mapsto S_{\widetilde{U}}(z+1)$, $z\in\RR$ are integrable with respect to $\lambda$. In particular, the convolution $B_{\eta}\ast\mu$ is defined everywhere and bounded and the convolution $S_{\eta}\ast\mu$ is defined everywhere, is $\lambda$-a.e. finite and integrable. Further, the functions $z\mapsto\int_{0+}^z(B_{\eta}\ast\mu)(x)\dx x$ and~${z\mapsto\int_{0+}^z\int_{0+}^tB_{\widetilde{U}}(\tfrac{t}{x})\mu(\dx x)\dx t}$ are locally integrable with respect to $\lambda$.
\end{lemma}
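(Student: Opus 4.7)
The plan is to dispatch each of the five assertions by routine measure-theoretic estimates, using two standard facts throughout: any L\'{e}vy measure $\nu$ on $\RR$ satisfies $\nu(\{|y|>\varepsilon\})<\infty$ for every $\varepsilon>0$ and $\int_{[-1,1]}y^2\,\nu(\di y)<\infty$, and $\mu=\law(V_{q,\xi,\eta})$ is a probability measure. First I would verify boundedness of $B_\eta$ and $B_{\widetilde{U}}$ directly from their definitions: the truncation by $\max$ or $\min$ with $\pm 1$ inside the tail functionals forces $|B_\eta(z)|\leq\nu_\eta(\{|y|>1\})$ for every $z\in\RR$ and $B_{\widetilde{U}}(z)\leq\nu_{\widetilde{U}}((1,\infty))=\nu_U((1,\infty))$ for every $z\geq 1$, the atom of $\nu_{\widetilde{U}}$ at $-1$ being irrelevant for the right tail.

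Next I would treat the $L^1$-integrability of $S_\eta$ via Fubini. On the positive half-line,
\[
\int_0^\infty S_\eta(z)\,\di z = \int_{[0,1]}\int_0^y (y-z)\,\di z\,\nu_\eta(\di y) = \tfrac{1}{2}\int_{[0,1]} y^2\,\nu_\eta(\di y),
\]
and the symmetric computation on $(-\infty,0)$ yields $\|S_\eta\|_1 = \tfrac{1}{2}\int_{[-1,1]} y^2\,\nu_\eta(\di y) < \infty$. For $z\mapsto S_{\widetilde{U}}(z+1)$ the substitution $w=z+1$ reduces the claim to integrability of $S_{\widetilde{U}}$ on $[0,\infty)$, where the analogous computation gives $\tfrac{1}{2}\int_{[-1,1]}y^2\,\nu_{\widetilde{U}}(\di y) = \tfrac{1}{2}\bigl(\int_{[-1,1]}y^2\,\nu_U(\di y)+q\bigr) < \infty$.

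The convolution statements then fall out for free: since $\mu(\RR)=1$, boundedness of $B_\eta$ gives $\|B_\eta\ast\mu\|_\infty\leq\|B_\eta\|_\infty$, so $B_\eta\ast\mu$ is defined everywhere and bounded; Fubini applied to the non-negative $S_\eta$ gives $\int_\RR(S_\eta\ast\mu)(x)\,\di x = \|S_\eta\|_1<\infty$, whence $S_\eta\ast\mu$ is $\lambda$-a.e. finite and integrable. Finally, $z\mapsto\int_{0+}^z(B_\eta\ast\mu)(x)\,\di x$ is Lipschitz by boundedness of $B_\eta\ast\mu$, hence locally integrable; for the second antiderivative, the convention of $\int_{0+}^t$ forces $t/x\in[1,\infty)$ whenever $x$ lies in the domain of integration, so that $B_{\widetilde{U}}(t/x)$ is well-defined and the inner integral is uniformly bounded by $\|B_{\widetilde{U}}\|_\infty$, making the outer antiderivative Lipschitz in $z$ and thereby locally integrable. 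The only point that needs any care is bookkeeping: the signed versus unsigned definition of $B_\eta$ on the two half-lines, the convention $\int_{0+}^z=-\int_{z+}^0$ for $z<0$ when extending the antiderivatives to negative arguments, and keeping the contribution of the atom at $-1$ of $\nu_{\widetilde{U}}$ (which enters $S_{\widetilde{U}}$ but not $B_{\widetilde{U}}$) straight; none of these presents a genuine mathematical difficulty.
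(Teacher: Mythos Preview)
Your proof is correct and follows essentially the same approach as the paper: bound $B_\eta,B_{\widetilde{U}}$ by the finite tail masses $\nu_\eta(\RR\setminus[-1,1])$ and $\nu_{\widetilde{U}}((1,\infty))$, establish $\lambda$-integrability of $S_\eta$ and $S_{\widetilde{U}}(\cdot+1)$ via Fubini and the second-moment property $\int_{[-1,1]}y^2\,\nu(\di y)<\infty$, and deduce the remaining convolution and antiderivative statements from these by standard facts about convolutions with a probability measure. You are in fact slightly more explicit than the paper on the last two assertions, where the paper simply invokes ``standard results on the convolution of bounded or measurable functions and finite measures''; your Lipschitz argument for local integrability of the antiderivatives is a clean way to spell this out.
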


\begin{proof}
First, note that $|B_{\eta}(z)|\leq\nu_{\eta}(\RR\setminus[-1,1])<\infty$ and $|B_{\widetilde{U}}(z)|\leq\nu_{\widetilde{U}}([1,\infty))<\infty$ implies that $B_{\eta}$ and $B_{\widetilde{U}}$ are bounded, respectively. For $S_{\eta}$, an application of Fubini's theorem yields for $z>0$ that
\begin{align*}
\int_{0+}^{\infty}|S_{\eta}(t)|\dx t&\leq\int_{0+}^{\infty}\int_z^{\infty}|y-z|\nu_{\eta}\big|_{[-1,1]}(\dx y)\dx z=\int_{0+}^{\infty}\int_{0+}^y|y-z|\dx z\nu_{\eta}|_{[-1,1]}(\dx y)\\
&=\int_{0+}^{\infty}\frac{y^2}{2}\nu_{\eta}|_{[-1,1]}(\dx y)<\infty,
\end{align*}
and similarly for $z<0$, showing that $S_{\eta}$ is indeed integrable. The same argument applies to $z\mapsto S_{\widetilde{U}}(z+1)$. The remaining assertions now follow from standard results on the convolution of bounded or measurable functions and finite measures.
\end{proof}

The term involving $S_{\widetilde{U}}$ in the distributional equation~\eqref{eq-mu} below is considered in the following lemma.

\begin{lemma}\label{lem-functions2}
Let $q\geq0$ and $S_{\widetilde{U}}$ as defined in~\eqref{def-S-U}. Then
\begin{align*}
\rho(\dx z)&=\Big(\I_{\{z>0\}}\int_0^{\infty}xS_{\widetilde{U}}(\tfrac{z}{x})\mu(\dx x)+\I_{\{z<0\}}\int_{-\infty}^0|x|S_{\widetilde{U}}(\tfrac{z}{x})\mu(\dx x)\Big)\dx z,
\end{align*}
defines a locally finite measure on~$\cB(\RR)$.
\end{lemma}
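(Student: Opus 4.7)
The plan is to establish (a) non-negativity (so that $\rho$ is a measure) and (b) local finiteness, i.e.\ $\rho(K) < \infty$ for every bounded Borel set $K \subset \RR$. Non-negativity is immediate from~\eqref{def-S-U}: $S_{\widetilde{U}}$ is defined as an integral of non-negative integrands against $\nu_{\widetilde{U}}|_{[-1,1]}$ and hence takes values in $[0,\infty)$; combined with the non-negative factors $x$ on $(0,\infty)$ and $|x|$ on $(-\infty,0)$, the density is non-negative. Measurability of $z \mapsto \int_0^\infty xS_{\widetilde{U}}(z/x)\mu(\dx x)$ follows from Tonelli's theorem, so $\rho$ is indeed a well-defined positive measure on $\cB(\RR)$.

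For local finiteness, by the symmetric structure of the two half-lines in the definition, it suffices to bound $\rho([0, M])$ for arbitrary $M > 0$. Applying Tonelli and substituting $u = z/x$ yields
\[
\rho([0, M]) = \int_{(0, \infty)} x^2 J(M/x)\, \mu(\dx x), \qquad J(y) := \int_0^y S_{\widetilde{U}}(u)\dx u.
\]
Two structural facts then drive the argument. First, a direct case analysis on~\eqref{def-S-U} gives $\supp(S_{\widetilde{U}}) \subseteq [0,2]$, since for $z > 2$ the set $[z-1,\infty) \cap [-1,1]$ is empty; together with Lemma~\ref{lem-functions} this yields $J(y) = L_{\widetilde{U}} := \int_0^2 S_{\widetilde{U}}(u)\dx u < \infty$ for $y \geq 2$. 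Second, for $u \in [0, 1/2]$ the pointwise bound $S_{\widetilde{U}}(u) \leq u\,\nu_{\widetilde{U}}([-1, u-1]) \leq K u$ holds, where $K := q + \nu_U((-1, -1/2])$ is finite because $(-1, -1/2]$ is bounded away from the origin; hence $J(y) \leq K y^2/2$ for $y \in [0, 1/2]$.

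With these estimates in hand I split $(0, \infty)$ into $(0, M/2]$, $(M/2, 2M]$, and $(2M, \infty)$ and bound each contribution separately: on $(0, M/2]$ one has $J(M/x) = L_{\widetilde{U}}$ and $x^2 \leq M^2/4$; on $(M/2, 2M]$ the factor $x^2 \leq 4M^2$ is bounded while $J \leq L_{\widetilde{U}}$; on $(2M, \infty)$ the small-$u$ bound applies since $M/x \leq 1/2$, giving $x^2 J(M/x) \leq KM^2/2$. Using that $\mu$ is a probability measure, each piece is finite, so $\rho([0, M]) < \infty$, and the analogous argument gives $\rho([-M, 0)) < \infty$.

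The main obstacle is the pointwise estimate $S_{\widetilde{U}}(u) \leq K u$ for small $u$, which is essential to absorb the factor $x^2$ appearing for large $x$. The estimate exploits that although $\nu_U$ may have infinite total mass near $0$, the relevant integration region $[-1, u-1]$ for $u \in [0,1/2]$ lies in $[-1, -1/2]$, a set bounded away from $0$ on which any L\'evy measure is finite.
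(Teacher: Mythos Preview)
Your proof is correct. Both your argument and the paper's ultimately rest on the same two structural facts about $\nu_{\widetilde{U}}$---finiteness on sets bounded away from the origin and the square-integrability $\int_{[-1,1]} y^2\,\nu_{\widetilde{U}}(\dx y)<\infty$---but the organisation differs. The paper works directly with the triple integral $\int_0^R\int_0^\infty xS_{\widetilde{U}}(z/x)\,\mu(\dx x)\,\dx z$, expands $S_{\widetilde{U}}$ via~\eqref{def-S-U}, splits according to whether $z/x>1$ or $z/x<1$, and in the latter case further splits the $\nu_{\widetilde{U}}$-integral at $y=-1/2$, bounding each piece in the variables $(y,x,z)$. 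You instead perform the substitution $u=z/x$ first, reducing the problem to controlling $\int_{(0,\infty)} x^2 J(M/x)\,\mu(\dx x)$, and package the needed estimates as properties of the single function $J$: bounded by $L_{\widetilde{U}}$ (via Lemma~\ref{lem-functions} and the compact support of $S_{\widetilde{U}}$), and $O(y^2)$ near zero via the pointwise linear bound on $S_{\widetilde{U}}$. Your route is more modular---the key estimates are isolated as properties of $S_{\widetilde{U}}$ and $J$ before the $\mu$-integration---while the paper's direct manipulation in the original variables makes the role of $\int y^2\,\nu_{\widetilde{U}}(\dx y)$ more explicit. Both yield the conclusion with comparable effort.
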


\begin{proof}
Let $B\subset\RR$ be compact. We first consider $B\in[0,\infty)$, i.e. $B\subseteq[0,R]$ for sufficiently large $R\in\RR$. As $S_{\widetilde{U}}$ is nonnegative by definition, we obtain
\begin{displaymath}
\int_B\rho(\dx z)\leq\int_{0+}^R\int_0^{z-}xS_{\widetilde{U}}(\tfrac{z}{x})\mu(\dx x)\dx z+\int_{0+}^R\int_{z+}^{\infty}xS_{\widetilde{U}}(\tfrac{z}{x})\mu(\dx x)\dx z,
\end{displaymath}
in which we can insert the cases given in~\eqref{def-S-U}. Applying Fubini's theorem now yields
\begin{align*}
\int_{0+}^R\int_0^{z-}xS_{\widetilde{U}}(\tfrac{z}{x})\mu(\dx x)\dx z&\leq\int_{0+}^1\int_{0+}^R\int_x^{x+xy}(xy-z+x)\dx z\mu(\dx x)\nu_{\widetilde{U}}|_{[-1,1]}(\dx y)\\
&\leq\frac{R^2}{2}\int_{0+}^1y^2\nu_{\widetilde{U}}(\dx y)<\infty
\end{align*}
for the first term. For the second term, write
\begin{displaymath}
\int_{0+}^R\int_{z+}^{\infty}xS_{\widetilde{U}}(\tfrac{t}{x})\mu(\dx x)\dx z=\int_{-1}^{0-}\int_{0+}^{\infty}\int_{\min\{x(1+y),R\}}^{\min\{x,R\}}(z-x-xy)\dx z\mu(\dx x)\nu_{\widetilde{U}}(\dx y).
\end{displaymath}
Whenever $y$ is bounded away from zero, e.g. considering $\smash{y\in[-1,-1/2]}$, the inner intergral can again be estimated by $\smash{\int_0^Rz\dx z}=R^2/2$, thus yielding finiteness of the triple integral as before. For $y\in(-1/2,0)$, observe that the inner integral vanishes whenever $x>2R$ and if~$x\leq2R$, it can be bounded by $\smash{\int_{x(1+y)}^x(z-x-xy)\dx z}=x^2y^2/2\leq2y^2R^2$. Thus, the triple integral is also finite in the last case. Since the same arguments apply for $B\subset(-\infty,0]$, it follows that $\rho$ is locally finite.
\end{proof}

%
%
The following theorem is the main result of this section. As before, we set $\cL(V_{q,\xi,\eta})=\mu$.
\begin{theorem}\label{main}
Let $\xi,\eta$ be two independent L\'{e}vy processes such that $\eta$ is not the zero process and $q\geq0$ such that $V_{0,\xi,\eta}$ converges a.s. whenever $q=0$ is considered. Further, let the functions~$B_{\eta},B_{\widetilde{U}},S_{\eta},S_{\widetilde{U}}$ be as in Lemma~\ref{lem-functions}. Then there exists a constant $K\in\RR$ such that
\begin{align}
K\dx z&=\Bigl(\frac{1}{2}\sigma_{\eta}^2+\frac{1}{2}z^2\sigma_{\widetilde{U}}^2\Bigr)\mu(\dx z)+(S_{\eta}\ast\mu)(z)\dx z\nonumber\\
&\quad+\Big(\I_{\{z>0\}}\int_0^{\infty}xS_{\widetilde{U}}(\tfrac{z}{x})\mu(\dx x)+\I_{\{z<0\}}\int_{-\infty}^0|x|S_{\widetilde{U}}(\tfrac{z}{x})\mu(\dx x)\Big)\dx z\nonumber\\
&\quad-\int_{0+}^z\Bigl(\gamma_{\eta}+x\gamma_{\widetilde{U}}\Bigr)\mu(\dx x)\dx z-\int_{0+}^z(B_{\eta}\ast\mu)(x)\dx x\dx z\nonumber\\
&\quad-\int_{0+}^z\int_{0+}^tB_{\widetilde{U}}(\tfrac{t}{x})\mu(\dx x)\dx t\dx z.\label{eq-mu}
\end{align}
\end{theorem}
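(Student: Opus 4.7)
The starting point is Corollary~\ref{cor-testfunctions}: for every $f\in C_c^\infty(\RR)$ one has $\int_\RR \cA^{\widetilde V} f\,\di\mu = 0$. Using the form~\eqref{eq-genkilledgou} of $\cA^{\widetilde V}$, I split the integrand into six pieces, namely the Gaussian variance and the drift coming from $\eta$, the same two pieces coming from $\widetilde U$, and the small-jump ($|y|\leq 1$) and large-jump ($|y|>1$) components of each of $\nu_\eta$ and $\nu_{\widetilde U}$. The plan is to manipulate the identity into the shape
\[
0 \;=\; \int_\RR f''(z)\,\Phi(\di z) \qquad\text{for every } f\in C_c^\infty(\RR),
\]
where $\Phi$ is exactly the signed measure on the right-hand side of~\eqref{eq-mu}; a standard distributional argument will then identify $\Phi$ with $K\,\di z$.

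The two Gaussian pieces are already of the desired form $\int f''(z)(\cdots)\mu(\di z)$ and contribute $\tfrac12\sigma_\eta^2\mu$ and $\tfrac12 z^2\sigma_{\widetilde U}^2\mu$ to $\Phi$. For the drift terms I would substitute $f'(x) = f'(0) + \int_{0+}^x f''(t)\,\di t$ (with the oriented convention $\int_{0+}^x = -\int_{x+}^0$ for $x<0$), swap the $\mu$- and $\lambda$-integrations by Fubini, and absorb the $f'(0)$-pieces using $\mu(\RR) = 1$ together with $\int_\RR f''\,\di\lambda = 0$; this produces exactly $-\int_\RR f''(z)\int_{0+}^z(\gamma_\eta + x\gamma_{\widetilde U})\,\mu(\di x)\,\di z$. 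For the small-jump terms I would expand $f(x+y)-f(x)-yf'(x) = \int_0^y (y-s)f''(x+s)\,\di s$ and, for the $\widetilde U$-part, its analogue with $y$ replaced by $xy$; the change of variables $u=x+s$ and Fubini against $\mu\otimes\nu_\eta|_{[-1,1]}$ (respectively against $\mu\otimes\nu_{\widetilde U}|_{[-1,1]}$ with the scaling factor $|x|$) produce precisely the densities $(S_\eta\ast\mu)(z)$ and the $S_{\widetilde U}$-expression of~\eqref{eq-mu}. The large-jump terms are handled analogously: write $f(x+y)-f(x) = \int_0^y f'(x+s)\,\di s$, substitute the $f'(0) + \int_{0+}^{x+s} f''$ decomposition, apply Fubini and a change of variables, and use that $\nu_\eta|_{\{|y|>1\}}$ and $\nu_{\widetilde U}|_{\{|y|>1\}}$ are finite measures so that the $f'(0)$-residues cancel once more via $\int f''\,\di\lambda = 0$; what remains are exactly the $B_\eta$- and $B_{\widetilde U}$-anti-derivative terms of~\eqref{eq-mu}. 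The integrability needed to validate every Fubini step is provided by Lemmas~\ref{lem-functions} and~\ref{lem-functions2}.

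Summing the six rewritten pieces yields $\int_\RR f''(z)\,\Phi(\di z) = 0$ for every $f\in C_c^\infty(\RR)$, so $\Phi'' = 0$ as a distribution on $\RR$ and therefore $\Phi(\di z) = (K + K_1 z)\,\di z$ for constants $K, K_1 \in \RR$. To conclude $K_1 = 0$, I would test $\Phi$ against a compactly supported function with $\int f\,\di\lambda = 0$ and $\int z f(z)\,\di z = 1$ and unfold each summand of $\Phi$, using the canonical anchoring at $z = 0$ of all $\int_{0+}^z$-terms together with a direct computation that exploits the bounds in Lemma~\ref{lem-functions}. The main technical obstacle in the whole argument is the justification of Fubini in the small- and large-jump rewrites without any moment assumption on $\mu$: this is precisely what the boundedness and integrability statements of Lemma~\ref{lem-functions} and the local finiteness of Lemma~\ref{lem-functions2} are designed to supply, the most delicate point being the scaling factor $|x|$ near the killing atom at $-1$ in the $\widetilde U$-jump term. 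A secondary subtlety is the clean cancellation of the $f'(0)$-boundary terms arising from the antiderivative decomposition of $f'$, which must match consistently across all four drift- and large-jump contributions.
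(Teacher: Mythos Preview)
Your overall architecture coincides with the paper's: rewrite $\int\cA^{\widetilde V}f\,\di\mu=0$ as $\int f''\,\di\Phi=0$ for every test function, conclude $\Phi''=0$ distributionally, hence $\Phi=(K+K_1z)\,\di z$, and then argue $K_1=0$. The rewriting of the six generator pieces is also essentially the paper's Lemma~\ref{rewriting1} followed by Lemma~\ref{rewriting2}, although the paper first collects the drift and large-jump pieces as $\int f'\,\di G_2$ and then passes to $f''$ in a single integration by parts against the c\`adl\`ag locally-BV function $z\mapsto\int_{0+}^z G_2(\di w)$. Your route via $f'(x)=f'(0)+\int_{0+}^x f''$ is the same identity, but the bookkeeping of the ``$f'(0)$-residues'' is more delicate than you indicate: for the $x\gamma_{\widetilde U}$ drift piece the residue is $f'(0)\,\gamma_{\widetilde U}\int x\,\mu(\di x)$, and for the large-jump pieces it involves $\int_{|y|>1}y\,\nu_\eta(\di y)$ and $\int_{|y|>1}xy\,\nu_{\widetilde U}(\di y)\,\mu(\di x)$, none of which are assumed finite. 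These do not ``cancel via $\int f''\,\di\lambda=0$''. If you restrict to the support of $f$ first and track all boundary contributions carefully you will recover exactly the paper's IBP formula with no residue, but as written the decomposition step tacitly imposes moment conditions that Theorem~\ref{main} deliberately avoids.

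The genuine gap is the elimination of $K_1$. Testing $\Phi$ against a fixed compactly supported $f$ with $\int f=0$ and $\int zf=1$ only returns the tautology $K_1=\int f\,\di\Phi=K_1$; there is nothing to ``unfold'' that would force this number to vanish, because the explicit form of $\Phi$ depends on $\mu$ and the characteristics in a way that a single local test cannot see. The information that kills the linear term is asymptotic: every summand on the right of~\eqref{eq-mu} is sublinear at infinity in an averaged sense. Concretely the paper shows (Lemma~\ref{constants}) that
\[
\lim_{t\to\infty}\frac{1}{\ln t}\int_1^t\frac{1}{z^2}\,\Phi(\di z)=0,
\]
which evaluated on $(K+K_1z)\,\di z$ gives exactly $K_1$. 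Verifying this limit term by term is the real work of the proof; the most laborious case is the $S_{\widetilde U}$-term, where one must split $\int_0^\infty xS_{\widetilde U}(z/x)\,\mu(\di x)$ according to whether $x$ is of order $z$ or not and control the near-singularity at $z/x=1$ separately. None of this is captured by your local testing proposal, so as it stands the argument does not close.
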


The proof of Theorem~\ref{main} is based on the proof of Theorem~2.2 in~\cite{KuznetsovPardoSavov2012} and the individual steps are carried out in Section~\ref{s6} below. We sketch the argument briefly. First, taking $f\in C_c^{\infty}(\RR)$, the explicit form of~$\mathcal{A}^{\widetilde{V}}f(x)$ is inserted into \eqref{eq-invmeasure}, allowing to rewrite the left-hand side to the form
\begin{displaymath}
\int_{\RR}A^{\widetilde{V}}f(x)\mu(\dx x)=\int_{\RR}f''(z)G_1(\dx z)+\int_{\RR}f'(z)G_2(\dx z).
\end{displaymath}
for suitable $G_1$ and $G_2$. We can then use partial integration to rewrite the above integrals to all include the same function, namely $f''$, yielding the form
\begin{displaymath}
\int_{\RR}f''(z)G_1(\dx z)+\int_{\RR}f'(z)G_2(\dx z)=\int_{\RR}f''(z)G(\dx z),
\end{displaymath}
where $G$ can be identified with a distribution in the sense of Schwartz. Using \eqref{eq-invmeasure} and the definition of the distributional derivative, it follows that this distribution satisfies $G''=0$. By solving this ordinary differential equation (ODE) over the distribution space, one can find an equivalent expression for $G$. Identifying the remaining constants then yields the desired equation.

%
%
Whenever $\mu$ is absolutely continuous with respect to the Lebesgue measure,~\eqref{eq-mu} directly yields a functional equation for the density. Various sufficient conditions for absolute continuity are given in Theorems~6.8 and~6.14 of~\cite{BLRRPart1}. Note in particular that whenever $\mu$ is continuous, the existence of a density is equivalent to the existence of a density of $\mu|_{\RR\setminus\{0\}}$. If $\eta$ is not the zero process, $\mu$ is continuous if and only if $q>0$ and $\eta$ is not a compound Poisson process (cf.~\cite[Cor.~6.11]{BLRRPart1}) or if $q=0$ and $\xi$ and $\eta$ are not simultanously deterministic (cf.~\cite[Thm. 2.2]{BertoinLindnerMaller2008}). In the case that $q>0$ and $\eta$ is a compound Poisson process, it is $\mu(\{0\})>0$ such that the measure cannot be absolutely continuous, however, it is still possible for $\mu|_{\RR\setminus\{0\}}$ to have a density (see Corollary~\ref{existence-density} below). We thus formulate the following result in the slightly more general setting that only $\mu|_{\RR\setminus\{0\}}$ has a density. The proof is immediate from Theorem~\ref{main} and is therefore omitted.

\begin{corollary}\label{corollary-equation-density}
Under the conditions of Theorem~\ref{main}, assume that $\mu|_{\RR\setminus\{0\}}$ has a density~$f_{\mu}$ with respect to the Lebesue measure. Then there exists a constant $K\in\RR$ such that
\begin{align}
&\Bigl(\frac{1}{2}\sigma_{\eta}^2+\frac{1}{2}z^2\sigma_{\widetilde{U}}^2\Bigr)f_{\mu}(z)+(S_{\eta}\ast f_{\mu})(z)+S_{\eta}(z)\mu(\{0\})\nonumber\\
&\quad+\I_{\{z>0\}}\int_0^{\infty}xS_{\widetilde{U}}(\tfrac{z}{x})f_{\mu}(x)\dx x+\I_{\{z<0\}}\int_{-\infty}^0|x|S_{\widetilde{U}}(\tfrac{z}{x})f_{\mu}(x)\dx x\nonumber\\
&=K+\int_0^z\Bigl(\gamma_{\eta}+x\gamma_{\widetilde{U}}\Bigr)f_{\mu}(x)\dx x-\I_{\{z<0\}}\gamma_{\eta}\mu(\{0\})\nonumber\\
&\quad+\int_0^z(B_{\eta}\ast f_{\mu})(x)\dx x+\int_0^z\int_0^tB_{\widetilde{U}}(\tfrac{t}{x})f_{\mu}(x)\dx x\dx t\label{equation-density}
\end{align}
for $\lambda$-a.e. $z\in\RR$.
\end{corollary}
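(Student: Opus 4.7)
The plan is to pass from the measure identity \eqref{eq-mu} of Theorem~\ref{main} directly to the density identity \eqref{equation-density} by decomposing $\mu$ into its absolutely continuous part and its atom at zero. Concretely, under the hypothesis I would write
\[
\mu(\dx x) = f_\mu(x)\,\dx x + \mu(\{0\})\delta_0(\dx x),
\]
substitute this into each term on the right-hand side of \eqref{eq-mu}, and then match $\lambda$-densities on both sides.

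For the quadratic term, the substitution yields $\bigl(\tfrac12\sigma_\eta^2 + \tfrac12 z^2\sigma_{\widetilde U}^2\bigr) f_\mu(z)\,\dx z$ together with a potential atomic piece $\tfrac12\sigma_\eta^2\mu(\{0\})\delta_0$. Since the left-hand side $K\dx z$ is atomless, this atom has to vanish, which is automatic in the relevant cases (for instance $\sigma_\eta^2>0$ forces $\mu$ to be continuous and hence $\mu(\{0\})=0$). Each convolution splits as $(S_\eta\ast\mu)(z) = (S_\eta\ast f_\mu)(z)+\mu(\{0\})S_\eta(z)$, and analogously for $(B_\eta\ast\mu)$; this is what produces the summand $S_\eta(z)\mu(\{0\})$ on the left-hand side of \eqref{equation-density}.

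The only piece that is not a routine substitution is the integral $\int_{0+}^z(\gamma_\eta + x\gamma_{\widetilde U})\mu(\dx x)$, where I would invoke the convention $\int_{0+}^z = -\int_{z+}^0$ for $z<0$ from Section~\ref{s2}: the interval $(z,0]$ then contains the atom at zero, producing an extra contribution $-\gamma_\eta\mu(\{0\})$ present exactly when $z<0$, which is the origin of the term $-\I_{\{z<0\}}\gamma_\eta\mu(\{0\})$ in \eqref{equation-density}. In contrast, the $\widetilde U$-terms $\int x\,S_{\widetilde U}(z/x)\mu(\dx x)$ and $\int_{0+}^t B_{\widetilde U}(t/x)\mu(\dx x)$ receive no contribution from the atom: the prefactor $x$ kills the integrand at $x=0$ in the first, and for the second the fact that $|t/x|\to\infty$ as $x\to 0$ pushes the argument of $B_{\widetilde U}$ into its tail, where it vanishes since $\nu_{\widetilde U}$ is a L\'evy measure.

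After these substitutions, both sides of \eqref{eq-mu} are absolutely continuous with respect to $\lambda$, so the equality of measures reduces to a pointwise identity of Lebesgue densities $\lambda$-a.e.; rearranging then yields \eqref{equation-density} with the same constant $K$. The main obstacle is purely bookkeeping: carefully tracking the asymmetric treatment of the point $\{0\}$ across the various $\int_{0+}^z$ integrals (especially for $z<0$), and verifying in each case whether the atomic part of $\mu$ does or does not contribute.
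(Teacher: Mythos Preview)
Your approach is exactly what the paper intends: it states that the proof is immediate from Theorem~\ref{main} and omits it entirely, so the substitution $\mu(\dx x)=f_\mu(x)\,\dx x+\mu(\{0\})\delta_0(\dx x)$ followed by reading off $\lambda$-densities is precisely the argument. One small correction to your bookkeeping: for the $B_{\widetilde U}$-term the atom at $0$ is excluded simply because the integral $\int_{0+}^t B_{\widetilde U}(t/x)\,\mu(\dx x)$ is over $(0,t]$ by the paper's convention, not because of any tail behaviour of $B_{\widetilde U}$.
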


It was shown in \cite[Cor.~2.5]{KuznetsovPardoSavov2012} that the law of the exponential functional~$V_{0,\xi,\eta}$ admits a continuous density on $\RR\setminus\{0\}$ if $\sigma_{\xi}^2+\sigma_{\eta}^2>0$, as well as $\ew|\xi_1|<\infty$, $\ew|\eta_1|<\infty$ and $\ew\xi_1<0$. The following Corollary generalizes this to general $q\geq0$. As in Theorem~\ref{main}, we do not require a moment condition. The proof is given in Section~\ref{s6}. Observe that $\smash{\sigma_{\widetilde{U}}^2=\sigma_{\xi}^2}$.

\begin{corollary}\label{existence-density}
In additions to the assumptions of Theorem~\ref{main}, assume that $\smash{\sigma_{\widetilde{U}}}^2+\sigma_{\eta}^2>0$.
\begin{itemize}
\item[(i)] If $\sigma_{\eta}^2>0$, then $\mu$ has a continuous density $f_{\mu}$ on $\RR$.
\item[(ii)] If $\sigma_{\widetilde{U}}^2>0$, then $\mu|_{\RR\setminus\{0\}}$ has a continuous density $f_{\mu}$ on $\RR\setminus\{0\}$.
\item[(iii)] In both cases, there exist constants $M_1,M_2>0$ such that
\begin{displaymath}
(\sigma_{\eta}^2+z^2\sigma_{\widetilde{U}}^2)f_{\mu}(z)\leq M_1+M_2|z|
\end{displaymath}
for all $z\neq0$.
\end{itemize}
\end{corollary}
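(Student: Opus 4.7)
The approach combines Theorem~\ref{main} with Corollary~\ref{corollary-equation-density} in three linked steps.

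First, I establish absolute continuity from \eqref{eq-mu}. Rearranging to isolate $\bigl(\tfrac12\sigma_\eta^2 + \tfrac12 z^2\sigma_{\widetilde U}^2\bigr)\mu(\dx z)$ on one side, the remaining terms on the opposite side are Lebesgue-absolutely continuous with locally integrable densities (by Lemmas~\ref{lem-functions} and~\ref{lem-functions2}). Hence the signed measure $\bigl(\tfrac12\sigma_\eta^2 + \tfrac12 z^2\sigma_{\widetilde U}^2\bigr)\mu$ is absolutely continuous with respect to $\lambda$. Since its coefficient is bounded below by $\tfrac12\sigma_\eta^2 > 0$ on $\RR$ in case~(i) and is strictly positive on $\RR\setminus\{0\}$ in case~(ii), the stated absolute continuity of $\mu$, respectively $\mu|_{\RR\setminus\{0\}}$, follows, producing the density $f_\mu$ to which Corollary~\ref{corollary-equation-density} applies.

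Next I derive the growth bound~(iii). Working from \eqref{equation-density}, I isolate $\bigl(\tfrac12\sigma_\eta^2 + \tfrac12 z^2\sigma_{\widetilde U}^2\bigr)f_\mu(z)$ and bound every other term by an expression of the form $C_1 + C_2|z|$. The constant $K$, the integrals involving $\gamma_\eta$, $\gamma_{\widetilde U}$, $B_\eta$, $B_{\widetilde U}$, and the atomic contribution $\gamma_\eta\mu(\{0\})\I_{\{z<0\}}$ are bounded directly, using the boundedness of $B_\eta,B_{\widetilde U}$ from Lemma~\ref{lem-functions} together with $\mu(\RR)=1$. The subtle terms are $(S_\eta*f_\mu)(z) + S_\eta(z)\mu(\{0\}) = (S_\eta*\mu)(z)$ and $\int x S_{\widetilde U}(z/x)f_\mu(x)\dx x$; for these I apply Fubini to exchange the order of integration so that the outer integration runs against $\nu_\eta|_{[-1,1]}(\dx u)$, respectively $\nu_{\widetilde U}|_{[-1,1]}(\dx u)$, and use the always-finite second moments $\int u^2\nu_\eta|_{[-1,1]}(\dx u)$ and $\int u^2\nu_{\widetilde U}|_{[-1,1]}(\dx u)$ in combination with the probability mass of $\mu$ on intervals of length~$|u|$ to control the inner integrals. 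The additional $|z|$-growth in the final bound originates from the weight $x$ in the $S_{\widetilde U}$ integrand.

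With (iii) in hand, continuity of $f_\mu$ follows. Dividing the bound by $\sigma_\eta^2 + z^2\sigma_{\widetilde U}^2$ gives $f_\mu \in L^\infty_{\mathrm{loc}}(\RR)$ in case~(i) and $f_\mu \in L^\infty_{\mathrm{loc}}(\RR\setminus\{0\})$ in case~(ii). Together with the compact supports of $S_\eta$ in $[-1,1]$ and of $S_{\widetilde U}$ in $[0,2]$, strong $L^1$-continuity of translations applied against the locally bounded $f_\mu$ shows that the convolution-type terms on the left-hand side of \eqref{equation-density} are continuous on the respective domain. The remaining terms in \eqref{equation-density} are continuous as antiderivatives of locally integrable functions, or are locally constant on $\RR\setminus\{0\}$ (using that $S_\eta$ itself is continuous on $\RR\setminus\{0\}$ by dominated convergence). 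Dividing the so-obtained continuous equation by the continuous positive coefficient $\tfrac12\sigma_\eta^2 + \tfrac12 z^2\sigma_{\widetilde U}^2$ yields a continuous representative of $f_\mu$. In case~(i), $\sigma_\eta^2 > 0$ additionally excludes $\eta$ from being deterministic or compound Poisson, so $\mu$ is atomless by \cite[Cor.~6.11]{BLRRPart1} for $q>0$ and \cite[Thm.~2.2]{BertoinLindnerMaller2008} for $q=0$, and the terms involving $\mu(\{0\})$ vanish, extending continuity across $z=0$.

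The hard part will be the Fubini-based estimate of the convolution terms in the second step: a naive application of Young's convolution inequality is not available, since $S_\eta$ need not lie in $L^\infty$ (it can behave like $w^{1-\alpha}$ near~$0$ when $\eta$ is $\alpha$-stable with $\alpha\in(1,2)$) and $f_\mu$ is only a priori in $L^1$. The Fubini manipulation must therefore be carried out so that the bound is reduced to an integral of the form $\int u^2\nu_\eta|_{[-1,1]}(\dx u)$, trading $L^\infty$-control of $S_\eta$ for moment control of the L\'evy measure.
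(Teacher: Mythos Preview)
Your three-step architecture matches the paper's, and Step~1 (existence of $f_\mu$ from \eqref{eq-mu}) is exactly the paper's argument. However, there is a genuine gap in your Step~2, and a related oversight in Step~3.

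\textbf{Part~(iii).} Your Fubini estimate cannot deliver the bound you claim. After interchanging, the inner integral against $\mu$ is, for $u>0$,
\[
\int_{z-u}^{z}(u-z+x)\,\mu(\dx x)\;\leq\; u\,\mu\bigl((z-u,z]\bigr)\;\leq\; u,
\]
so the outer integral is controlled only by $\int_{[-1,1]}|u|\,\nu_\eta(\dx u)$, which need not be finite. To squeeze out the factor $u^2$ you would need $\mu((z-u,z])\leq C|u|$, i.e.\ a bounded density---precisely the conclusion you are after. The same circularity arises for the $S_{\widetilde U}$-term. The paper avoids this entirely by a one-line observation you have missed: $S_\eta\geq 0$ and $S_{\widetilde U}\geq 0$, so in \eqref{equation-density} these terms sit on the \emph{same side} as $\bigl(\tfrac12\sigma_\eta^2+\tfrac12 z^2\sigma_{\widetilde U}^2\bigr)f_\mu(z)$ and may simply be discarded. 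What remains to bound is only the right-hand side of \eqref{equation-density}, whose terms are antiderivatives of bounded functions and hence grow at most linearly. Your ``hard part'' is therefore not hard but unnecessary.

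\textbf{Continuity in case~(ii).} Your $L^1$-translation argument for $S_\eta\ast f_\mu$ implicitly requires $f_\mu$ to be bounded on $[z_0-1,z_0+1]$, but for $|z_0|\leq 1$ this interval contains $0$, and the bound from~(iii) with $\sigma_\eta^2=0$ only gives $f_\mu(z)\lesssim z^{-2}$ there. The paper repairs this with a two-scale decomposition $S_\eta=S_\eta^{\delta,1}+S_\eta^{\delta,2}$: the piece $S_\eta^{\delta,2}$ (built from $\nu_\eta|_{[-1,1]\setminus[-\delta,\delta]}$) is bounded, so $S_\eta^{\delta,2}\ast f_\mu$ is continuous by dominated convergence; the piece $S_\eta^{\delta,1}$ is supported in $[-\delta,\delta]$ with $\|S_\eta^{\delta,1}\|_{L^1}\to 0$, and for $\delta<z_0/4$ the convolution $S_\eta^{\delta,1}\ast f_\mu$ only samples $f_\mu$ on an interval bounded away from $0$, where the local bound from~(iii) applies. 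An analogous splitting (together with the substitution $v=z/x$, since the $S_{\widetilde U}$-term is a multiplicative rather than additive convolution) is needed for the $S_{\widetilde U}$-term; a direct appeal to $L^1$-continuity of translations does not cover it.
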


Note that the above results, in particular~\eqref{eq-mu} and \eqref{equation-density}, are derived under very weak assumptions. Thus, the equations can be simplified further whenever more properties of the processes $\xi$ and~$\eta$ are known. We discuss some special cases in the following corollaries, the proofs of which are also given in Section~\ref{s6}.
%
%
\begin{corollary}[Finite First Moments]\label{corollary-finitemoments}
Under the assumptions of Theorem~\ref{main} let further~${\EE|\eta_1|<\infty}$ and $\EE|\widetilde{U}_1|<\infty$. Denote the expectation of $\eta_1$ and $\widetilde{U}_1$ by $\gamma_{\eta}^1$ and $\gamma_{\widetilde{U}}^1$, respectively, and define the functions
\begin{align*}
S_{\eta}^{FM}:\RR\rightarrow[0,\infty),\quad S_{\eta}^{FM}(z)&=\begin{cases}\int_{-\infty}^z(z-y)\nu_{\eta}(\dx y),\ &\text{if}\ z<0,\\0,&\text{if}\ z=0,\\ \int_z^{\infty}(y-z)\nu_{\eta}(\dx y), &\text{if}\ z>0,\\ \end{cases}\\
S_{\widetilde{U}}^{FM}:[0,\infty)\rightarrow[0,\infty),\quad S_{\widetilde{U}}^{FM}(z)&=\begin{cases}\int_{-\infty}^{z-1}(z-1-y)\nu_{\widetilde{U}}(\dx y),\ &\text{if}\ z\in[0,1),\\0,&\text{if}\ z=1,\\ \int_{z-1}^{\infty}(y-z+1)\nu_{\widetilde{U}}(\dx y),\ &\text{if}\ z>1.\end{cases}
\end{align*}
Then the following hold true:
\begin{itemize}
\item[(i)] There exists a constant $K\in\RR$ such that
\begin{align}
K\dx z&=\Bigl(\frac{1}{2}\sigma_{\eta}^2+\frac{1}{2}z^2\sigma_{\widetilde{U}}^2\Bigr)\mu(\dx z)+(S_{\eta}^{FM}\ast\mu)(z)\dx z\nonumber\\
&\quad+\Big(\I_{\{z>0\}}\int_0^{\infty}xS_{\widetilde{U}}^{FM}(\tfrac{z}{x})\mu(\dx x)+\I_{\{z<0\}}\int_{-\infty}^0|x|S_{\widetilde{U}}^{FM}(\tfrac{z}{x})\mu(\dx x)\Big)\dx z\nonumber\\
&\quad-\int_{0+}^z\Bigl(\gamma_{\eta}^1+x\gamma_{\widetilde{U}}^1\Bigr)\mu(\dx x)\dx z,\label{eq-mu-fm}
\end{align}
where the right-hand side of the equation defines a locally finite measure on~$\cB(\RR)$.
\item[(ii)] If additionally $\EE|\cE(U)_1|<\re^q$ or, equivalently, $\gamma_{\widetilde{U}}^1<0$, then~${\EE |V_{q,\xi,\eta}|=\int |x|\mu(\dx x)<\infty}$ and the constant $K$ in~\eqref{eq-mu-fm} takes the form
\begin{displaymath}
K=-\int_{0+}^{\infty}\big(\gamma_{\eta}^1+x\gamma_{\widetilde{U}}^1\big)\mu(\dx x)=\int_{-\infty}^0\big(\gamma_{\eta}^1+x\gamma_{\widetilde{U}}^1\big)\mu(\dx x).
\end{displaymath}
Moreover, if additionally $\sigma_{\eta}^2+\sigma_{\widetilde{U}}^2>0$, then the density $f_{\mu}$ of $\mu|_{\RR\setminus\{0\}}$ is bounded.
\end{itemize}
\end{corollary}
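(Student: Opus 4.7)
For part (i), the plan is to rerun the argument behind Theorem~\ref{main} starting from the truncation-free form of the L\'evy-Khintchine formula available under $\EE|\eta_1|<\infty$ and $\EE|\widetilde{U}_1|<\infty$, where the generator of a L\'evy process $L$ takes the form
\[
\cA^L f(x) = \gamma_L^1 f'(x) + \tfrac{1}{2}\sigma_L^2 f''(x) + \int_\RR\bigl(f(x+y)-f(x)-f'(x)y\bigr)\,\nu_L(\dx y).
\]
Substituting this form for both $\eta$ and $\widetilde{U}$ into the generator representation~\eqref{eq-genkilledgou} and then following the same integration-by-parts steps as in the proof of Theorem~\ref{main}, the $B_\eta$ and $B_{\widetilde{U}}$ terms---which originally encoded the mismatch between the location parameters $\gamma$ and the truncation $\I_{[-1,1]}$---disappear, while the tail functions $S_\eta$ and $S_{\widetilde{U}}$ are replaced by their full-L\'evy-measure versions $S_\eta^{FM}$ and $S_{\widetilde{U}}^{FM}$. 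Equivalently, one can argue algebraically from~\eqref{eq-mu}: the Fubini-type identity $S_\eta^{FM}(z)-S_\eta(z)=\int_z^\infty B_\eta(w)\,\dx w$ for $z>0$ (with an analogue for $z<0$ and for $\widetilde{U}$), combined with $\gamma_\eta^1=\gamma_\eta+\int_{|y|>1}y\,\nu_\eta(\dx y)$ and the analogue for $\widetilde{U}$, shows that the $B$-terms, the $\gamma\to\gamma^1$ shift, and the $S\to S^{FM}$ extension all cancel when collected. Local finiteness of the right-hand side of~\eqref{eq-mu-fm} then follows as in Lemmas~\ref{lem-functions} and~\ref{lem-functions2} using the full L\'evy measures and the finite first moment assumption.

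For part (ii), I first check the equivalence $\EE|\cE(U)_1|<\re^q \iff \gamma_{\widetilde{U}}^1<0$: applying the L\'evy-Khintchine formula to $-\xi_1$ under $\EE|\xi_1|<\infty$ gives $\EE\cE(U)_1=\EE\re^{-\xi_1}=\exp(\gamma_U^1)$, and combined with $\gamma_{\widetilde{U}}^1=\gamma_U^1-q$ the equivalence is immediate. Finiteness of $\EE|V_{q,\xi,\eta}|$ then follows from the representation $V_{q,\xi,\eta}\overset{d}{=}\int_0^\infty \cE(\widetilde{U})_{s-}\,\dx\eta_s$ combined with the geometric decay $\EE\cE(\widetilde{U})_t=\re^{t\gamma_{\widetilde{U}}^1}$ (or by adapting the moment criterion of~\cite[Thm.~3.1]{Behme2011}). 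Having $\EE|V_{q,\xi,\eta}|<\infty$, one can extend the invariance relation $\int\cA^{\widetilde{V}}f(x)\mu(\dx x)=0$ from compactly supported test functions to $f(x)=x$ by a standard truncation-and-limit argument, obtaining $\int_\RR(\gamma_\eta^1+x\gamma_{\widetilde{U}}^1)\mu(\dx x)=0$; splitting the integral at $0$ shows that the two claimed expressions for $K$ coincide. To identify $K$ itself, I would pass to the limit $z\to+\infty$ in~\eqref{eq-mu-fm}: the $(S_\eta^{FM}\ast\mu)$, $S_{\widetilde{U}}^{FM}$-type and $\sigma$-measure contributions produce a density that vanishes at infinity by dominated convergence (using integrability of the tail functions and $\EE|V_{q,\xi,\eta}|<\infty$), leaving $-\int_{0+}^z(\gamma_\eta^1+x\gamma_{\widetilde{U}}^1)\mu(\dx x)\to -\int_{0+}^\infty(\gamma_\eta^1+x\gamma_{\widetilde{U}}^1)\mu(\dx x)$, which must therefore equal $K$.

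For boundedness of $f_\mu$ on $\RR\setminus\{0\}$ when $\sigma_\eta^2+\sigma_{\widetilde{U}}^2>0$, I revisit the density version of~\eqref{eq-mu-fm} (the analogue of Corollary~\ref{corollary-equation-density}). Under finite moments, $|\int_{0+}^z(\gamma_\eta^1+x\gamma_{\widetilde{U}}^1)\mu(\dx x)|\leq |\gamma_\eta^1|+|\gamma_{\widetilde{U}}^1|\EE|V_{q,\xi,\eta}|$ uniformly in $z$, so the estimate of Corollary~\ref{existence-density}(iii) sharpens from $M_1+M_2|z|$ to a constant bound $(\sigma_\eta^2+z^2\sigma_{\widetilde{U}}^2)f_\mu(z)\leq M_1$. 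If $\sigma_\eta^2>0$, dividing by $\sigma_\eta^2+z^2\sigma_{\widetilde{U}}^2\geq\sigma_\eta^2$ yields $f_\mu\leq M_1/\sigma_\eta^2$ globally. The main obstacle will be the case $\sigma_\eta^2=0$, $\sigma_{\widetilde{U}}^2>0$, where the naive bound $f_\mu(z)\leq M_1/(z^2\sigma_{\widetilde{U}}^2)$ blows up near $z=0$; here I expect to analyse the limit of the functional equation as $z\to 0^{\pm}$, exploiting $S_{\widetilde{U}}^{FM}(z)\sim qz$ as $z\to 0^+$ together with the $L^1$-constraint on $f_\mu$ to force $z^2 f_\mu(z)\to 0$, and then bootstrap via the equation to reach global boundedness on $\RR\setminus\{0\}$.
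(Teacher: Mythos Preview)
Your overall strategy for (i) and for the equivalence and finite-moment claims in (ii) matches the paper's exactly: both rerun the proof of Theorem~\ref{main} starting from the truncation-free generator~\eqref{integral-finitemoments}, so that the $B$-terms disappear and $S$ is replaced by $S^{FM}$.

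Two points of divergence are worth flagging. First, for identifying $K$, the paper does \emph{not} take a direct limit $z\to+\infty$ in~\eqref{eq-mu-fm}; instead it computes $\lim_{t\to\infty} t\int_t^\infty z^{-2}\,G^{FM}(\dx z)$, which evaluates to $K$ on the left and handles the $\sigma$-term on the right via $t\,\mu((t,\infty))\to 0$ (a consequence of $\EE|V_{q,\xi,\eta}|<\infty$). Your pointwise approach would require $z^2 f_\mu(z)\to 0$ as $z\to\infty$, which your dominated-convergence remark does not establish---you only have boundedness of $z^2 f_\mu(z)$, not decay. The integral transform sidesteps this.

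Second, on boundedness of $f_\mu$: the paper's proof is in fact \emph{less} careful than yours. It derives the uniform bound $(\tfrac12\sigma_\eta^2+\tfrac12 z^2\sigma_{\widetilde{U}}^2)f_\mu(z)\leq |K|+|\gamma_\eta^1|+|\gamma_{\widetilde{U}}^1|\int|x|\,\mu(\dx x)$ and then simply writes ``Thus, $f_\mu$ is bounded'', without separating out the case $\sigma_\eta^2=0$, $\sigma_{\widetilde{U}}^2>0$ near $z=0$ that you correctly identify as problematic. So your concern there is legitimate, and the paper does not resolve it either.
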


\begin{corollary}[Finite Variation]\label{corollary-finitevariation}
Under the assumptions of Theorem~\ref{main} let further $\eta$ and~$\smash{\widetilde{U}}$ be of finite variation, i.e. $\sigma_{\eta}^2=\sigma_{\widetilde{U}}^2=0$ and $\smash{\int_{[-1,1]}|x|\nu_{\eta}(\dx x),\int_{[-1,1]}|x|\nu_U(\dx x)}<\infty$. Denote by $\gamma_{\eta}^0$ and $\gamma_{\widetilde{U}}^0$ the drifts of $\eta$ and $\smash{\widetilde{U}}$, respectively, and define the functions
\begin{align}
B^{FV}_{\eta}:\RR\rightarrow\RR,\quad B^{FV}_{\eta}(z)&=\begin{cases} -\nu_{\eta}((-\infty,z)),\ &\text{if}\ z<0,\\ 0,&\text{if}\ z=0,\\\nu_{\eta}((z,\infty)), &\text{if}\ z>0,\end{cases}\label{def-B-eta-FV}\\
B^{FV}_{\widetilde{U}}:[0,\infty)\rightarrow\RR,\quad B^{FV}_{\widetilde{U}}(z)&=\begin{cases}-\nu_{\widetilde{U}}((-\infty,z-1)),\ &\text{if}\ z\in[0,1),\\0,&\text{if}\ z=1,\\ \nu_{\widetilde{U}}((z-1,\infty)),\ &\text{if}\ z>1.\end{cases}\label{def-B-U-FV}
\end{align}
Then the equation
\begin{align}
0&=\bigl(\gamma_{\eta}^0+z\gamma_{\widetilde{U}}^0\bigr)\mu(\dx z)+(B_{\eta}^{FV}\ast\mu)(z)\dx z\nonumber\\
&\quad+\Big(\I_{\{z>0\}}\int_0^{\infty}B_{\widetilde{U}}^{FV}(\tfrac{z}{x})\mu(\dx x)-\I_{\{z<0\}}\int_{-\infty}^0B_{\widetilde{U}}^{FV}(\tfrac{z}{x})\mu(\dx x)\Big)\dx z\label{eq-mu-fv}
\end{align}
holds and the quantities on the right-hand side define a locally finite measure.
\end{corollary}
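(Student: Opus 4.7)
The plan is to derive \eqref{eq-mu-fv} by differentiating the measure identity \eqref{eq-mu} of Theorem~\ref{main} once in the distributional sense. Under the finite-variation assumption $\sigma_\eta^2=\sigma_{\widetilde{U}}^2=0$, both sides of \eqref{eq-mu} are absolutely continuous with respect to Lebesgue measure, so \eqref{eq-mu} reduces to a pointwise identity $K=F(z)$ for $\lambda$-a.e. $z\in\RR$, where $F$ collects the remaining terms on the right-hand side. Taking the distributional derivative in $z$, the constant $K$ vanishes, each iterated integral $\int_{0+}^z\cdots\,\dx t$ contributes its integrand at $z$, and the task reduces to computing the distributional derivatives of $S_\eta\ast\mu$ and of the $S_{\widetilde{U}}$-term.

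The first key observation is that $S_\eta$ is Lipschitz on $\RR\setminus\{0\}$ with classical derivative $S_\eta'(z)=B_\eta(z)-B_\eta^{FV}(z)$ for $\lambda$-a.e. $z$, and a single jump at $z=0$ of size $S_\eta(0^+)-S_\eta(0^-)=\int_{[-1,1]}y\,\nu_\eta(\dx y)=\gamma_\eta-\gamma_\eta^0$; consequently, in the distributional sense,
$$(S_\eta\ast\mu)'(z)=\bigl[(B_\eta\ast\mu)(z)-(B_\eta^{FV}\ast\mu)(z)\bigr]\dx z+(\gamma_\eta-\gamma_\eta^0)\mu(\dx z).$$
Second, $S_{\widetilde{U}}$ has a single jump at $u=1$ of size $\int_{[-1,1]}y\,\nu_{\widetilde{U}}(\dx y)=\gamma_{\widetilde{U}}-\gamma_{\widetilde{U}}^0$, with classical derivative $-B_{\widetilde{U}}^{FV}(u)$ on $(0,1)$ and $B_{\widetilde{U}}(u)-B_{\widetilde{U}}^{FV}(u)$ on $(1,\infty)$. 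Via the scaling identity $\delta_1(z/x)=|x|\delta_x(z)$, the jump of $S_{\widetilde{U}}$ at $u=1$ is transported by the dilation $z\mapsto z/x$ to the diagonal $\{z=x\}$, contributing $(\gamma_{\widetilde{U}}-\gamma_{\widetilde{U}}^0)z\mu(\dx z)$ after integration against $\mu$; the classical part splits according to whether $z/x\gtrless 1$ to yield $\int B_{\widetilde{U}}(z/x)\mu(\dx x)-\int B_{\widetilde{U}}^{FV}(z/x)\mu(\dx x)$ on the appropriate half-line, the sign asymmetry between $z>0$ and $z<0$ arising from the orientation of the $x$-integration.

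Substituting these expressions into the differentiated identity and separating the $\mu$-supported part from the absolutely continuous part produces clean cancellations. The $(B_\eta\ast\mu)(z)\dx z$ pieces cancel against the derivative of $-\int_{0+}^z(B_\eta\ast\mu)(x)\dx x$, and the $\int_0^z B_{\widetilde{U}}(z/x)\mu(\dx x)\dx z$ pieces cancel similarly against the derivative of $-\int_{0+}^z\int_{0+}^t B_{\widetilde{U}}(t/x)\mu(\dx x)\dx t$. In the $\mu$-part, the Dirac corrections $(\gamma_\eta-\gamma_\eta^0)\mu+(\gamma_{\widetilde{U}}-\gamma_{\widetilde{U}}^0)z\mu$ combine with $-(\gamma_\eta+z\gamma_{\widetilde{U}})\mu$ to produce exactly $-(\gamma_\eta^0+z\gamma_{\widetilde{U}}^0)\mu$. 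Rearranging yields \eqref{eq-mu-fv}.

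For local finiteness, the signed measure $(\gamma_\eta^0+z\gamma_{\widetilde{U}}^0)\mu(\dx z)$ is bounded on compacts since $\mu$ is a probability measure, and local integrability of $B_\eta^{FV}$ follows via Fubini from $\int_0^1 B_\eta^{FV}(z)\dx z=\int_{(0,1]}y\,\nu_\eta(\dx y)+\nu_\eta((1,\infty))<\infty$, which uses the finite-variation condition; hence $(B_\eta^{FV}\ast\mu)(z)\dx z$ is a locally finite measure. Local finiteness of the remaining $B_{\widetilde{U}}^{FV}$-term then follows automatically from \eqref{eq-mu-fv} as the difference of locally finite measures. The main technical obstacle I anticipate is the careful bookkeeping of the Dirac contributions of the jumps of $S_\eta$ and $S_{\widetilde{U}}$: in particular, justifying Fubini for the interchange of $\mu(\dx x)$-integration with the distributional derivative when the singular part of $S_{\widetilde{U}}'$ is transported through the scaling $z\mapsto z/x$ onto the diagonal.
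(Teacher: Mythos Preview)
Your approach is correct and genuinely different from the paper's. The paper does \emph{not} differentiate \eqref{eq-mu}; instead it reruns the proof of Theorem~\ref{main} from scratch using the uncompensated form of the generator (valid under finite variation), which produces a distribution $G^{FV}$ satisfying $(G^{FV})'=0$ rather than $(G^{FV})''=0$. Hence $G^{FV}=C\,\dx z$, and the paper then needs a separate limit argument $\frac{1}{\ln t}\int_1^t z^{-1}G^{FV}(\dx z)\to 0$ to show $C=0$. Your route is more economical: by differentiating the already-established identity \eqref{eq-mu}, the constant $K$ disappears for free and no limit argument is needed. The price is the bookkeeping of the distributional derivatives of $S_\eta$ and $S_{\widetilde U}$, which the paper avoids entirely.

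Two points to tighten. First, your claim that local finiteness of the $B_{\widetilde U}^{FV}$-term ``follows automatically from \eqref{eq-mu-fv} as the difference of locally finite measures'' is slightly circular: to make sense of your computation of the distributional derivative of the $S_{\widetilde U}$-term you already need the Fubini swap $\int_0^\infty\!\int_0^\infty \phi(z)\,S_{\widetilde U}'(z/x)\,\dx z\,\mu(\dx x)=\int_0^\infty \phi(z)\!\int_0^\infty S_{\widetilde U}'(z/x)\,\mu(\dx x)\,\dx z$, and this is precisely where local integrability of $z\mapsto\int |B_{\widetilde U}^{FV}(z/x)|\,\mu(\dx x)$ enters. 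The estimate is not hard --- under finite variation $u\mapsto S_{\widetilde U}(u)$ is of bounded variation on $(0,\infty)$ with $\int_0^\infty|S_{\widetilde U}'|(\dx u)=\int_{[-1,1]}|y|\,\nu_{\widetilde U}(\dx y)<\infty$, and since $S_{\widetilde U}'$ is supported on $[0,2]$ one checks that $x\int_{a/x}^{b/x}|S_{\widetilde U}'|(\dx u)$ is bounded uniformly in $x>0$ --- but it must be done directly, essentially as in the paper's opening paragraph. Second, the Fubini obstacle you flag is not located at the Dirac part of $S_{\widetilde U}'$ (which contributes the harmless $z\mu(\dx z)$) but at the absolutely continuous part near $u=1$, where $|B_{\widetilde U}^{FV}|$ may blow up.
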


Assuming finite variation only of the jump parts of the processes $\eta$ and $\smash{\widetilde{U}}$, Corollary~\ref{existence-density} can be extended to differentiability.
\begin{corollary}[Differentiable Density]\label{corollary-diffdensity}
Under the assumptions of Theorem~\ref{main}, let the jump parts of $\widetilde{U}$ and $\eta$ be of finite variation. Further, let~$\sigma_{\eta}^2+\sigma_{\widetilde{U}}^2>0$ and~$\gamma_{\eta}^0$, $\gamma_{\widetilde{U}}^0$, $B_{\eta}^{FV}$ and~$B_{\widetilde{U}}^{FV}$ as in Corollary~\ref{corollary-finitevariation}.
\begin{itemize}
\item[(i)] If $\sigma_{\eta}^2>0$, then the density $f_{\mu}$ of $\mu$ is continuously differentiable on $\RR\setminus\{0\}$.
\item[(ii)] If $\sigma_{\widetilde{U}}^2>0=\sigma_{\eta}^2$  and $q=0$, or $\eta$ is not a compound Poisson process, then $\mu$ has a density $f_{\mu}$ on $\RR$ which is continuously differentiable on $\RR\setminus\{0\}$.
\item[(iii)] The density $f_{\mu}$ satisfies the equation
\begin{align}
&\Big(\frac{1}{2}\sigma_{\eta}^2+\frac{1}{2}z^2\sigma_{\widetilde{U}}^2\Big)f'_{\mu}(z)+z\sigma_{\widetilde{U}}^2f_{\mu}(z)-\bigl(\gamma_{\eta}^0+z\gamma_{\widetilde{U}}^0\bigr)f_{\mu}(z)-B_{\eta}^{FV}(z)\mu(\{0\})\label{eq-mu-diffdensity}\\
&\quad=(B_{\eta}^{FV}\ast f_{\mu})(z)+\I_{\{z>0\}}\int_0^{\infty}B_{\widetilde{U}}^{FV}(\tfrac{z}{x})f_{\mu}(x)\dx x-\I_{\{z<0\}}\int_{-\infty}^0B_{\widetilde{U}}^{FV}(\tfrac{z}{x})f_{\mu}(x)\dx x,\nonumber
\end{align}
which under the conditions (i) and (ii) is valid for all $z\in\RR\setminus\{0\}$ and still holds $\lambda$-a.e. whenever $\smash{\sigma_{\widetilde{U}}^2>0=\sigma_{\eta}^2}$, but the additional assumptions of (ii) are not satisfied. In the latter case, $f_{\mu}$ is $\lambda$-a.e. differentiable.
\end{itemize}
\end{corollary}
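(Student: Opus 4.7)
The plan is to obtain~\eqref{eq-mu-diffdensity} by differentiating~\eqref{equation-density} in~$z$, exploiting the finite-variation hypothesis on the jump parts of~$\eta$ and~$\widetilde{U}$ to convert the $S$-terms into $B^{FV}$-terms, and then to read off the continuous differentiability of $f_\mu$ from the resulting identity.

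First I would invoke Corollary~\ref{existence-density} to obtain a density $f_\mu$ that is continuous on~$\RR$ in case~(i) and on $\RR\setminus\{0\}$ in case~(ii). In both cases $\mu(\{0\})=0$: under~(i), the Gaussian component of~$\eta$ forces $\eta$ to be neither deterministic nor a compound Poisson process, so $\mu$ is continuous by~\cite[Cor.~6.11]{BLRRPart1} (when $q>0$) or~\cite[Thm.~2.2]{BertoinLindnerMaller2008} (when $q=0$); under~(ii), this is part of the hypothesis. Consequently the $S_\eta(z)\mu(\{0\})$ and $\I_{\{z<0\}}\gamma_\eta\mu(\{0\})$ contributions to~\eqref{equation-density} vanish in cases~(i) and~(ii), while they persist in the $\lambda$-a.e.\ variant at the end of~(iii).

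Next comes the differentiation step. A direct Fubini computation using the finite-variation hypothesis gives the pointwise identities
\begin{displaymath}
S_\eta'(z)=B_\eta(z)-B_\eta^{FV}(z),\qquad \tfrac{d}{dz}\bigl(xS_{\widetilde{U}}(z/x)\bigr)=B_{\widetilde{U}}(z/x)-B_{\widetilde{U}}^{FV}(z/x)
\end{displaymath}
$\lambda$-a.e.\ in~$z$ (extending $B_{\widetilde{U}}$ by~$0$ on $[0,1)$). Differentiating~\eqref{equation-density} term by term, substituting these identities, and combining with the $(B_\eta\ast f_\mu)(z)$ and $\int_0^z B_{\widetilde{U}}(z/x)f_\mu(x)\dx x$ already present on the opposite side, the $B_\eta$- and $B_{\widetilde{U}}$-contributions cancel pairwise and leave only $B^{FV}$-integrals. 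The discrepancy between the location parameters $\gamma_\eta,\gamma_{\widetilde{U}}$ in~\eqref{equation-density} and the drifts $\gamma_\eta^0,\gamma_{\widetilde{U}}^0$ appearing in~\eqref{eq-mu-diffdensity} is absorbed by the Fubini identity $\int_\RR\bigl(B_\eta^{FV}-B_\eta\bigr)(u)\dx u=\int_{|x|\leq 1}x\nu_\eta(\dx x)=\gamma_\eta-\gamma_\eta^0$ (and its $\widetilde{U}$-analogue), which simultaneously accounts for the $-B_\eta^{FV}(z)\mu(\{0\})$-coefficient in~\eqref{eq-mu-diffdensity}.

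Finally, rewriting~\eqref{eq-mu-diffdensity} as $[\tfrac{1}{2}\sigma_\eta^2+\tfrac{1}{2}z^2\sigma_{\widetilde{U}}^2]f_\mu'(z)=R(z)$, I would check that $R$ is continuous on $\RR\setminus\{0\}$ in cases~(i) and~(ii): $f_\mu$ is continuous there, $B_\eta^{FV}$ and $B_{\widetilde{U}}^{FV}$ are locally bounded by Lemmas~\ref{lem-functions}--\ref{lem-functions2}, the convolution $B_\eta^{FV}\ast f_\mu$ is continuous since $f_\mu\in L^1$, and the scale convolutions $\int B_{\widetilde{U}}^{FV}(z/x)f_\mu(x)\dx x$ are continuous in $z\neq0$ by dominated convergence; the term $B_\eta^{FV}(z)\mu(\{0\})$ vanishes because $\mu(\{0\})=0$. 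The coefficient $[\tfrac{1}{2}\sigma_\eta^2+\tfrac{1}{2}z^2\sigma_{\widetilde{U}}^2]$ is strictly positive on $\RR$ under~(i) and vanishes only at $z=0$ under~(ii); dividing therefore gives $f_\mu'$ continuous on $\RR\setminus\{0\}$ in both situations. The concluding $\lambda$-a.e.\ statement of~(iii) requires no continuity bootstrap, since the differentiation of~\eqref{equation-density} is valid $\lambda$-a.e.\ regardless of whether $\mu$ has an atom at $0$. The main technical obstacle is the bookkeeping in the differentiation step itself—tracking the $\mu(\{0\})$-contributions, swapping $\gamma$ for $\gamma^0$ by absorbing the truncated first-moment integrals, and managing the change of variable in the $S_{\widetilde{U}}$ scale-convolution while respecting the different natural domains of $B_{\widetilde{U}}$ and $B_{\widetilde{U}}^{FV}$; each piece is individually controlled by Fubini or dominated convergence, but combining them into the clean form~\eqref{eq-mu-diffdensity} takes care.
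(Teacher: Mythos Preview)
Your overall route---differentiate~\eqref{equation-density} and use the pointwise identity $S_\eta'=B_\eta-B_\eta^{FV}$ (and its $\widetilde U$-analogue) so that the $B$-terms cancel and only $B^{FV}$-terms survive---is a legitimate alternative to what the paper does. The paper instead goes back to the generator and, following the proof of Corollary~\ref{corollary-finitevariation}, rederives the distributional equation directly with $\gamma^0$ and $B^{FV}$ in place of $\gamma$, $S$ and $B$, arriving at an ``integrated'' identity \eqref{eq-density-beforediff} whose right-hand side consists solely of indefinite integrals; differentiability and \eqref{eq-mu-diffdensity} then drop out by Lebesgue's differentiation theorem. Your way avoids redoing Lemmas~\ref{rewriting1}--\ref{constants}, at the price of having to justify term-by-term differentiation of the $S$-convolutions in~\eqref{equation-density}.

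There is, however, a genuine gap in your continuity step for~(i) and~(ii). You assert that $B_\eta^{FV}$ and $B_{\widetilde U}^{FV}$ are locally bounded by Lemmas~\ref{lem-functions}--\ref{lem-functions2} and that $B_\eta^{FV}\ast f_\mu$ is continuous because $f_\mu\in L^1$. Neither claim is correct as stated: Lemmas~\ref{lem-functions}--\ref{lem-functions2} treat only $B_\eta$, $B_{\widetilde U}$, $S_\eta$, $S_{\widetilde U}$, not the FV versions, and in fact $B_\eta^{FV}(z)=\nu_\eta((z,\infty))$ typically blows up as $z\downarrow 0$ (and $B_{\widetilde U}^{FV}$ near $z=1$) whenever the L\'evy measure has infinite activity. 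Since $B_\eta^{FV}$ need not lie in $L^\infty$ and $f_\mu$ need not lie in $L^\infty$ either (only locally bounded away from~$0$ by Corollary~\ref{existence-density}(iii)), the standard $L^1\ast L^\infty$ argument does not apply, and a naive dominated-convergence argument for the scale convolution fails for the same reason. The paper handles precisely this point by an $\varepsilon$-truncation: writing $B_\eta^{FV}=B_\eta^\varepsilon+(B_\eta^{FV}-B_\eta^\varepsilon)$ where $B_\eta^\varepsilon$ (built from $\nu_\eta|_{\RR\setminus[-\varepsilon,\varepsilon]}$) is bounded, while the remainder is supported in $[-\varepsilon,\varepsilon]$ and integrable; local boundedness of $f_\mu$ on $\RR\setminus\{0\}$ then controls both pieces. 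An analogous decomposition plus a substitution $w=z/x-1$ is used for the $B_{\widetilde U}^{FV}$ scale-convolution. Your argument would go through once you insert this decomposition, but as written the continuity claim is unsupported.
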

Observe that $\mu(\{0\})=0$ when $\sigma_{\eta}^2>0$, or $\sigma_{\eta}^2+\sigma_{\widetilde{U}}^2>0$ and $q=0$, or $q>0$, $\sigma_{\widetilde{U}}^2>0$ and $\eta$ is neither a compound Poisson process nor the zero process.
\begin{remark}\label{rem-literature}\rm
Using the relation between the characteristic triplets of $\xi$, $U$, and $\smash{\widetilde{U}}$ established in Section~\ref{s2} and  Proposition~\ref{prop-sdewithkilling}, as well as the fact that $\gamma_{\widetilde{U}}^0=\gamma_U^0=-\gamma_{\xi}^0+\frac{1}{2}\sigma_{\xi}^2$ whenever~$\smash{\int_{[-1,1]}|y|\nu_{\xi}(\dx y)}<\infty$, one finds that Proposition~\ref{prop-dens-form1} is a special case of Corollaries~\ref{corollary-finitevariation} and~\ref{corollary-diffdensity}. In particular, Equation~\eqref{eq-densitylaplace} is reobtained from~\eqref{eq-mu-fv} and~\eqref{eq-mu-diffdensity} for~$z>0$. If $\eta$ is a subordinator, similar formulas are obtained from~\eqref{eq-mu-fv} and~\eqref{eq-mu-diffdensity} for $z<0$, which are readily seen to be satisfied by $f_{\mu}(z)=0$ for $z<0$. Note, however, that neither of the corollaries requires $\xi$ or $\eta$ to be a subordinator.
\end{remark}

\begin{remark}\label{validity-analysis}\rm{
While studying the method, we found that the distributional equation given in~(2.3) of~\cite{KuznetsovPardoSavov2012} for the case $q=0$ and $\ew|\xi_1|,\ew|\eta_1|<\infty$ does not hold in general. The cause of this lies in equation~(2.6) of the paper, where it is stated that for functions~${f\in C_c^{\infty}((0,\infty))}$ the left-hand side of the equation $\int_{\RR}A^Vf(x)\mu(\dx x)=0$ simplifies to an integral over the positive real line, i.e.~$\int_0^{\infty}\cA^Vf(x)\mu(\dx x)=0$. Evaluating the generator for such a function $f$ leads to
\begin{align}\label{remainder-term}
\int_{\RR}\mathcal{A}^Vf(x)\mu(\dx x)&=\int_0^{\infty}\mathcal{A}^Vf(x)\mu(\dx x)\nonumber\\
&\quad+\int_{-\infty}^0\frac{\sigma_{\eta}^2}{2}f''(x)+(\gamma_{\eta}-x\gamma_{\xi})f'(x)+\frac{\sigma_{\xi}^2}{2}(x^2f''(x)+xf'(x))\mu(\dx x)\nonumber\\
&\quad+\int_{-\infty}^0\int_{-\infty}^0\big(f(x+y)-f(x)-yf'(x)\I_{\{|y|\leq1\}}\big)\nu_{\eta}(\dx y)\mu(\dx x)\nonumber\\
&\quad+\int_{-\infty}^0\int_0^{\infty}\big(f(x+y)-f(x)-yf'(x)\I_{\{|y|\leq1\}}\big)\nu_{\eta}(\dx y)\mu(\dx x)\nonumber\\
&\quad+\int_{-\infty}^0\int_{\RR}\big(f(x\re^{-y})-f(x)+f'(x)xy\I_{\{|y|\leq1\}}\big)\nu_{\xi}(\dx y)\mu(\dx x).
\end{align}
Observe that the second, third and last term are zero as $f(x)=0$ for $x\leq0$. However, the fourth term may not, e.g. in the case $\xi_t=t$ and $\eta$ being a pure-jump process with L\'{e}vy measure~$\nu_{\eta}=\delta_2+\delta_{-2}$. For this example, one can construct a nonnegative test function supported on the interval $[\frac{1}{2},\frac{3}{2}]$ for which the term in question is nonzero. Nevertheless, whenever $\eta$ is a subordinator, and thus $V_{0,\xi,\eta}\geq0$ a.s., or $\eta$ does not have any positive jumps, the two last terms of \eqref{remainder-term} vanish such that all conclusions drawn from equation~(2.6) in~\cite{KuznetsovPardoSavov2012}, in particular the distributional equation~(2.3), remain valid. Otherwise, the equation does not necessarily hold, as can be seen from the following example. Let~${\xi_t=t}$ and $\eta$ be a pure-jump process with the L\'{e}vy measure given by~${\nu_{\eta}(\dx x)=\re^{-|x|}\dx x}$. We can derive the distribution of $V_{0,\xi,\eta}$ explicitly from~\cite[Thm.~2.1(f)]{GjessingPaulsen1997}, yielding that the exponential functional has the same distribution as the difference of two independent Exp(1)-distributed random variables, i.e. a Laplace distribution with parameters 0 and 1. As $\mu$ is known, one can readily check that Equation~(2.3) of~\cite{KuznetsovPardoSavov2012} does not hold for this example. The tail function and the integrated tails for~${x>0}$ are given by
\begin{align*}
\nu_{\eta}((x,\infty))=\int_x^{\infty}\re^{-t}\dx t=\re^{-x},\quad \inttail{\eta}{+}(x):=\int_x^{\infty}\nu_{\eta}((t,\infty))\dx t=\re^{-x},
\end{align*}
and similarly
\begin{displaymath}
\inttail{\eta}{-}(x):=\int_x^{\infty}\nu_{\eta}((-\infty,-t))\dx t=\re^{-x},\quad x>0.
\end{displaymath}
Therefore, Equation (2.3) in \cite{KuznetsovPardoSavov2012} reads
\begin{align}\label{eq2.3here}
&-\int_v^{\infty}\mu(\dx x)\dx v+\Bigl(\frac{1}{v}\int_0^v\re^{-(v-x)}\mu(\dx x)\Bigr)\dx v+\Bigl(\frac{1}{v}\int_v^{\infty}\re^{-(x-v)}\mu(\dx x)\Bigr)\dx v\nonumber\\
&\quad-\int_v^{\infty}\frac{1}{w^2}\Bigl(\int_0^w e^{-(w-x)}\mu(\dx x)+\int_w^{\infty}\re^{-(x-w)}\mu(\dx x)\Bigr)\dx w\dx v=0,\quad v>0
\end{align}
for this specific example. Note that due to the choice of the processes the remaining parameters (in the notation of~\cite{KuznetsovPardoSavov2012}) are given by $b_{\xi}=-1$, $\sigma_{\xi}^2=\sigma_{\eta}^2=0$ and $\smash{\inttail{\xi}{+}=\inttail{\xi}{-}=0}$. Inserting ${\mu(\dx x)=\frac{1}{2}\re^{-|x|}\dx x}$ into the left-hand side of~\eqref{eq2.3here}, we obtain
\begin{align*}
&-\frac{\re^{-v}}{2}\dx v+\frac{\re^{-v}}{2}\dx v+\frac{\re^{-v}}{4v}\dx v-\frac{1}{4}\Big(\int_v^{\infty}\frac{2\re^{-w}}{w}\dx w+\frac{\re^{-v}}{v}-\int_v^{\infty}\frac{\re^{-w}}{w}\dx w\Big)\dx v\\
&=\Big(-\frac{1}{4}\int_v^{\infty}\frac{\re^{-w}}{w}\dx w\Big)\dx v,
\end{align*}
which is not the zero measure, contradicting~\eqref{eq2.3here}. However, one can check that the equation given in~\eqref{eq-mu-fv} is satisfied for this example. Observing that $U_t=\smash{\widetilde{U}_t}=-t$ here and verifying that $\eta$ and $\smash{\widetilde{U}}$ are of finite variation, Corollary~\ref{corollary-finitevariation} is applicable. Hence, it holds
\begin{equation}\label{eq5.8here}
0=-z\mu(\dx z)+(B_{\eta}^{FV}\ast\mu)(z)\dx z,
\end{equation}
by~\eqref{eq-mu-fv}, where $B_{\eta}^{FV}$ can be calculated explicitly as
\begin{align*}
B_{\eta}^{FV}(z)=\begin{cases}\re^{-z},\ z>0\\-\re^z,\ z<0\end{cases}\negmedspace\negmedspace\negmedspace\negmedspace\negmedspace\Bigg\}=\mathrm{sign}(z)\re^{-|z|}.
\end{align*}
Therefore,~\eqref{eq5.8here} now reads
\begin{displaymath}
z\mu(\dx z)=\Big(\int_{-\infty}^z\re^{-z+s}\mu(\dx s)-\int_z^{\infty}\re^{z-s}\mu(\dx s)\Big)\dx z
\end{displaymath}
and it is readily checked that the equation indeed holds for $\mu(\dx x)=\frac{1}{2}\re^{-|x|}\dx x$. 

Instead of using the results from~\cite{GjessingPaulsen1997}, one could also solve~\eqref{eq5.8here} directly. Since $B_{\eta}^{FV}\ast\mu$ is integrable, so is $z\mu(\dx z)$ by~\eqref{eq5.8here}, such that taking Fourier transforms leads to
\begin{displaymath}
-i\phi_{V_{0,\xi,\eta}}'(x)=-\frac{2x}{x^2+1}\phi_{V_{0,\xi,\eta}}(x).
\end{displaymath}
This yields $\smash{\phi_{V_{0,\xi,\eta}}}(x)=(x^2+1)^{-1}$, from which the exact distribution of $V_{0,\xi,\eta}$ is readily obtained by Fourier inversion. Alternatively, one could also observe that $\EE V_{0,\xi,\eta}^2<\infty$ by~\eqref{eq-condsecondmoment} and find the distribution of $V_{0,\xi,\eta}$ from solving~\eqref{eq-diffeqchar}, or equivalently (4.8) in~\cite{BehmeLindner2015}, for the given characteristics and performing a Fourier inversion of the solution.}
\end{remark}

%
%

\section{Proofs for Section~\ref{s5}}\label{s6}
The proof of Theorem~\ref{main} consists of several steps which are shown as separate lemmas. First, the left-hand side of~\eqref{eq-invmeasure} is rewritten to a suitable form.

%
%
\begin{lemma}\label{rewriting1}
Under the assumptions of Theorem \ref{main} we have for every $f\in C_c^{\infty}(\RR)$ that
\begin{displaymath}
\int_{\RR}A^{\widetilde{V}}f(x)\mu(\dx x)=\int_{\RR}f''(z)G_1(\dx z)+\int_{\RR}f'(z)G_2(\dx z),
\end{displaymath}
where the individual contributions are given by
\begin{align*}
G_1(\dx z)&=\Bigl(\frac{1}{2}\sigma_{\eta}^2+\frac{1}{2}z^2\sigma_{\widetilde{U}}^2\Bigr)\mu(\dx z)+(S_{\eta}\ast\mu)(z)\dx z\\
&\quad+\Big(\I_{\{z>0\}}\int_0^{\infty}xS_{\widetilde{U}}(\tfrac{z}{x})\mu(\dx x)+\I_{\{z<0\}}\int_{-\infty}^0|x|S_{\widetilde{U}}(\tfrac{z}{x})\mu(\dx x)\Big)\dx z\\
G_2(\dx z)&=\bigl(\gamma_{\eta}+z\gamma_{\widetilde{U}}\bigr)\mu(\dx z)+(B_{\eta}\ast\mu)(z)\dx z\\
&\quad+\int_{0+}^zB_{\widetilde{U}}(\tfrac{z}{x})\mu(\dx x)\dx z
\end{align*}
with the functions $B_{\eta}$, $B_{\widetilde{U}}$, $S_{\eta}$, and $S_{\widetilde{U}}$ given as in Equations~\eqref{def-B-eta} to \eqref{def-S-U}.
\end{lemma}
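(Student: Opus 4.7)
The plan is to insert the representation of the generator in terms of the characteristics of $\widetilde U$ given in Remark~\ref{remark-alternative-forms}(ii) into $\int_\RR \cA^{\widetilde V}f(x)\,\mu(\dx x)$ and, for each of the five summands, sort its contribution into either the $f''$ bucket (which will constitute $G_1$) or the $f'$ bucket (which will constitute $G_2$). The Gaussian terms $\tfrac12\sigma_\eta^2 f''(x)$ and $\tfrac12 x^2\sigma_{\widetilde U}^2 f''(x)$ go directly into $G_1$, and the drift terms $\gamma_\eta f'(x)$ and $x\gamma_{\widetilde U}f'(x)$ go directly into $G_2$. The real work lies in the two jump integrals.

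For the jump integrals the tool is Taylor's theorem in the form
\[
f(x+h)-f(x)-hf'(x)=\int_0^h(h-u)f''(x+u)\,\dx u,\qquad f(x+h)-f(x)=\int_0^h f'(x+u)\,\dx u,
\]
applied with $h=y$ in the $\eta$-integral and with $h=xy$ in the $\widetilde U$-integral, using the convention $\int_0^h=-\int_h^0$ when $h<0$. A change of variable $z=x+u$ followed by Fubini (justified by Lemmas~\ref{lem-functions} and~\ref{lem-functions2} together with the compact support of $f$) converts each piece into an integral of the form $\int f''(z)(\cdots)\,\dx z$ or $\int f'(z)(\cdots)\,\dx z$. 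For the $\eta$-integral this is elementary: the small-jump Taylor remainder produces $(S_\eta\ast\mu)(z)\,\dx z$ in $G_1$ and the large-jump part produces $(B_\eta\ast\mu)(z)\,\dx z$ in $G_2$, the $\min/\max$ cut-offs at $\pm 1$ in the definitions of $S_\eta$ and $B_\eta$ being exactly what is needed to merge the $y>0$ and $y<0$ sub-cases into a single convolution.

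For the $\widetilde U$-integral the multiplicative structure requires more careful bookkeeping. The decisive observation is that $\nu_{\widetilde U}((-\infty,-1))=0$: indeed $\cE(U)=\re^{-\xi}>0$ forces $\nu_U((-\infty,-1])=0$, and the killing adds a pure atom at $-1$ which is already captured by the small-jump regime $|y|\le 1$. Consequently the large-jump part only involves $y>1$, and splitting the $\mu$-integration by the sign of $x$ leaves two cases: $x>0,\,y>1$ contributes $+\int_0^z B_{\widetilde U}(z/x)\,\mu(\dx x)$ for $z>0$, while $x<0,\,y>1$ contributes $-\int_z^0 B_{\widetilde U}(z/x)\,\mu(\dx x)$ for $z<0$ (the sign coming from $x(1+y)<x$). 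Both are encoded uniformly as $\int_{0+}^z B_{\widetilde U}(z/x)\,\mu(\dx x)$ via the paper's convention $\int_{0+}^z=-\int_{z+}^0$ for $z<0$, matching the third summand of $G_2$. An analogous sign analysis for the small-jump Taylor remainder, now weighted by $xy-u$, produces the $\I_{\{z>0\}}\int_0^\infty xS_{\widetilde U}(z/x)\,\mu(\dx x)$ and $\I_{\{z<0\}}\int_{-\infty}^0 |x|S_{\widetilde U}(z/x)\,\mu(\dx x)$ pieces of $G_1$, with the factor $|x|$ arising from the substitution $u\leftrightarrow z$ combined with the orientation of the integration interval.

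The main obstacle is the sign and domain bookkeeping in the multiplicative $\widetilde U$-step, in particular the verification that the a priori present sign-flipping configurations with $y<-1$ drop out because $\nu_{\widetilde U}((-\infty,-1))=0$, and the checking that the boundary values at $z/x\in\{1,2\}$ are consistent with the piecewise definitions of $B_{\widetilde U}$ and $S_{\widetilde U}$. Once this has been settled, the applications of Fubini are routine: $\mu$ is a probability measure, $f'$ and $f''$ are bounded with compact support, and the four auxiliary functions $B_\eta,B_{\widetilde U},S_\eta,S_{\widetilde U}$ enjoy the boundedness and integrability properties collected in Lemma~\ref{lem-functions}, so that all the switches of integration order producing $G_1$ and $G_2$ are legitimate.
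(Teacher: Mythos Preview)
Your proposal is correct and follows essentially the same route as the paper: decompose the generator via the $\widetilde U$-characteristics, send the Gaussian and drift pieces directly to $G_1$ and $G_2$, and handle the jump integrals by the first-order Taylor identity for the large jumps and the integral-remainder form for the small jumps, followed by Fubini. The only cosmetic difference is that the paper writes the Taylor remainders in the $t$-variable (e.g.\ $\int_x^{x+y} f''(t)(x+y-t)\,\dx t$) rather than your $u$-variable, and carries out the Fubini justifications for the $S_{\widetilde U}$ term by explicit case-splitting in the proof itself rather than by appeal to Lemmas~\ref{lem-functions} and~\ref{lem-functions2}; the underlying estimates are the same.
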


\begin{proof}
By linearity, we can split $\mathcal{A}^{\widetilde{V}}f(x)$ according to~\eqref{eq-genkilledgou} and rewrite the corresponding integrals separately. Firstly, for the terms originating from the Gaussian and drift components it follows that
\begin{align*}
&\int_{\RR}\Bigl(\frac{1}{2}\sigma_{\eta}^2f''(x)+\gamma_{\eta}f'(x)+\frac{1}{2}x^2f''(x)\sigma_{\widetilde{U}}^2+xf'(x)\gamma_{\widetilde{U}}\Bigr)\mu(\dx x)\\
&\quad=\int_{\RR}f''(x)\Bigl(\frac{1}{2}\sigma_{\eta}^2+\frac{1}{2}x^2\sigma_{\widetilde{U}}^2\Bigr)\mu(\dx x)+\int_{\RR}f'(x)\Bigl(\gamma_{\eta}+x\gamma_{\widetilde{U}}\Bigr)\mu(\dx x)
\end{align*}
such that their contributions to $G_1$ and $G_2$ are readily identified. For the terms corresponding to the jump parts of the processes, the integrals with respect to the L\'{e}vy measure are split according to the value of the indicator function in the integrand. Starting with the contribution of the big jumps of $\eta$, we find for $y>1$ that
\begin{align*}
\int_{\RR}\int_{1+}^{\infty}\big(f(x+y)-f(x)\big)\nu_{\eta}(\dx y)\mu(\dx x)&=\int_{\RR}\int_{1+}^{\infty}\int_x^{x+y}f'(t)\dx t\nu_{\eta}(\dx y)\mu(\dx x)\\
&=\int_{\RR}f'(t)\int_{-\infty}^t\nu_{\eta}\big((\max\{t-x,1\},\infty)\big)\mu(\dx x)\dx t,
\end{align*}
where interchanging the order of integration is allowed due to the compact support of $f'$ and the involved measures being finite. A similar calculation applies if $y<-1$. Using the function $B_{\eta}$ defined in~\eqref{def-B-eta}, the term reads as
\begin{align*}
\int_{\RR}\int_{\RR\setminus[-1,1]}\big(f(x+y)-f(x)\big)\nu_{\eta}(\dx y)\mu(\dx x)&=\int_{\RR}f'(t)\int_{\RR}B_{\eta}(t-x)\mu(\dx x)\dx t\\
&=\int_{\RR}f'(t)(B_{\eta}\ast\mu)(t)\dx t.
\end{align*}
The big jumps of $\smash{{\widetilde{U}}}$ are treated in the same way, although the result cannot be interpreted as a linear convolution here. For $x>0$ it follows that
\begin{align*}
\int_{0+}^{\infty}\int_{1+}^{\infty}\big(f(x+xy)-f(x)\big)\nu_{\widetilde{U}}(\dx y)\mu(\dx x)=\int_0^{\infty}f'(t)\int_{0+}^t\nu_{\widetilde{U}}\big((\max\{\tfrac{t}{x}-1,1\},\infty)\big)\mu(\dx x)\dx t,
\end{align*}
and the calculation for $x<0$ is again similar. Using the function $B_{\widetilde{U}}$ introduced in~\eqref{def-B-U} now yields the desired form as
\begin{align*}
&\int_{\RR}\int_{1+}^{\infty}\big(f(x+xy)-f(x)\big)\nu_{\widetilde{U}}(\dx y)\mu(\dx x)=\int_{\RR}f'(t)\int_{0+}^tB_{\widetilde{U}}(\tfrac{t}{x})\mu(\dx x)\dx t.
\end{align*}
Note that the argument of $B_{\widetilde{U}}$ is always greater or equal to one due to $t$ and $x$ being of the same sign with $|x|\leq|t|$. The approach to the terms corresponding to the small jumps of $\eta$ and $\smash{\widetilde{U}}$, respectively, is similar. However, we obtain a contribution to~$G_1$ instead of~$G_2$ here. For $\eta$, using the Taylor formula, this leads to
\begin{align*}
&\int_{\RR}\int_{[-1,1]}\big(f(x+y)-f(x)-yf'(x)\big)\nu_{\eta}(\dx y)\mu(\dx x)\\
&\quad=\int_{\RR}\int_{[-1,1]}\int_x^{x+y}f''(t)(x+y-t)\dx t\nu_{\eta}(\dx y)\mu(\dx x).
\end{align*}
A direct computation similar to Lemma~\ref{lem-functions} shows, since $|f''|$ is compactly supported and thus bounded by a constant, that
\begin{align*}
\Big|\int_x^{x+y}|f''(t)(x+y-t)|\dx t\Big|\leq\frac{Cy^2}{2}.
\end{align*}
Thus, Fubini's theorem is applicable and we find for $y>0$ that
\begin{align*}
&\int_{\RR}\int_{(0,1]}\big(f(x+y)-f(x)-yf'(x)\big)\nu_{\eta}(\dx y)\mu(\dx x)\\
&\quad=\int_{\RR}f''(t)\int_{-\infty}^t\int_{t-x}^{\infty}\big(y-(t-x)\big)\nu_{\eta}\big|_{[-1,1]}(\dx y)\mu(\dx x)\dx t,
\end{align*}
with a similar calculation holding for $y<0$. Adding both terms, one obtains
\begin{align*}
&\int_{\RR}\int_{[-1,1]}\big(f(x+y)-f(x)-yf'(x)\big)\nu_{\eta}(\dx y)\mu(\dx x)=\int_{\RR}f''(t)(S_{\eta}\ast\mu)(t)\dx t,
\end{align*}
where the function $S_{\eta}$ is taken from~\eqref{def-S-eta}. For the last term involving the small jumps of~$\smash{\widetilde{U}}$, it follows similarly that
\begin{align*}
&\int_{\RR}\int_{[-1,1]}\big(f(x+xy)-f(x)-xyf'(x)\big)\nu_{\widetilde{U}}(\dx y)\mu(\dx x)\\&\quad=\int_{\RR}\int_{[-1,1]}\int_x^{x+xy}f''(t)(x+xy-t)\dx t\nu_{\widetilde{U}}(\dx y)\mu(\dx x).
\end{align*}
As $f$ has compact support, there is some~$R>0$ such that $\mathrm{supp}(f)\subseteq[-R,R]$. Let now~$x,y>0$ and denote the set ${\mathrm{supp}(f)\cap[x,x+xy]}$ by $M=M_{x,y}$. As $M=\emptyset$ if $x>R$ and~$|f''|$ is bounded by some constant $C$, it follows that
\begin{align*}
&\int_{0+}^1\int_0^{\infty}\int_x^{x+xy}|f''(t)(x+xy-t)|\dx t\mu(\dx x)\nu_{\widetilde{U}}(\dx y)\leq \frac{CR^2}{2}\int_{(0,1]}y^2\nu_{\widetilde{U}}(\dx y)<\infty
\end{align*}
with a similar calculation as in Lemma~\ref{lem-functions2}. When considering $x<0$, $R$ is replaced by~$-R$. If $y<0$ and $x>0$, we can split the interval $[-1,0)$ at some intermediate point~$y_0$, say~$y_0=\frac{1}{2}$, and estimate the respective integrals separately. For $y\in(-\frac{1}{2},0)$, observe that~$M=\mathrm{supp}(f)\cap[x+xy,x]=\emptyset$ if $x>2R$ as $x+xy>\frac{x}{2}$ for the given values of $y$. Thus, we can use similar estimates as for $y>0$. For $y\in[-1,-\frac{1}{2}]$, note that $M\subseteq[0,R]$ and that, therefore, $x(1+y)\in M$ only if $x(1+y)\leq R$. This yields
\begin{align*}
&C\int_{[-1,-\frac{1}{2}]}\int_0^{\infty}\int_M(t-x-xy)\dx t\mu(\dx x)\nu_{\widetilde{U}}(\dx y)\leq \frac{C R^2}{2}\nu_{\widetilde{U}}([-1,-\tfrac{1}{2}])<\infty,
\end{align*}
due to $[-1,-\frac{1}{2}]$ being bounded away from zero. Again, similar arguments are applicable for negative values of $x$, i.e. when~$y<0$ and~$x<0$ yielding integrability in the last case. Interchanging the order of integration and rewriting the term to include $S_{\widetilde{U}}$ as defined in~\eqref{def-S-U} now leads to
\begin{align*}
&\int_0^{\infty}\int_0^1\int_x^{x+xy}f''(t)(x+xy-t)\dx t\nu_{\widetilde{U}}(\dx y)\mu(\dx x)\\
& \quad\quad+\int_0^{\infty}\int_{-1}^0\int_{x+xy}^xf''(t)(t-x-xy)\dx t\nu_{\widetilde{U}}(\dx y)\mu(\dx x)\\
&\quad=\int_0^{\infty}f''(t)\int_0^{\infty}xS_{\widetilde{U}}(\tfrac{t}{x})\mu(\dx x)\dx t.
\end{align*}
As the remaining two terms yield a similar result with the opposite sign, the complete term can be rewritten as
\begin{align*}
&\int_{\RR}\int_{[-1,1]}\big(f(x+xy)-f(x)-xyf'(x)\big)\nu_{\widetilde{U}}(\dx y)\mu(\dx x)\\
&\quad=\int_{\RR}f''(t)\Big(\I_{\{t>0\}}\int_0^{\infty}xS_{\widetilde{U}}(\tfrac{t}{x})\mu(\dx x)-\I_{\{t<0\}}\int_{-\infty}^0xS_{\widetilde{U}}(\tfrac{t}{x})\mu(\dx x)\Big)\dx t.
\end{align*}
Summing up the individual contributions now yields $G_1$ and $G_2$ as claimed.
\end{proof}

\begin{lemma}\label{rewriting2}
Under the assumptions of Theorem \ref{main} we have for all $f\in C_c^{\infty}(\RR)$ that
\begin{displaymath}
\int_{\RR}f''(z)G_1(\dx z)+\int_{\RR}f'(z)G_2(\dx z)=\int_{\RR}f''(z)G(\dx z),
\end{displaymath}
where $G$ can be identified with a distribution in the sense of Schwartz and is given by
\begin{align*}
G(\dx z)&=\Bigl(\frac{1}{2}\sigma_{\eta}^2+\frac{1}{2}z^2\sigma_{\widetilde{U}}^2\Bigr)\mu(\dx z)+(S_{\eta}\ast\mu)(z)\dx z\\
&\quad+\Big(\I_{\{z>0\}}\int_0^{\infty}xS_{\widetilde{U}}(\tfrac{z}{x})\mu(\dx x)+\I_{\{z<0\}}\int_{-\infty}^0|x|S_{\widetilde{U}}(\tfrac{z}{x})\mu(\dx x)\Big)\dx z\\
&\quad-\int_{0+}^z\big(\gamma_{\eta}+x\gamma_{\widetilde{U}}\big)\mu(\dx x)\dx z-\int_{0+}^z(B_{\eta}\ast\mu)(x)\dx x\dx z\\
&\quad-\int_{0+}^z\int_{0+}^tB_{\widetilde{U}}(\tfrac{t}{x})\mu(\dx x)\dx t\dx z,
\end{align*}
with the functions $S_{\eta}$, $S_{\widetilde{U}}$, $B_{\eta}$ and $B_{\widetilde{U}}$ as defined in Equations~\eqref{def-B-eta} to~\eqref{def-S-U}.
\end{lemma}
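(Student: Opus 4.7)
The strategy is to perform distributional integration by parts on the $\int_{\RR} f'(z)\,G_2(\dx z)$ term, rewriting it as $-\int_{\RR} f''(z)\,H(z)\dx z$ for a suitable locally integrable function $H$, and then identifying $G=G_1-H(z)\dx z$. Concretely, using the convention $\int_{0+}^z = -\int_{z+}^0$ for $z<0$ established in Section~\ref{s2}, define the càdlàg primitive
\begin{align*}
H(z):=\int_{0+}^z\bigl(\gamma_\eta+x\gamma_{\widetilde{U}}\bigr)\mu(\dx x)+\int_{0+}^z(B_\eta\ast\mu)(x)\dx x+\int_{0+}^z\int_{0+}^tB_{\widetilde{U}}(\tfrac{t}{x})\mu(\dx x)\dx t,
\end{align*}
so that $H$ is the signed cumulative distribution of $G_2$ normalized to vanish at $0$. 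Then $G(\dx z)=G_1(\dx z)-H(z)\dx z$ is precisely the formula claimed in the lemma.

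The first key step is to justify that $\int_{\RR}f'(z)\,G_2(\dx z)=-\int_{\RR}f''(z)\,H(z)\dx z$ for every $f\in C_c^\infty(\RR)$. Since $f'$ and $f''$ have common compact support contained in some interval $[-R,R]$, I would split the integral at $0$ and apply the integration-by-parts formula for Stieltjes integrals against the càdlàg function $H$ on $[0,R]$ and $[-R,0]$ separately. The boundary terms at $\pm R$ vanish because $f'(\pm R)=0$, and the contributions at $0$ from the two halves cancel by the normalization $H(0-)=H(0)=0$. Writing out each of the three summands of $H$ separately and applying Fubini on $[-R,R]$ (the interchange being justified by the local integrability statements collected in Lemma~\ref{lem-functions} together with the boundedness of $f''$) produces exactly $-\int_\RR f''(z)H(z)\dx z$.

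The second step is to verify that the right-hand side indeed defines a Schwartz distribution on $\RR$, i.e.\ that $G$ pairs continuously with test functions via $f\mapsto\int f''(z)\,G(\dx z)$. For this, I would check that $G$ is a locally finite signed measure: the $G_1$-part is locally finite because $\mu$ is a probability measure, $S_\eta\ast\mu$ is integrable by Lemma~\ref{lem-functions}, and the $S_{\widetilde{U}}$-integrals define a locally finite measure by Lemma~\ref{lem-functions2}; the additional density $H(z)\dx z$ is locally integrable since the first summand of $H$ is locally bounded (being the integral of $x\mapsto\gamma_\eta+x\gamma_{\widetilde{U}}$ against the probability measure $\mu$ over bounded sets), while the remaining two summands are locally integrable by the final assertion of Lemma~\ref{lem-functions}. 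A locally finite signed measure canonically induces a distribution of order zero, which gives the identification claimed.

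The only mildly delicate point will be keeping track of signs and of the $z<0$ branches of the $S_{\widetilde{U}}$ and $H$ integrals. The notational convention $\int_{0+}^z=-\int_{z+}^0$ for $z<0$ together with the definitions \eqref{def-B-eta}--\eqref{def-S-U} (which already encode the sign conventions on both sides of the origin) should make this bookkeeping automatic, so that the integration by parts on $[-R,0]$ produces precisely the $\I_{\{z<0\}}$ terms appearing in $G$. Once this is verified, the lemma follows from Lemma~\ref{rewriting1} by simply collecting terms.
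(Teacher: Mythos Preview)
Your proposal is correct and follows essentially the same route as the paper: the paper also observes that $G_1$ is already of the desired locally finite form (via Lemmas~\ref{lem-functions} and~\ref{lem-functions2}), defines the c\`adl\`ag primitive $z\mapsto\int_{0+}^z G_2(\dx w)$ (your $H$), and applies Stieltjes integration by parts to obtain $\int_{\RR}f'(z)\,G_2(\dx z)=-\int_{\RR}f''(z)\,H(z)\dx z$, after which local finiteness of $G$ follows from Lemma~\ref{lem-functions}. Your write-up is slightly more explicit about the boundary-term bookkeeping at $0$ and $\pm R$, but no new idea is involved.
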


\begin{proof}
The term involving $G_1(\dx z)$ in Lemma~\ref{rewriting1} is already of the desired form, and, by Lemmas~\ref{lem-functions} and~\ref{lem-functions2}, it follows that $G_1(\dx z)$ yields finite values when evaluated over compact subsets of $\RR$. For the terms included in $G_2$, observe that $z\mapsto\int_{0+}^zG_2(\dx w)$ is càdlàg on $\RR$ and locally of bounded variation. Partial integration then shows
\begin{align*}
\int_{\RR}f'(z)G_2(\dx z)&=-\int_{\RR}f''(z)\int_{0+}^zG_2(\dx w)\dx z\\
&=-\int_{\RR}f''(z)\Big(\int_{0+}^z\bigl(\gamma_{\eta}+x\gamma_{\widetilde{U}}\bigr)\mu(\dx x)+\int_{0+}^z(B_{\eta}\ast\mu)(x)\dx x\\
&\quad+\int_{0+}^z\int_{0+}^tB_{\widetilde{U}}(\tfrac{t}{x})\mu(\dx x)\dx t\dx z\Bigr)\dx z,
\end{align*}
This contribution to $G$ also yields finite values when evaluated over compact subsets of $\RR$ by Lemma~\ref{lem-functions}. Summing up the terms, we find that $G$ is of the claimed form and locally finite, which allows to interpret the measure as a distribution in the sense of Schwartz.
\end{proof}

The following lemma now allows to identify the distribution $G$ through solving an ordinary differential equation.
\begin{lemma}\label{odesolve}
The distribution $G(\dx z)$ in Lemma \ref{rewriting2} is of the form $C_1z\dx z+C_2\dx z$ for some constants $C_1,C_2\in\RR$.
\end{lemma}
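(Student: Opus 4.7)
The plan is to translate the identity from Corollary~\ref{cor-testfunctions} into a statement about the distributional second derivative of $G$ and then invoke a standard fact from Schwartz's theory of distributions. First, by Corollary~\ref{cor-testfunctions}, equation~\eqref{eq-invmeasure} holds for every $f\in C_c^\infty(\RR)$, i.e.
\begin{displaymath}
\int_{\RR}\cA^{\widetilde{V}}f(x)\,\mu(\di x)=0.
\end{displaymath}
Combining this with Lemmas~\ref{rewriting1} and~\ref{rewriting2}, we obtain
\begin{displaymath}
\int_{\RR}f''(z)\,G(\di z)=0\qquad\text{for all } f\in C_c^\infty(\RR).
\end{displaymath}

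Since $G$ is locally finite by Lemmas~\ref{lem-functions} and~\ref{lem-functions2}, it defines a distribution $T_G$ on $\RR$ via $\langle T_G,\phi\rangle=\int_\RR \phi(z)\,G(\di z)$. The display above reads $\langle T_G,f''\rangle=0$ for every test function $f$, which by the definition of the distributional derivative means $\langle T_G'',f\rangle=0$ for all $f\in C_c^\infty(\RR)$, i.e.\ $T_G''=0$ in $\mathcal{D}'(\RR)$.

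It is a classical fact that the only distributions $T$ on $\RR$ with $T'=0$ are constants. To recall the argument: fix $\phi_0\in C_c^\infty(\RR)$ with $\int\phi_0=1$. For an arbitrary $\phi\in C_c^\infty(\RR)$ the function $\phi-(\int\phi)\phi_0$ has zero integral, hence possesses a primitive $\psi\in C_c^\infty(\RR)$, so
\begin{displaymath}
\langle T,\phi\rangle=\bigl(\textstyle\int\phi\bigr)\langle T,\phi_0\rangle+\langle T,\psi'\rangle=\bigl(\textstyle\int\phi\bigr)\langle T,\phi_0\rangle,
\end{displaymath}
identifying $T$ with the constant $\langle T,\phi_0\rangle$. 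Applying this twice, first to $T_G'$ and then to $T_G-C_1\cdot z$, we deduce the existence of $C_1,C_2\in\RR$ such that $T_G=C_1 z+C_2$ in $\mathcal{D}'(\RR)$. Since locally finite measures on $\RR$ are uniquely determined by their action on $C_c^\infty(\RR)$, this identity lifts to the claimed equality $G(\di z)=C_1 z\,\di z+C_2\,\di z$ of locally finite measures.

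The only mildly delicate point is the density argument in the last step, but it is routine: if $\nu$ is a signed locally finite measure with $\int\phi\,d\nu=0$ for all $\phi\in C_c^\infty(\RR)$, then by approximation the same holds for all continuous compactly supported functions, hence $\nu\equiv 0$. No further obstacles are anticipated.
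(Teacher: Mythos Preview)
Your argument is correct and follows essentially the same route as the paper: combine the invariance identity with the rewriting lemmas to obtain $\int f''\,G(\di z)=0$ for all test functions, interpret this as $G''=0$ in $\mathcal{D}'(\RR)$, and then solve the distributional ODE. The paper simply cites a reference for the antiderivative step, whereas you spell out the standard argument and the measure identification; both are fine and nothing further is needed.
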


\begin{proof}
By Equation \eqref{eq-invmeasure} and Lemma \ref{rewriting2} it holds for all $f\in C_c^{\infty}(\RR)$ that
\begin{displaymath}
\int_{\RR}A^{\widetilde{V}}f(x)\mu(\dx x)=\int_{\RR}f''(z)G(\dx z)=\langle f'',G\rangle=0,
\end{displaymath}
where $\langle\cdot,\cdot\rangle$ denotes dual pairing. From the definition of the distributional derivative it now follows that
\begin{displaymath}
\langle f'',G\rangle=-\langle f',G'\rangle=\langle f,G''\rangle=0.
\end{displaymath}
As the above holds for all test functions $f$ and $\RR$ is an open set, we can conclude that~$G''$ must be the zero distribution. Using results on the antiderivative of distributions, e.g. from~\cite[Thm.~4.3]{DuistermaatKolk2010}, we find that the solution is given by~${G(\dx z)=C_1z\dx z+C_2\dx z}$ and is unique up to the choice of constants.
\end{proof}

Lastly, we note the following lemma to identify one of the constants.
\begin{lemma}\label{constants}
The distribution $G(\dx z)$ in Lemma~\ref{rewriting2} satisfies
\begin{equation}\label{limit-constants1}
\frac{1}{\ln(t)}\int_1^t\frac{1}{z^2}G(\dx z)\rightarrow0,\ t\rightarrow\infty.
\end{equation}
\end{lemma}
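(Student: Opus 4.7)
My plan is to decompose the signed measure $G$ from Lemma~\ref{rewriting2} into its seven constituent pieces and show that each, after weighting by $z^{-2}$ and integrating over $[1,t]$, is $o(\ln t)$ as $t\to\infty$; summing the seven contributions then gives~\eqref{limit-constants1}. Four of the pieces---the two Gaussian-type terms $\tfrac{1}{2}\sigma_\eta^2\mu(\dx z)$ and $\tfrac{1}{2}z^2\sigma_{\widetilde{U}}^2\mu(\dx z)$, the convolution $(S_\eta\ast\mu)(z)\dx z$, and the drift contribution $-\int_{0+}^z(\gamma_\eta+x\gamma_{\widetilde{U}})\mu(\dx x)\dx z$---give even $O(1)$ contributions: the first two since $\mu$ is a probability measure, the third since $\|S_\eta\ast\mu\|_1\leq\|S_\eta\|_1<\infty$ by Lemma~\ref{lem-functions}, and the fourth after swapping the order of integration via Fubini, which reduces it to $-\int_{(0,t]}(\gamma_\eta+x\gamma_{\widetilde{U}})[\max(x,1)^{-1}-t^{-1}]\mu(\dx x)$, uniformly bounded in $t$.

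For the two antiderivative-type pieces involving $B_\eta$ and $B_{\widetilde{U}}$ I would use integration by parts in $z$. Setting $H(z):=\int_{0+}^z(B_\eta\ast\mu)(x)\dx x$, the identity $\int_1^t H(z)z^{-2}\dx z=H(1)-H(t)/t+\int_1^t(B_\eta\ast\mu)(z)z^{-1}\dx z$ reduces the problem to the decay of $B_\eta\ast\mu$ at infinity. Since $B_\eta$ is uniformly bounded by $\nu_\eta(\RR\setminus[-1,1])$ and vanishes at $\pm\infty$, dominated convergence gives $(B_\eta\ast\mu)(z)\to 0$ as $|z|\to\infty$; the substitution $s=\ln z$ combined with Ces\`aro averaging then yields $\tfrac{1}{\ln t}\int_1^t(B_\eta\ast\mu)(z)z^{-1}\dx z\to 0$. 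The $B_{\widetilde{U}}$-term is handled analogously, after verifying (again by dominated convergence, using $B_{\widetilde{U}}\leq\nu_{\widetilde{U}}([1,\infty))<\infty$ and $B_{\widetilde{U}}(w/x)\to 0$ pointwise for each fixed $x>0$) that $K(w):=\int_{0+}^w B_{\widetilde{U}}(w/x)\mu(\dx x)\to 0$ as $w\to\infty$.

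The main obstacle is the term $\I_{\{z>0\}}\int_0^\infty xS_{\widetilde{U}}(z/x)\mu(\dx x)\dx z$; its $\I_{\{z<0\}}$ counterpart contributes nothing on $[1,t]$. Fubini and the substitution $u=z/x$ transform its $z^{-2}$-weighted integral on $[1,t]$ into $\int_{(0,\infty)}J_t(x)\mu(\dx x)$ with $J_t(x):=\int_{1/x}^{\min(t/x,2)}S_{\widetilde{U}}(u)/u^2\dx u$ (the upper cut-off at $2$ comes from $S_{\widetilde{U}}\equiv 0$ on $[2,\infty)$). For $x\in(0,1/2]$ the integration range is empty; for $x\in[1/2,2]$ it is uniformly bounded by $4\int_{1/2}^2 S_{\widetilde{U}}(u)\dx u<\infty$. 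For $x>2$, I split at $u=1/2$ and apply the pointwise estimate $S_{\widetilde{U}}(u)\leq u[q+C_0/(1-u)^2]$ valid for $u\in(0,1)$, where $C_0:=\int_{(-1,0]}y^2\nu_U(\dx y)<\infty$ follows from the L\'evy property together with $|y|\geq 1-u$ on the effective integration range in the definition of $S_{\widetilde{U}}$. On $(1/x,1/2)$ this yields $\int_{1/x}^{1/2}S_{\widetilde{U}}(u)/u^2\dx u\leq(q+4C_0)\ln(x/2)$, while on $(1/2,\min(t/x,2))$ the contribution is uniformly bounded. Combining these estimates gives $J_t(x)\leq(q+4C_0)\ln(\min(x,t))+C_2$ for all $x$, hence $J_t(x)/\ln t$ is dominated by the constant $q+4C_0+C_2$ (integrable against the probability measure $\mu$) for $t\geq e$ and converges pointwise to $0$ for every fixed $x$; dominated convergence finishes the proof.
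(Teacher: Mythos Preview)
Your proof is correct and follows the same overall scheme as the paper's---decompose $G$ into its summands and show that each contributes $o(\ln t)$---but the technical execution differs in two places. For the drift piece the paper uses the observation $z^{-1}\int_0^z x\,\mu(\dx x)\to 0$ together with an $\varepsilon$-splitting; your Fubini reduction to $-\int_{(0,t]}(\gamma_\eta+x\gamma_{\widetilde{U}})[\max(x,1)^{-1}-t^{-1}]\,\mu(\dx x)$ is shorter, since $|x[\max(x,1)^{-1}-t^{-1}]|\le 2$ on $(0,t]$ gives a uniform bound directly. For the hardest piece, the $S_{\widetilde{U}}$-term, the paper splits the $x$-domain into $\{x>2z\}$ (where a crude bound yields $o(z)$) and $\{z/2\le x\le 2z\}$ (where a further split around the singularity $z=x$ gives $O(1)$ after swapping the order of integration). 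You instead apply Fubini first, pass to the scaling variable $u=z/x$, and exploit the explicit pointwise bound $S_{\widetilde{U}}(u)\le u[q+C_0/(1-u)^2]$ on $(0,1)$ together with dominated convergence. Both routes rely on $S_{\widetilde{U}}\equiv 0$ on $[2,\infty)$ and the integrability of $S_{\widetilde{U}}$ near $1$; yours trades the paper's ad hoc range-splitting for a single pointwise estimate plus a uniform logarithmic domination $J_t(x)\le (q+4C_0)\ln(\min(x,t))+C_2$, which is arguably more transparent. The $B_\eta$ and $B_{\widetilde{U}}$ terms are handled essentially identically (the paper phrases the Ces\`aro step as ``$o(z)$ plus the drift-type argument'', you make the integration by parts explicit).
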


\begin{proof}
Using linarity, we can once more treat every summand of $G$ separately. First, we find for the contribution of the Gaussian parts of $\eta$ and $\smash{\widetilde{U}}$ that
\begin{align*}
\frac{1}{\ln(t)}\int_1^t\Big(\frac{1}{z^2}\frac{1}{2}\sigma_{\eta}^2+\frac{1}{2}\sigma_{\widetilde{U}}^2\Big)\mu(\dx x)\leq\frac{1}{\ln(t)}\Big(\frac{1}{2}\sigma_{\eta}^2+\frac{1}{2}\sigma_{\widetilde{U}}^2\Big)\mu([1,t])\rightarrow0,\ t\rightarrow\infty,
\end{align*}
yielding the desired value of the limit as $\mu$ is a finite measure. For the contribution of  the drift first observe that
\begin{align*}
\lim_{z\rightarrow\infty}\frac{1}{z}\int_0^zx\mu(\dx x)=0
\end{align*}
i.e. for every $\epsilon>0$ we can find a value $R_{\epsilon}$ such that $\frac{1}{z}\int_0^zx\mu(\dx x)\leq\epsilon$ if $z>R_{\epsilon}$. This yields
\begin{align*}
\Big|\int_1^t\frac{1}{z^2}\int_0^z\gamma_{\widetilde{U}}x\mu(\dx x)\dx z\Big|\leq|\gamma_{\widetilde{U}}|\Big(\int_1^{R_{\epsilon}}\frac{1}{z^2}\int_0^zx\mu(\dx x)\dx z+\epsilon\int_{R_{\epsilon}}^t\frac{1}{z}\dx z\Big)
\end{align*}
which implies that
\begin{align*}
0\leq\limsup_{t\rightarrow\infty}\Big|\frac{1}{\ln(t)}\int_1^t\frac{1}{z^2}\int_0^z\big(\gamma_{\eta}+x\gamma_{\widetilde{U}}\big)\mu(\dx x)\Big|\leq|\gamma_{\widetilde{U}}|\epsilon.
\end{align*}
Since the above statement holds for every $\epsilon>0$, we can conclude that the limit is zero. For the contribution of the small jumps of $\eta$, recall that $S_{\eta}\ast\mu$ is integrable with respect to~$\lambda$ by Lemma~\ref{lem-functions}. Therefore, we find that
\begin{displaymath}
0\leq\lim_{t\rightarrow\infty}\frac{1}{\ln(t)}\int_1^t\frac{1}{z^2}(S_{\eta}\ast\mu)(z)\dx z\leq\lim_{t\rightarrow\infty}\frac{1}{\ln(t)}\int_{\RR}(S_{\eta}\ast\mu)(z)\dx z=0.
\end{displaymath}
For the summand involving $S_{\widetilde{U}}$, splitting up the inner integral leads to
\begin{align*}
\int_1^t\frac{1}{z^2}\int_0^{\infty}xS_{\widetilde{U}}(\tfrac{z}{x})\mu(\dx x)\dx z&=\int_1^t\frac{1}{z^2}\int_{\frac{z}{2}}^{2z}xS_{\widetilde{U}}(\tfrac{z}{x})\mu(\dx x)\dx z+\int_1^t\frac{1}{z^2}\int_{2z}^{\infty}xS_{\widetilde{U}}(\tfrac{z}{x})\mu(\dx x)\dx z,
\end{align*}
as $x<\frac{z}{2}$ implies that $\frac{z}{x}-1>1$ and, therefore, we have $S_{\widetilde{U}}(\frac{z}{x})=0$ in this case. Since~$S_{\widetilde{U}}(\frac{z}{x})$ is nonnegative by~\eqref{def-S-U}, a direct calculation leads to
\begin{align*}
\int_{2z}^{\infty}xS_{\widetilde{U}}(\tfrac{z}{x})\mu(\dx x)&=\int_{2z}^{\infty}\int_{-1}^{\frac{z}{x}-1}\big(z-x(y+1)\big)\nu_{\widetilde{U}}(\dx y)\mu(\dx x)\\
&=\int_{-1}^{-\frac{1}{2}}\int_{2z}^{\frac{z}{y+1}}\big(z-x(y+1)\big)\mu(\dx x)\nu_{\widetilde{U}}(\dx y)\\
&\leq\int_{-1}^{-\frac{1}{2}}\big(z-2z(y+1)\big)\int_{2z}^{\frac{z}{y+1}}\mu(\dx x)\nu_{\widetilde{U}}(\dx y)\\
&\leq z\mu([2z,\infty))\nu_{\widetilde{U}}([-1,-\tfrac{1}{2}]),
\end{align*}
which, since $\mu([2z,\infty))\rightarrow0$ as $z\rightarrow\infty$, implies that the term is in $o(z)$. We can thus apply the same reasoning as for the contribution of the drift terms and conclude that
\begin{align*}
\lim_{t\rightarrow\infty}\frac{1}{\ln(t)}\int_1^t\frac{1}{z^2}\int_{2z}^{\infty}xS_{\widetilde{U}}(\tfrac{z}{x})\mu(\dx x)\dx z=0.
\end{align*}
If $z/2\leq x\leq2z$, consider
\begin{align*}
\int_1^t\int_{\frac{z}{2}}^{2z}\frac{x}{z^2}S_{\widetilde{U}}(\tfrac{z}{x})\mu(\dx x)\dx z&=\int_{\frac{1}{2}}^{2t}\int_{\max\{\frac{x}{2},1\}}^{\min\{t,2x\}}\frac{x}{z^2}S_{\widetilde{U}}(\tfrac{z}{x})\dx z\mu(\dx x)\\
&\leq\int_{\frac{1}{2}}^{2t}\int_{\frac{x}{2}}^{x(1-\varepsilon)}\frac{x}{z^2}S_{\widetilde{U}}(\tfrac{z}{x})\dx z\mu(\dx x)+\int_{\frac{1}{2}}^{2t}\int_{x(1-\varepsilon)}^{x(1+\varepsilon)}\frac{x}{z^2}S_{\widetilde{U}}(\tfrac{z}{x})\dx z\mu(\dx x)\\
&\quad+\int_{\frac{1}{2}}^{2t}\int_{x(1+\varepsilon)}^{2x}\frac{x}{z^2}S_{\widetilde{U}}(\tfrac{z}{x})\dx z\mu(\dx x)
\end{align*}
for some $\varepsilon\in(0,1/2)$. In the case that the values of $z$ are bounded away from $z=x$, we can use~\eqref{def-S-U} to estimate $S_{\widetilde{U}}(\frac{z}{x})$ by a constant $C_{\varepsilon}>0$, implying that
\begin{align*}
\int_{x(1+\varepsilon)}^{2x}\frac{1}{z^2}S_{\widetilde{U}}(\tfrac{z}{x})\dx z\leq C_{\varepsilon}\int_{x(1+\varepsilon)}^{2x}\frac{1}{z^2}\dx z=C_{\varepsilon}\Bigl(\frac{1}{x(1+\varepsilon)}-\frac{1}{2x}\Bigr)
\end{align*}
with a similar estimate also holding for $z\in[x/2,x(1-\varepsilon)]$. If the values of $z$ are close to the singularity at $z=x$, we find that
\begin{align}
\int_{x(1-\varepsilon)}^{x(1+\varepsilon)}\frac{1}{z^2}S_{\widetilde{U}}(\tfrac{z}{x})\dx z&\leq\frac{1}{x^2(1-\varepsilon)^2}\int_{x(1-\varepsilon)}^xS_{\widetilde{U}}(\tfrac{z}{x})\dx z+\frac{1}{x^2}\int_x^{x(1+\varepsilon)}S_{\widetilde{U}}(\tfrac{z}{x})\dx z\nonumber\\
&=\frac{1}{x(1-\varepsilon)^2}\int_{-\epsilon}^0S_{\widetilde{U}}(t+1)\dx t+\frac{1}{x}\int_0^{\varepsilon}S_{\widetilde{U}}(t+1)\dx t\label{eq-limitest1}
\end{align}
by a suitable substitution. Note that both integrals are finite by the definition of $\nu_{\widetilde{U}}$, since
\begin{align}
\int_0^{\varepsilon}S_{\widetilde{U}}(t+1)\dx t&=\int_0^{\varepsilon}\int_t^1(y-t)\nu_{\widetilde{U}}(\dx y)\dx t\nonumber\\
&=\int_0^1\int_0^{\min\{\varepsilon,y\}}(y-t)\dx t\nu_{\widetilde{U}}(\dx y)\leq\int_0^1y^2\nu_{\widetilde{U}}(\dx y),\label{eq-limitest2}
\end{align}
and a similar estimate holds for $t\in[-\varepsilon,0)$. Therefore, one obtains an estimate in terms of $\frac{1}{x}$ in all cases, implying
\begin{align*}
0&\leq\lim_{t\rightarrow\infty}\frac{1}{\ln(t)}\int_1^t\frac{1}{z^2}\int_{\frac{z}{2}}^{2z}xS_{\widetilde{U}}(\tfrac{z}{x})\mu(\dx x)\dx z\leq\lim_{t\rightarrow\infty}\frac{1}{\ln(t)}\int_{\frac{1}{2}}^tx\frac{\widetilde{C}}{x}\mu(\dx x)\\
&=\widetilde{C}\lim_{t\rightarrow\infty}\frac{1}{\ln(t)}\mu([\tfrac{1}{2},t])=0
\end{align*}
with a suitable constant $\widetilde{C}$. For the term corresponding to the big jumps of $\eta$, observe that the function~$B_{\eta}$ is bounded and satisfies $\lim_{|z|\rightarrow\infty}B_{\eta}(z)=0$ by~\eqref{def-B-eta}. This implies that also~${(B_{\eta}\ast\mu)(z)\rightarrow0}$, as can be seen by partitioning the domain of integration of the convolution with respect to the values of the function $B_{\eta}$. Thus, we also find that
\begin{align*}
\lim_{|z|\rightarrow\infty}\frac{1}{z}\int_0^z(B_{\eta}\ast\mu)(x)\dx x=0,
\end{align*}
i.e. the corresponding summand in $G$ is in $o(z)$, which, in combination with the above arguments is enough to conclude that this term also does not contribute to the limit in~\eqref{limit-constants1}. Similarly, we observe that also $B_{\widetilde{U}}$ is bounded and satisfies $\lim_{|t|\rightarrow\infty}B_{\widetilde{U}}(t)=0$ as~$\nu_{\widetilde{U}}((1,\infty))<\infty$, which, together with~$\mu$ being a finite measure implies that
\begin{align*}
\lim_{z\rightarrow\infty}\int_0^zB_{\widetilde{U}}(\tfrac{z}{x})\mu(\dx x)=0
\end{align*}
by dominated convergence. Therefore, the corresponding antiderivative appearing in $G$ is in $o(z)$ as desired.
\end{proof}
%
%
Using the above lemmas, we can now prove Theorem \ref{main}.
\begin{proof}[Proof of Theorem \ref{main}]
Starting from \eqref{eq-invmeasure}, we first rewrite the left-hand side according to Lemmas \ref{rewriting1} and \ref{rewriting2} to arrive at
\begin{displaymath}
\int_{\RR}A^{\widetilde{V}}f(x)\mu(\dx x)=\int_{\RR}f''(z)G(\dx z)=0.
\end{displaymath}
Recall that this equation holds for every $f\in C_c^{\infty}(\RR)$ by Corollary~\ref{cor-testfunctions}. Using the results from Lemma \ref{odesolve}, we find that $G''$ equals the zero distribution and thus~${G=C_1z\dx z+C_2\dx z}$ for some constants $C_1,C_2\in\RR$. Identifying the equivalent expressions for $G$ from Lemma~\ref{rewriting2} and~\ref{odesolve} now yields
\begin{align}
C_1z\dx z+C_2\dx z=&\Bigl(\frac{1}{2}\sigma_{\eta}^2+\frac{1}{2}z^2\sigma_{\widetilde{U}}^2\Bigr)\mu(\dx z)+(S_{\eta}\ast\mu)(z)\dx z\nonumber\\
&\quad+\Big(\I_{\{z>0\}}\int_0^{\infty}xS_{\widetilde{U}}(\tfrac{z}{x})\mu(\dx x)+\I_{\{z<0\}}\int_{-\infty}^0|x|S_{\widetilde{U}}(\tfrac{z}{x})\mu(\dx x)\Big)\dx z\nonumber\\
&\quad-\int_{0+}^z\bigl(\gamma_{\eta}+x\gamma_{\widetilde{U}}\bigr)\mu(\dx x)\dx z-\int_{0+}^z(B_{\eta}\ast\mu)(x)\dx x\dx z\nonumber\\
&\quad-\int_{0+}^z\int_{0+}^tB_{\widetilde{U}}(\tfrac{t}{x})\mu(\dx x)\dx t\dx z.\label{equation-G}
\end{align}
In order to arrive at~\eqref{eq-mu}, the values of $C_1$ and $C_2$ have to be identified. To determine~$C_1$, observe that
\begin{equation*}
\frac{1}{\ln(t)}\int_1^t\frac{1}{z^2}(C_1z+C_2)\dx z=C_1+C_2\frac{1-\frac{1}{t}}{\ln(t)}\ \rightarrow C_1,\ t\rightarrow\infty,
\end{equation*}
such that we can give its value by applying the above transformations to both sides of~\eqref{equation-G} and letting $t\rightarrow\infty$. From Lemma~\ref{constants}, this limit is equal to zero. Renaming $K=C_2$, we arrive at~\eqref{eq-mu}.
\end{proof}

\begin{proof}[Proof of Corollary~\ref{existence-density}]
(i) and (ii), Existence: From the form of~\eqref{eq-mu}, we see that
\begin{displaymath}
\Big(\frac{1}{2}\sigma_{\eta}^2+\frac{1}{2}z^2\sigma_{\widetilde{U}}^2\Big)\mu(\dx z)=H(z)\dx z
\end{displaymath}
for some locally integrable function $H$. It follows that $\mu$ has a density $f_{\mu}$ on $\RR$ whenever $\sigma_{\eta}^2>0$ and that $\mu|_{\RR\setminus\{0\}}$ has a density $f_{\mu}$ on $\RR\setminus\{0\}$ whenever $\sigma_{\widetilde{U}}^2>0=\sigma_{\eta}^2$. In both cases, Corollary~\ref{corollary-equation-density} implies that $f_{\mu}$ must satisfy~\eqref{equation-density} for~$\lambda$-a.e. $z\in\RR$.

(iii) Since $S_{\eta}\geq0$ and $S_{\widetilde{U}}\geq0$, the term $\smash{(\frac{1}{2}\sigma_{\eta}^2+\frac{1}{2}z^2\sigma_{\widetilde{U}}^2)f_{\mu}}$ can be bounded by the right-hand side of~\eqref{equation-density}. Observe that all quantities in this bound, apart from $\I_{\{z<0\}}\gamma_{\eta}\mu(\{0\})$ if $\gamma_{\eta}\mu(\{0\})\neq0$, are continuous functions in $z$. In particular, the right-hand side of~\eqref{equation-density} is locally bounded in $z\in\RR$, continuous on $\RR\setminus\{0\}$ and, whenever $\mu(\{0\})=0$ (which is in particular satisfied if $\sigma_{\eta}^2>0$), also continuous on $\RR$. Observing further that $B_{\eta}\ast f_{\mu}$ is integrable and $B_{\widetilde{U}}$ is bounded by definition (cf.~Lemma~\ref{lem-functions}), we see that the right-hand side of~\eqref{equation-density} can be bounded by $M_1+M_2|z|$ for $z\in\RR$ and suitable constants~$M_1,M_2\geq0$, yielding the desired bound
\begin{displaymath}
\Big(\frac{1}{2}\sigma_{\eta}^2+\frac{1}{2}z^2\sigma_{\widetilde{U}}^2\Big)f_{\mu}(z)\leq M_1+M_2|z|,\quad \forall z\neq0.
\end{displaymath}

(ii), Continuity: Let $\sigma_{\widetilde{U}}^2>0$ and $\sigma_{\eta}^2\geq0$. Since the right-hand side of~\eqref{equation-density} is continuous on $\RR\setminus\{0\}$, it suffices to show that $S_{\eta}$, $S_{\eta}\ast f_{\mu}$, as well as the mappings $\smash{z\mapsto\I_{\{z>0\}}\int_0^{\infty} S_{\widetilde{U}}(\frac{z}{x})f_{\mu}(x)\dx x}$ and $\smash{z\mapsto\I_{\{z<0\}}\int_{-\infty}^0|x|S_{\widetilde{U}}(\frac{z}{x})f_{\mu}(x)\dx x}$ are continuous on \linebreak$\RR\setminus\{0\}$. Write
\begin{displaymath}
S_{\eta}(z)=\int_{\varepsilon}^{\infty}(y-z)\I_{[z,\infty)}(y)\nu_{\eta}|_{[-1,1]}(\dx y)
\end{displaymath}
for $z>\varepsilon>0$, and observe that the function $\smash{z\mapsto(y-z)\I_{[z,\infty)}(y)}$ is continuous in $z_0>\varepsilon$ for all values of $y$. Thus, an application of Lebesgue's dominated convergence theorem yields that $S_{\eta}$ is continuous in $z_0>\varepsilon$. Since $\varepsilon>0$ was arbitrary and we can apply a similar argument for $z_0<0$, it follows that $S_{\eta}$ is continuous on $\RR\setminus\{0\}$. To show that~$S_{\eta}\ast f_{\mu}$ is continuous in $z_0>0$, let $\varepsilon\in(0,z_0)$ as well as $\delta\in(0,1)$ and decompose
\begin{equation}\label{decomp-S-eta}
S_{\eta}(z)=S_{\eta}^{\delta,1}+S_{\eta}^{\delta,2},
\end{equation}
where the functions on the right-hand side are defined similar to~\eqref{def-S-eta} with $\nu_{\eta}|_{[-1,1]}$ replaced by $\nu_{\eta}|_{[-\delta,\delta]}$ or $\nu_{\eta}|_{[-1,1]\setminus[-\delta,\delta]}$ for $\smash{S_{\eta}^{\delta,1}}$ or $S_{\eta}^{\delta,2}$, respectively. Then $S_{\eta}^{\delta,1}$ and $S_{\eta}^{\delta,2}$ are continuous on $\RR\setminus\{0\}$ and $S_{\eta}^{\delta,2}$ is bounded by $\nu_{\eta}([-1,1]\setminus[-\delta,\delta])<\infty$ by definition. The latter implies that $\smash{S_{\eta}^{\delta,2}\ast f_{\mu}}$ is continuous on $\RR$ for every $\delta\in(0,1)$ (see e.g.~\cite[Thm.~14.8]{Schilling2011}). For the treatment of $\smash{S_{\eta}^{\delta,1}\ast f_{\mu}}$, recall from Lemma~\ref{lem-functions} that $S_{\eta}$ is integrable with respect to $\lambda$. Since $\smash{S_{\eta}^{\delta,1}}$ converges (point-wise) to zero as $\delta\downarrow0$ and $\smash{S_{\eta}^{\delta,1}}\leq S_{\eta}$, it follows that
\begin{displaymath}
\lim_{\delta\downarrow0}\int_{-\frac{z_0}{4}}^{\frac{z_0}{4}}S_{\eta}^{\delta,1}(z)\dx z=0
\end{displaymath}
by dominated convergence. By part (iii) of the Corollary, we can bound $f_{\mu}$ by a constant~$M_3>0$ on $[z_0/4,7z_0/4]$. For $z\in(z_0/2,3z_0/2)$ and $0<\delta<z_0/4$ we have $\smash{S_{\eta}^{\delta,1}}=0$ for $|y|>z_0/4$ and hence
\begin{displaymath}
\big(S_{\eta}^{\delta,1}\ast f_{\mu}\big)(z)=\int_{\frac{z_0}{4}}^{\frac{z_0}{4}}f_{\mu}(z-x)S_{\eta}^{\delta,1}(x)\dx x\leq M_3\int_{-\frac{z_0}{4}}^{\frac{z_0}{4}}S_{\eta}^{\delta,1}(x)\dx x.
\end{displaymath}
Choosing $\delta$ small enough, the above estimate on the right-hand side becomes arbitrarily small. Together with the previously established continuity of $\smash{S_{\eta}^{\delta,2}}$ and~\eqref{decomp-S-eta}, this shows continuity of $S_{\eta}\ast f_{\mu}$ in $z_0>0$. Applying a similar argument for $z_0<0$, we can conclude that $S_{\eta}\ast f_{\mu}$ is continuous on $\RR\setminus\{0\}$.

It remains to consider the terms involving $S_{\widetilde{U}}$. First, we establish continuity of the mapping $z\mapsto\smash{\int_0^{\infty}xS_{\widetilde{U}}(\tfrac{z}{x})f_{\mu}(x)\dx x}$ in $z_0>0$. As for~\eqref{decomp-S-eta}, let $\delta\in(0,1)$ and decompose
\begin{equation}\label{decomp-S-U}
S_{\widetilde{U}}(z)=S_{\widetilde{U}}^{\delta,1}+S_{\widetilde{U}}^{\delta,2},
\end{equation}
where the quantities $\smash{S_{\widetilde{U}}^{\delta,1}}$ and $\smash{S_{\widetilde{U}}^{\delta,2}}$ are defined similar to~\eqref{def-S-U} with $\smash{\nu_{\widetilde{U}}|_{[-1,1]}}$ replaced by $\smash{\nu_{\widetilde{U}}|_{[-\delta,\delta]}}$ or $\smash{\nu_{\widetilde{U}}|_{[-1,1]\setminus[-\delta,\delta]}}$, respectively. As in the treatment of $S_{\eta}$, a bound for $\smash{S_{\widetilde{U}}^{\delta,2}}$ is readily obtained from the definition since
\begin{displaymath}
S_{\widetilde{U}}^{\delta,2}(z)\leq\int_{-1}^{z-1}(z-1-(-1))\nu_{\widetilde{U}}|_{[-1,1]\setminus[-\delta,\delta]}(\dx y)\leq z\nu_{\widetilde{U}}([-1,1]\setminus[-\delta,\delta])
\end{displaymath}
for $z\in[0,1]$ and, setting $\smash{M_4^{\delta}}=\nu_{\widetilde{U}}([-1,1]\setminus[-\delta,\delta])$,
\begin{displaymath}
S_{\widetilde{U}}^{\delta,2}(z)\leq\int_0^1y\nu_{\widetilde{U}}|_{[-1,1]\setminus[-\delta,\delta]}(\dx y)\leq M_4^{\delta}
\end{displaymath}
for $z>1$. Further, $\smash{S_{\widetilde{U}}^{\delta,2}}$ is continuous on $[0,\infty)\setminus\{1\}$, as can be seen from applying a similar argument as for $\smash{S_{\eta}^{\delta,2}}$. Writing
\begin{displaymath}
\int_0^{\infty}xS_{\widetilde{U}}^{\delta,2}(\tfrac{z}{x})f_{\mu}(x)\dx x=\int_0^{\infty}x\I_{(0,z)}(x)S_{\widetilde{U}}^{\delta,2}(\tfrac{z}{x})f_{\mu}(x)\dx x+\int_0^{\infty}x\I_{(z,\infty)}(x)S_{\widetilde{U}}^{\delta,2}(\tfrac{z}{x})f_{\mu}(x)\dx x,
\end{displaymath}
the integrand can be bounded by $\smash{(x\I_{(0,z)}(x)M_4^{\delta}+z\I_{(z,\infty)}(x)M_4^{\delta})f_{\mu}(x)}$ such that the continuity of the mapping ${z\mapsto\smash{\int_0^{\infty}xS_{\widetilde{U}}^{\delta,2}(\tfrac{z}{x})f_{\mu}(x)\dx x}}$ in $z_0>0$ follows by dominated convergence. Since we can apply a similar argument for the continuity in $z_0<0$ of the corresponding function on the negative real numbers and we have for $z_0=0$ that
\begin{displaymath}
\lim_{z\downarrow0}\int_0^{\infty}xS_{\widetilde{U}}^{\delta,2}(\tfrac{z}{x})f_{\mu}(x)\dx x=\int_0^{\infty}xS_{\widetilde{U}}^{\delta,2}(0)f_{\mu}(x)\dx x=0,
\end{displaymath}
it follows that the mapping
\begin{displaymath}
z\mapsto\I_{\{z>0\}}\int_0^{\infty}xS_{\widetilde{U}}^{\delta,2}(\tfrac{z}{x})f_{\mu}(x)\dx x+\I_{\{z<0\}}\int_{-\infty}^0|x|S_{\widetilde{U}}^{\delta,2}(\tfrac{z}{x})f_{\mu}(x)\dx x
\end{displaymath}
is continuous on $\RR$. Therefore, it only remains to consider the term involving~$\smash{S_{\widetilde{U}}^{\delta,1}}$. In order to do so, observe that the support of $\smash{S_{\widetilde{U}}^{\delta,1}}$ is contained in the interval $[1-\delta,1+\delta]$, that $\smash{S_{\widetilde{U}}^{\delta,1}}\leq S_{\widetilde{U}}$ by definition and that, as a consequence of the integrability of $S_{\widetilde{U}}$ (cf. Lemma~\ref{lem-functions}) we have that
\begin{displaymath}
0\leq\lim_{\delta\downarrow0}\int_{\RR}S_{\widetilde{U}}^{\delta,1}(x)\dx x\leq\lim_{\delta\downarrow0}\int_{1-\delta}^{1+\delta}S_{\widetilde{U}}(x)\dx x=0.
\end{displaymath}
Using the substitution $v=z/x$ for $z>0$, it follows that
\begin{displaymath}
\int_0^{\infty}xS_{\widetilde{U}}^{\delta,1}(\tfrac{z}{x})f_{\mu}(x)\dx x=\int_{\frac{z}{1+\delta}}^{\frac{z}{1-\delta}}xS_{\widetilde{U}}^{\delta,1}(\tfrac{z}{x})f_{\mu}(x)\dx x=\int_{1-\delta}^{1+\delta}\frac{z^2}{v^3}S_{\widetilde{U}}^{\delta,1}(v)f_{\mu}(\tfrac{z}{v})\dx v.
\end{displaymath}
Since $f_{\mu}$ is locally bounded on $\RR\setminus\{0\}$ by part (iii) of the Corollary, the above quantity becomes arbitrarily small for sufficiently small $\delta>0$ when $z\in(z_0/2,3z_0/2)$ for $z_0>0$. Together with~\eqref{decomp-S-U} and the already established continuity of the terms involving $\smash{S_{\eta}^{\delta,2}}$, it follows that the mapping
\begin{displaymath}
z\mapsto\I_{\{z>0\}}\int_0^{\infty}xS_{\widetilde{U}}(\tfrac{z}{x})f_{\mu}(x)\dx x+\I_{\{z<0\}}\int_{-\infty}^0|x|S_{\widetilde{U}}(\tfrac{z}{x})f_{\mu}(x)\dx x
\end{displaymath}
is continuous on $\RR\setminus\{0\}$. The desired continuity of $f_{\mu}$ on $\RR\setminus\{0\}$ hence follows from~\eqref{equation-density}.

(i), Continuity: Now assume that $\sigma_{\eta}^2>0$. As $\mu$ has a density on $\RR$, it follows that $\mu(\{0\})=0$ and using the same argument as in the proof of part (ii), it is sufficient to show that the mappings~$z\mapsto (S_{\eta}\ast f_{\mu})(z)$ and
\begin{displaymath}
z\mapsto\I_{\{z>0\}}\int_0^{\infty}xS_{\widetilde{U}}(\tfrac{z}{x})f_{\mu}(x)\dx x+\I_{\{z<0\}}\int_{-\infty}^0|x|S_{\widetilde{U}}(\tfrac{z}{x})f_{\mu}(x)\dx x
\end{displaymath}
are continuous in $z=0$. Observe that by (iii), $f_{\mu}$ is not only locally bounded on $\RR\setminus\{0\}$, but on $\RR$ whenever $\sigma_{\eta}^2>0$, such that we can use the methods from part (ii) also for $z=0$ in this case. Note that $f_{\mu}$ is in particular bounded on $[-1,1]$, and that
\begin{displaymath}
\lim_{\delta\downarrow0} \int_{-1}^1 S_{\eta}^{\delta,1}(z)\dx z=\lim_{\delta\downarrow0} \int_{-1}^1 S_{\widetilde{U}}^{\delta,1}(z)\dx z=0.
\end{displaymath}
The terms including $S_{\widetilde{U}}^{\delta,1}$ and $S_{\eta}^{\delta,1}$ thus become arbitrarily small in a neighborhood of zero. Since the terms involving $\smash{S_{\widetilde{U}}^{\delta,2}}$ and $\smash{S_{\eta}^{\delta,2}}$ are again continuous, we find that $f_{\mu}$ is also continuous in $z=0$. This finishes the proof.
\end{proof}

\begin{remark}\rm{
It seems tempting to iterate the proof of Corollary~\ref{existence-density} to obtain further smoothness properties of $f_{\mu}$. Such an argument would require being able to show that $f_{\mu}\in C(\RR)$ implies $S_{\eta}\ast f_{\mu}\in C^1(\RR)$ or at least $S_{\eta}\ast f_{\mu}\in C^1(\RR\setminus\{0\})$, as well as similar statements for the other quantities on the right-hand side of~\eqref{equation-density}. This claim is, however, not true in general. A counterexample is given by $\nu_{\eta}(\dx x)=x^{-5/2}\I_{(0,1)}(x)\dx x$ and $f\in C_c(\RR)$ being a density that satisfies $f(x)=c((x-2)^{1/3}+2)$ for $x\in[2,3]$ and $f(x)=(2-(2-x)^{1/3})$ for $x\in[1,2]$, where $c>0$ is a suitable norming constant. Since $S_{\eta}(x)\sim4/3x^{-1/2}$ as $x\downarrow0$ and $f'(x)=\tfrac{c}{3}|x-2|^{-2/3}$ for $x\in[1,3]$, an application of Fatou's lemma shows that
\begin{displaymath}
\liminf_{x\downarrow 2}\frac{(S_{\eta}\ast f)(x)-(S_{\eta}\ast f)(2)}{x-2}=\infty
\end{displaymath}
such that $S_{\eta}\ast f$ is not differentiable in $x=2$. Hence, an easy iterative argument seems not to be possible in the general case considered in Theorem~\ref{main} and Corollary~\ref{existence-density}. Observe, however, that Corollary~\ref{corollary-diffdensity} gives conditions for $\smash{f_{\mu}\in C^1(\RR\setminus\{0\})}$ and Example~\ref{ex-potentialmeasure} below gives a concrete example when~$\smash{\sigma_{\eta}^2>0}$ and $\smash{f_{\mu}\in C^1(\RR\setminus\{0\})\setminus C^1(\RR)}$. Furthermore, note that restricting the characteristics of the L\'evy processes involved may yield much stronger smoothness properties than the general case, e.g. if $\eta_t=t$, where the density of the law of the killed exponential functional with $q\geq0$ is infinitely often differentiable on $\RR\setminus\{0\}$ for most choices of $\xi$~(see \cite[Thm.~2.4(3)]{PatieSavov}).
}\end{remark}

\begin{proof}[Proof of Corollary~\ref{corollary-finitemoments}]
(i) Observe first that $\smash{S_{\eta}^b=S_{\eta}^{FM}-S_{\eta}}$ and $\smash{S_{\widetilde{U}}^b=S_{\widetilde{U}}^{FM}-S_{\widetilde{U}}}$ are bounded functions vanishing at infinity and that $\smash{S_{\widetilde{U}}^b=0}$ for $z\leq1$. Thus, the convolution~$\smash{S_{\eta}^{FM}\ast\mu}$ can be written as a sum of an integrable and a bounded function and  hence is locally integrable with respect to~$\lambda$. The analogue of Lemma~\ref{lem-functions2} involves the measure~$\rho^{FM}$, which takes the form
\begin{displaymath}
\rho^{FM}(\dx z)=\rho(\dx z)+\Big(\I_{\{z>0\}}\int_0^zS_{\widetilde{U}}^b(\tfrac{z}{x})\mu(\dx x)-\I_{\{z<0\}}\int_z^0xS_{\widetilde{U}}(\tfrac{z}{x})\mu(\dx x)\Big)\dx z,
\end{displaymath}
from which it is visible that the right-hand side of~\eqref{eq-mu-fm} defines a locally finite measure. As the moment condition implies that both $\int_{\RR\setminus[-1,1]}|x|\nu_{\eta}(\dx y)$ and $\int_{(1,\infty)}|x|\nu_{\widetilde{U}}(\dx y)$ are finite, it follows that
\begin{align}
&\int_{\RR}\big(f(x+y)-f(x)-yf'(x)\I_{|y|\leq1}(y)\big)\nu_{\eta}(\dx y)+\gamma_{\eta}f'(x)\nonumber\\
&\quad=\int_{\RR}\big(f(x+y)-f(x)-yf'(x)\big)\nu_{\eta}(\dx y)+\gamma_{\eta}^1f'(x)\label{integral-finitemoments}
\end{align}
with a similar relation also holding true for $\smash{\widetilde{U}}$. One now follows the proofs of Lemmas~\ref{rewriting1} and~\ref{rewriting2}, i.e. considers the integral with respect to $\mu$, shows that Fubini's Theorem is applicable for the terms involving multiple integrals and thus recovers a similar distribution~$G^{FM}(\dx z)$. To show e.g. that
\begin{displaymath}
\int_{1+}^{\infty}\int_0^{\infty}\int_x^{x+xy}|f''(t)(x+xy-t)|\dx t\mu(\dx x)\nu_{\widetilde{U}}(\dx y)<\infty
\end{displaymath}
for $f\in C_c^{\infty}(\RR)$ with $\supp(f)\subset[-R,R]$, observe that the inner integral can be estimated by $RC(x+xy)\leq R^2C(1+y)$ for a suitable constant $C\geq0$ such that $|f''(t)|\leq C$ for~$t\in[-R,R]$. Together with the moment condition, this yields that the triple integral, too, is finite. Following the remaining steps of the proof of Theorem~\ref{main}, we also obtain that
\begin{displaymath}
\lim_{t\rightarrow\infty}\frac{1}{\ln(t)}\int_1^z\frac{1}{z^2}\Big(S_{\eta}^b(z)+\int_0^zxS_{\widetilde{U}}^b(\tfrac{z}{x})\mu(\dx x)\Big)\dx z=0
\end{displaymath}
which yields Equation~\eqref{eq-mu-fm} as claimed.\\
(ii) Since $\smash{\ew|\cE(U)_1|=\ew\cE(U)_1=\re^{\ew U_1}}$ (see~\cite[Prop.~3.1]{Behme2011}) and $\smash{\gamma_{\widetilde{U}}^1=\ew \widetilde{U}_1=\ew U_1-q}$ by definition, the condition $\ew|\cE(U)_1|<\re^q$ is equivalent to $\gamma_{\widetilde{U}}^1<0$. Further, $\ew|\cE(U)_1|<\re^q$ implies $\smash{\int_{\RR}|x|\mu(\dx x)<\infty}$, as is shown in~\cite[Thm.~3.1]{Behme2011}. Let $\smash{G^{FM}(\dx z)}$ denote the right-hand side of~\eqref{eq-mu-fm}. To determine the value of the constant, we use a similar approach as in Lemma~\ref{constants}, showing that
\begin{equation}\label{eq-limitconstants2}
\lim_{t\rightarrow\infty}t\int_t^{\infty}\frac{1}{z^2}G^{FM}(\dx z)=-\int_{0+}^{\infty}\big(\gamma_{\eta}^1+x\gamma_{\widetilde{U}}^1\big)\mu(\dx x).
\end{equation}
To see that~\eqref{eq-limitconstants2} holds, observe first that $\lim_{t\rightarrow\infty}t\mu((t,\infty))=0$ as a consequence of $t\mu((t,\infty))\leq\int_{(t,\infty)}|x|\mu(\dx x)<\infty$. This implies that
\begin{displaymath}
\lim_{t\rightarrow\infty}t\int_t^{\infty}\Big(\frac{\sigma_{\eta}^2}{2z^2}+\frac{\sigma_{\widetilde{U}}^2}{2}\Big)\mu(\dx z)=0.
\end{displaymath}
Further, we find that
\begin{displaymath}
\lim_{t\rightarrow\infty}t\int_t^{\infty}\frac{1}{z^2}\big(S_{\eta}^{FM}\ast\mu\big)(z)\dx z\leq\lim_{t\rightarrow\infty}\int_t^{\infty}(S_{\eta}^{FM}\ast\mu)(z)\dx z=0,
\end{displaymath}
since $S_{\eta}^{FM}\ast\mu=S_{\eta}\ast\mu+S_{\eta}^b\ast\mu$, where $S_{\eta}\ast\mu$ is integrable with respect to $\lambda$ and $S_{\eta}^b\ast\mu$ is bounded with~$\lim_{t\rightarrow\infty} S_{\eta}^b\ast\mu(z)=0$. Next, observe that $S_{\widetilde{U}}$ is bounded on~${[0,\infty)\setminus[1/2,2]}$ and that $\lim_{z\rightarrow\infty} S_{\widetilde{U}}(z)=0$. Since $\int |x|\mu(\dx x)<\infty$, an application of Lebesgue's dominated convergence theorem yields that
\begin{displaymath}
\lim_{z\rightarrow\infty}\int_{(0,\frac{z}{2})\cup(2z,\infty)}xS_{\widetilde{U}}^{FM}(\tfrac{z}{x})\mu(\dx x)=0
\end{displaymath}
and hence
\begin{displaymath}
\lim_{t\rightarrow\infty}t\int_t^{\infty}\frac{1}{z^2}\int_{(0,\frac{z}{2})\cup(2z,\infty)}xS_{\widetilde{U}}^{FM}(\tfrac{z}{x})\mu(\dx x)\dx z=0.
\end{displaymath}
For $z/2\leq x\leq z$ we find, similar to~\eqref{eq-limitest1} and~\eqref{eq-limitest2}, that for some constant $C>0$
\begin{align*}
t\int_t^{\infty}\frac{1}{z^2}\int_{\frac{z}{2}}^{2z}xS_{\widetilde{U}}^{FM}(\tfrac{z}{x})\mu(\dx x)
\leq t\int_{\frac{t}{2}}^{\infty}x\int_{\frac{x}{2}}^{\infty}\frac{1}{z^2}S_{\widetilde{U}}^{FM}(\tfrac{z}{x})\dx z\mu(\dx x)
\leq t\int_{\frac{t}{2}}^{\infty}x\frac{C}{x}\mu(\dx x),
\end{align*}
where the right-hand side converges to zero as $t\rightarrow\infty$. Lastly, observe that
\begin{displaymath}
\lim_{t\rightarrow\infty}t\int_t^{\infty}\frac{1}{z^2}\int_{0+}^z\big(\gamma_{\eta}^1+x\gamma_{\widetilde{U}}^1\big)\mu(\dx x)\dx z=\int_{0+}^{\infty}\big(\gamma_{\eta}^1+x\gamma_{\widetilde{U}}^1\big)\mu(\dx x),
\end{displaymath}
which yields the value of $K$ and thus finishes the proof of~\eqref{eq-limitconstants2}. That the constant can also be written as $\smash{\int_{-\infty}^0\big(\gamma_{\eta}^1+x\gamma_{\widetilde{U}}^1\big)\mu(\dx x)}$ follows by a similar argument considering~$|t|\smash{\int_{-\infty}^tz^{-2}G^{FM}(\dx z)}$ for ${t\rightarrow-\infty}$, or alternatively from ${\ew(V_{q,\xi,\eta})\ew(\widetilde{U}_1)=-\ew(\eta_1)}$ (cf.~\cite[Thm.~3.3a]{Behme2011}). Now assume that $\sigma_{\eta}^2+\sigma_{\widetilde{U}}^2>0$. By Corollary~\ref{existence-density}, $\mu|_{\RR\setminus\{0\}}$ has a density $f_{\mu}$. Note, however, that rearranging the terms in~\eqref{eq-mu-fm} and using the positivity of~$S_{\eta}^{FM}$ and~$S_{\widetilde{U}}^{FM}$ as in the proof of Corollary~\ref{existence-density} leads to
\begin{align*}
\Bigl(\frac{1}{2}\sigma_{\eta}^2+\frac{1}{2}z^2\sigma_{\widetilde{U}}^2\Bigr)f_{\mu}(z)\leq K+\int_{0+}^z\big(\gamma_{\eta}^1+\gamma_{\widetilde{U}}^1\big)\mu(\dx x)\leq |K|+|\gamma_{\eta}^1|+|\gamma_{\widetilde{U}}^1|\int_{\RR}|x|\mu(\dx x)<\infty
\end{align*}
here due to the moment condition. Thus, $f_{\mu}$ is bounded.
\end{proof}

\begin{proof}[Proof of Corollary~\ref{corollary-finitevariation}]
Observe first that $\smash{B_{\eta}^I=B_{\eta}^{FV}-B_{\eta}}$ and $\smash{B_{\widetilde{U}}^I=B_{\widetilde{U}}^{FV}-B_{\widetilde{U}}}$ are integrable with respect to $\lambda$ due to the finite variation condition. In particular, we have that~$B_{\eta}^{FV}\ast\mu=B_{\eta}\ast\mu+B_{\eta}^I\ast\mu$ is the sum of a bounded and an integrable function and hence locally integrable. In order to obtain the analogue of Lemma~\ref{lem-functions} in the finite variation case, one needs to show that the mapping $z\mapsto\smash{\int_{0+}^{\infty}B_{\widetilde{U}}(\tfrac{z}{x})\mu(\dx x)}$ is locally integrable with respect to $\lambda$. Since $B_{\widetilde{U}}$ is bounded, it is sufficient to consider $B_{\widetilde{U}}^I$ for which we find
\begin{align*}
\int_{0+}^R\int_0^{z-}B_{\widetilde{U}}^I(\tfrac{z}{x})\mu(\dx x)\dx z&=\int_{0+}^R\int_{x+}^R\int_{\tfrac{z}{x}-1}^1\nu_{\widetilde{U}}(\dx y)\dx z\mu(\dx x)\\
&\leq\int_{0+}^R\int_{0+}^1\int_x^{x+xy}\dx z\nu_{\widetilde{U}}(\dx y)\mu(\dx x)\leq R\mu([0,R])\int_{0+}^1y\nu_{\widetilde{U}}(\dx y),
\end{align*}
showing that the triple integral is finite due to the finite variation condition. The other quantities can be estimated similarly. Thus, the right-hand side of~\eqref{eq-mu-fv} defines a locally finite measure, which we denote by $G^{FV}(\dx z)$. With the jump part of $\widetilde{U}$ and $\eta$ being of finite variation, it follows that $\smash{\int_{[-1,1]}|y|\nu_{\eta}(\dx y)}$ and $\smash{\int_{[-1,1]}|y|\nu_{\widetilde{U}}(\dx y)}$ are finite, in particular
\begin{align}
&\int_{\RR}\big(f(x+y)-f(x)-yf'(x)\I_{|y|\leq1}(y)\big)\nu_{\eta}(\dx y)+\gamma_{\eta}f'(x)\nonumber\\
&\quad=\int_{\RR}\big(f(x+y)-f(x)\big)\nu_{\eta}(\dx y)+\gamma_{\eta}^0f'(x)\label{integral-finitevariation}
\end{align}
with a similar relation also holding true for $\smash{\widetilde{U}}$. As in the proof of Corollary~\ref{corollary-finitemoments}, there is no need to split the integrals with respect to L\'{e}vy measures when rewriting as in Lemma~\ref{rewriting1} such that the jumps of $\eta$ and $\smash{\widetilde{U}}$, respectively, only yield a single term. Since also $\sigma_{\eta}^2=\sigma_{\widetilde{U}}^2=0$ by assumption, all terms can be rewritten to only include $f'$ and there is no need to consider antiderivatives when following the argument of Lemma~\ref{rewriting2}. This implies that the distribution~$G^{FV}$ obtained satisfies $\smash{\int_{\RR}f'(z)G^{FV}(\dx z)}=0$ for all $f\in C_c^{\infty}(\RR)$, hence $(G^{FV})'=0$, giving the form $G^{FV}(\dx z)=C\dx z$ for a single real constant~$C$. We have thus obtained an equivalent to~\eqref{eq-mu} in the finite variation case. In order for the constant to vanish, we need, similar to~\eqref{limit-constants1}, that
\begin{align}\label{limit-constants2}
C=\lim_{t\rightarrow\infty}\frac{1}{\ln(t)}\int_1^t\frac{1}{z}G^{FV}(\dx z)=0.
\end{align}
Using the results obtained in the proof of Lemma~\ref{constants}, one can directly conclude that the drift term, as well as the terms involving $B_{\eta}$ and $B_{\widetilde{U}}$ as defined in~\eqref{def-B-eta} and~\eqref{def-B-U}, respectively, satisfy the desired asymptotics, leaving only $B_{\eta}^I$ and $B_{\widetilde{U}}^I$ to be considered. Since $B_{\eta}^0\ast\mu$ is integrable with respect to $\lambda$, it readily follows that
\begin{align*}
0\leq\lim_{t\rightarrow\infty}\frac{1}{\ln(t)}\int_1^t\frac{1}{z}\big(B_{\eta}^I\ast\mu\big)(z)\dx z\leq\lim_{t\rightarrow\infty}\frac{1}{\ln(t)}\int_{\RR}\frac{1}{z}\big(B_{\eta}^I\ast\mu\big)(z)\dx z=0
\end{align*}
Further, treating $B_{\widetilde{U}}^I$ similar to $S_{\widetilde{U}}$ in Lemma~\ref{constants}, we find for $x>2z$ that
\begin{align*}
\int_{2z}^{\infty}B_{\widetilde{U}}^I(\tfrac{z}{x})\mu(\dx x)&=\int_{2z}^{\infty}\int_{-1}^{\frac{z}{x}-1}\nu_{\widetilde{U}}(\dx y)\mu(\dx x)=\int_{-1}^{-\frac{1}{2}}\int_{2z}^{\frac{z}{y+1}}\mu(\dx x)\nu_{\widetilde{U}}(\dx y)\\
&\leq\nu_{\widetilde{U}}\big([-1,-\tfrac{1}{2}]\big)\mu\big([2z,\infty]\big),
\end{align*}
which converges to zero as $z\rightarrow\infty$, and for $x\leq2z$ that
\begin{align*}
&\int_1^t\frac{1}{z}\int_{\frac{z}{2}}^{2z}B_{\widetilde{U}}^I(\tfrac{z}{x})\mu(\dx x)\dx z=\int_{\frac{1}{2}}^t\int_{\max\{\frac{x}{2},1\}}^{\min\{t,2x\}}\frac{1}{z}B_{\widetilde{U}}^I(\tfrac{z}{x})\dx z\mu(\dx x).
\end{align*}
Here, note that $B_{\widetilde{U}}^{FV}$ can be estimated by a constant if the argument is bounded away from the singularity at $z=x$ and that a suitable substitution implies
\begin{align*}
\int_{x(1-\varepsilon)}^{x(1+\varepsilon)}\frac{1}{z}B_{\widetilde{U}}^I(\tfrac{z}{x})\dx z\leq\Big(\frac{1}{1-\varepsilon}\int_{-\varepsilon}^0\nu_{\widetilde{U}}\big([-1,s)\big)\dx s+\int_0^{\varepsilon}\nu_{\widetilde{U}}\big((s,\infty)\big)\dx s\Big)\frac{1}{x}.
\end{align*}
As $\widetilde{U}$ is of finite variation by assumption, it follows that the above term is finite and thus
\begin{align*}
\lim_{t\rightarrow\infty}\frac{1}{\ln(t)}\int_1^t\frac{1}{z}\int_0^{\infty}B_{\widetilde{U}}^I(\tfrac{z}{x})\mu(\dx x)\dx z=0,
\end{align*}
yielding~\eqref{limit-constants2} as claimed.
\end{proof}

\begin{proof}[Proof of Corollary~\ref{corollary-diffdensity}]
With the jump parts of the processes $\eta$ and $\smash{\widetilde{U}}$ being of finite variation, we can follow the proof of Corollary~\ref{corollary-finitevariation} and rewrite their contribution to the distribution $G$ in terms of $B_{\eta}^{FV}$ and $B_{\widetilde{U}}^{FV}$. However, as $\sigma_{\eta}^2+\sigma_{\widetilde{U}}^2>0$, an argument similar to Lemmas~\ref{rewriting2} to~\ref{constants} is needed to find the equivalent to~\eqref{eq-mu} for the case considered. Note in particular that the desired asymptotics for~$B_{\eta}^{FV}$ and~$B_{\widetilde{U}}^{FV}$ follow directly from~\eqref{limit-constants2} in the proof of Corollary~\ref{corollary-finitevariation}. Since $\mu$ has density $f_{\mu}$ on $\RR\setminus\{0\}$ by Corollary~\ref{existence-density}, we also find an equivalent to Equation~\eqref{equation-density} which is given for a suitable constant $K\in\RR$~by
\begin{align}
\Bigl(\frac{1}{2}\sigma_{\eta}^2+\frac{1}{2}z^2\sigma_{\widetilde{U}}^2\Bigr)f_{\mu}(z)&=-K+\int_{0+}^z\bigl(\gamma_{\eta}^0+x\gamma_{\widetilde{U}}^0\bigr)f_{\mu}(x)\dx x+\I_{\{z<0\}}\gamma_{\eta}^0\mu(\{0\})\nonumber\\
&\ +\int_{0+}^z(B_{\eta}^{FV}\ast f_{\mu})(x)\dx x+\int_{0+}^zB_{\eta}^{FV}(x)\dx x\mu(\{0\})\label{eq-density-beforediff}\\
&\ +\int_{0+}^z\Big(\I_{\{t>0\}}\int_0^{\infty}B_{\widetilde{U}}^{FV}(\tfrac{t}{x})f_{\mu}(x)\dx x-\I_{\{t<0\}}\int_{-\infty}^0B_{\widetilde{U}}^{FV}(\tfrac{t}{x})f_{\mu}(x)\dx x\Big)\dx t.\nonumber
\end{align}
Here, the terms involving $\gamma_{\eta}^0$, $\gamma_{\widetilde{U}}^0$, $B_{\eta}^{FV}$ and $B_{\widetilde{U}}^{FV}$ are locally integrable with respect to the Lebesgue measure as a result of calculations similar to Lemma~\ref{lem-functions}. This implies that the respective integrals are differentiable $\lambda$-a.e. (see e.g.~\cite[Thm.~6.3.6]{Cohn2010}) such that the right-hand side of~\eqref{eq-density-beforediff} and thus~${(\frac{1}{2}\sigma_{\eta}^2+\frac{1}{2}z^2\sigma_{\widetilde{U}}^2)f_{\mu}(z)}$ is differentiable $\lambda$-a.e., implying that this must hold for $f_{\mu}$ as well. Equation~\eqref{eq-mu-diffdensity} now follows by differentiation.

Further, observe that $f_{\mu}\in C^0(\RR\setminus\{0\})$ by Corollary~\ref{existence-density}. Hence, differentiability of~$f_{\mu}$ follows by showing that the terms on the right-hand side of~\eqref{eq-density-beforediff} are in ${C^1(\RR\setminus\{0\})}$, or equivalently, by the fundamental theorem of calculus, that the functions that are integrated over $(0,z]$ are continuous on~$\RR\setminus\{0\}$. As this is trivially satisfied for the mapping ${x\mapsto\smash{(\gamma_{\eta}^0+x\gamma_{\widetilde{U}}^0)f_{\mu}(x)}}$ and the assumptions of both~(i) and~(ii) imply that~${\mu(\{0\})=0}$~(see~\cite[Cor.~6.11]{BLRRPart1}), it remains to consider the terms involving $\smash{B_{\eta}^{FV}}$ and $\smash{B_{\widetilde{U}}^{FV}}$. Similar to the treatment of $S_{\eta}$ in Corollary~\ref{existence-density}, let $x_0>0$, choose $0<\varepsilon<x_0/2$ and define $B_{\eta}^{\varepsilon}$ by replacing~$\nu_{\eta}$ by $\nu_{\eta}|_{\RR\setminus[-\varepsilon,\varepsilon]}$ in~\eqref{def-B-eta-FV}. Then~$B_{\eta}^{\varepsilon}$ is bounded and continuous in all but countably many points such that $x\mapsto \smash{(B_{\eta}^{\varepsilon}\ast f_{\mu})(x)=\int_{\RR}B_{\eta}^{\varepsilon}(x-t)f_{\mu}(t)\dx t}$ is continuous in~$x_0$ by dominated convergence. Next, observe that the remainder $\smash{B_{\eta}^{FV}-B_{\eta}^{\varepsilon}}$ is only supported on a subset of~$[-\varepsilon,\varepsilon]$ and integrable due to the finite variation condition. Therefore, the mapping
\begin{displaymath}
x\mapsto \big(\big(B_{\eta}^{FV}-B_{\eta}^{\varepsilon}\big)\ast f_{\mu}\big)(x)=\int_{-\varepsilon}^{\varepsilon}f_{\mu}(x-t)\big(B_{\eta}^{FV}-B_{\eta}^{\varepsilon}\big)(t)\dx t
\end{displaymath}
is continuous by another application of Lebesgue's dominated convergence theorem as $f_{\mu}(x-t)$ is uniformly bounded in ${x\in[x_0-\varepsilon,x_0+\varepsilon]}$ and $t\in[-\varepsilon,\varepsilon]$. Applying a similar argument for $x_0<0$, it follows that $x\mapsto(B_{\eta}^{FV}\ast f_{\mu})(x)$ is continuous on $\RR\setminus\{0\}$ as desired.

The terms involving $B_{\widetilde{U}}^{FV}$ can be treated similarly to the ones involving $S_{\widetilde{U}}$ in Corollary~\ref{existence-density}. First, decompose
\begin{equation}\label{decomp-integral-B-U-FV}
\int_0^{\infty}B_{\widetilde{U}}^{FV}(\tfrac{t}{x})f_{\mu}(x)\dx x=\int_{(0,\frac{t}{2})\cup(\frac{3}{2}t,\infty)}B_{\widetilde{U}}^{FV}(\tfrac{t}{x})f_{\mu}(x)\dx x+\int_{\frac{t}{2}}^{\frac{3}{2}t}B_{\widetilde{U}}^{FV}(\tfrac{t}{x})f_{\mu}(x)\dx x
\end{equation}
and fix $t_0>0$. Observe that $\smash{B_{\widetilde{U}}^{FV}}$ is bounded on $\RR\setminus[2/3,2]$ and that the mapping ${t\mapsto\smash{\I_{(0,\frac{t}{2})\cup(\frac{3}{2}t,\infty)}(x)B_{\widetilde{U}}^{FV}(\tfrac{t}{x})}}$ is continuous in $t_0$ for all but countably many $x>0$. Therefore, continuity of the mapping $t\mapsto\smash{\int_{\RR}\I_{(0,\frac{t}{2})\cup(\frac{3}{2}t,\infty)}(x)B_{\widetilde{U}}^{FV}(\tfrac{t}{x})f_{\mu}(x)\dx x}$ in $t_0>0$ follows by dominated convergence. Further, the second term on the right-hand side of~\eqref{decomp-integral-B-U-FV} can be rewritten using a suitable substitution, yielding
\begin{align*}
\int_{\frac{t}{2}}^{\frac{3}{2}t}B_{\widetilde{U}}^{FV}(\tfrac{t}{x})f_{\mu}(x)\dx x&=\int_{\frac{t}{2}}^t\nu_{\widetilde{U}}((\tfrac{t}{x}-1,\infty))f_{\mu}(x)\dx x-\int_t^{\frac{3}{2}t}\nu_{\widetilde{U}}((-\infty,\tfrac{t}{x}-1))f_{\mu}(x)\dx x\\
&=\int_0^1\nu_{\widetilde{U}}((w,\infty))f_{\mu}(\tfrac{t}{w+1})\frac{t}{(w+1)^2}\dx w\\
&\quad-\int_{-\frac{1}{3}}^0\nu_{\widetilde{U}}((-\infty,w))f_{\mu}(\tfrac{t}{w+1})\frac{t}{(w+1)^2}\dx w,
\end{align*}
where choosing $0<\varepsilon<t_0/2$ implies that $f_{\mu}(t/(w+1))$ can be  uniformly bounded in $t\in[t_0-\varepsilon,t_0+\varepsilon]$ and $w\in[-1/3,1]$. Since
\begin{displaymath}
\int_0^1\nu_{\widetilde{U}}((w,\infty))\frac{1}{(w+1)^2}\dx w+\int_{-\frac{1}{3}}^0\nu_{\widetilde{U}}((-\infty,w))\frac{1}{(w+1)^2}\dx w<\infty,
\end{displaymath}
the right-hand side of~\eqref{decomp-integral-B-U-FV} is continuous in $t_0>0$ by dominated convergence. Since a similar argument can be applied for $t_0<0$, the term is continuous on $\RR\setminus\{0\}$. Therefore, the right-hand side of~\eqref{eq-density-beforediff} and hence the left-hand side of~\eqref{eq-density-beforediff} are in $C^1(\RR\setminus\{0\})$, which shows that~${f_{\mu}\in C^1(\RR\setminus\{0\})}$. In particular, Equation~\eqref{eq-mu-diffdensity} holds for all $z\in\RR\setminus\{0\}$ if the assumptions of part~(i) or~(ii) of the Corollary are satisfied.
\end{proof}

\section{Applications and Examples}\label{s7}
In this section, we consider various applications of the equations in Sections~\ref{s4} and~\ref{s5}, respectively, deriving explicit information on the law of the killed exponential functional in special cases. The first example is concerned with the special case $\xi\equiv0$, which is the L\'evy process~$\eta$ subordinated by a gamma process with parameters 1 and $q>0$, evaluated at time 1. The law of $V_{q,0,\eta}$ is~$q$ times the potential measure of $\eta$, cf. \cite[Def. 30.10]{Sato2013}.
\begin{example}\label{ex-potentialmeasure}\rm{
Let $q>0$ and $\xi\equiv0$, i.e. $V_{q,0,\eta}=\eta_{\tau}$. Since $\sigma_{\xi}=\gamma_{\xi}=0$ and $\nu_{\xi}$ is the zero measure, the limit term in~\eqref{eq-limitchar} vanishes, yielding
\begin{displaymath}
\psi_\eta(u) \phi_{V_{q,\xi,\eta}}(u)=  q (\phi_{V_{q,\xi,\eta}}(u) -1)
\end{displaymath}
such that we recover the known formula for the characteristic function of the potential measure from~\cite[Prop.~37.4]{Sato2013}. One can also use the results in Section~\ref{s5} to give a distributional equation for $\mu=\cL(\eta_{\tau})$, and hence for the potential measure, by observing that the characteristics of $\smash{\widetilde{U}}$ are given by $(0,q\delta_{-1},-q)$ whenever $\xi\equiv0$. For example, if~$\eta$ is of finite variation, one obtains from~\eqref{eq-mu-fv} that
\begin{displaymath}
\gamma_{\eta}^0\mu(\dx z)+\big(B_{\eta}^{FV}\ast\mu\big)(z)\dx z+q\big(\I_{\{z<0\}}\mu((-\infty,z])-\I_{\{z>0\}}\mu([z,\infty))\big)\dx z=0,
\end{displaymath}
and if~$\sigma_{\eta}^2>0$, but the jump part of~$\eta$ is still of finite variation, it follows from Corollaries~\ref{existence-density} and~\ref{corollary-diffdensity} that~$\mu$ has a density~$f_{\mu}\in C^0(\RR)\cap C^1(\RR\setminus\{0\})$ that satisfies
\begin{displaymath}
\frac{1}{2}\sigma_{\eta}^2f_{\mu}'(z)-\gamma_{\eta}^0f_{\mu}(z)=\big(B_{\eta}^{FV}\ast f_{\mu}\big)(z)+q\Big(\I_{\{z<0\}}\int_{-\infty}^z f_{\mu}(x)\dx x-\I_{\{z>0\}}\int_z^{\infty}f_{\mu}(x)\dx x\Big).
\end{displaymath}
In the special case of $\eta$ being a standard Brownian motion, we have $\sigma_{\eta}^2=1$, $\gamma_{\eta}^0=0$ and~$B_{\eta}^{FV}$=0 and one readily checks that the solution of the differential equation is given~by
\begin{displaymath}
f_{\mu}(z)=\sqrt{\frac{q}{2}}\re^{-\sqrt{2q}|z|}=q\Big(\frac{1}{\sqrt{2q}}\re^{-\sqrt{2q}|z|}\Big),
\end{displaymath}
which is $q$ times the potential density given in~\cite[Ex.~30.11]{Sato2013}.
}\end{example}

The following example collects some cases in which the solution of Equation~\eqref{eq-diffeqchar} can be given explicitly.

\begin{example}\rm
(i) Assume that $q>0$ and that $(\xi_t)_{t\geq 0}$ is deterministic and not the zero process, i.e. $\xi_t=\gamma_{\xi} t$ with $\gamma_\xi\neq0$. We need to assure $-2\gamma_\xi<q$ and $\ew\eta_1^2<\infty$ in order to have \eqref{eq-condsecondmoment}. Under this assumption, setting~${\phi:=\phi_{V_{q,\xi,\eta}}}$, Equation \eqref{eq-diffeqchar} reduces~to
\begin{equation*}
\gamma_{\xi} u \phi'(u) + (q-\psi_\eta(u))\phi(u) =q.
\end{equation*}
For any $u>c>0$ the solution to this inhomogeneous first-order ODE is given by
\begin{align}\label{ODEsolve1}
\phi(u)&= \exp\Big(\int_c^u \frac{\psi_\eta(s)-q}{\gamma_\xi s} \dx s \Big) \Big[\phi(c) + \int_c^u \frac{q}{\gamma_\xi t} \exp\Big(- 		\int_c^t \frac{\psi_\eta(s)-q}{\gamma_\xi s} \dx s \Big)\dx t \Big].
\end{align}
Now assume that $\gamma_{\xi}>0$ and that $\psi_\eta(s)\sim\alpha s^{\beta}$ near zero for some $\alpha\in\CC\setminus\{0\}$ and $\beta>0$. Then~$\smash{\int_0^u\frac{\psi_{\eta}(s)}{\gamma_{\xi}s}\dx s}$ exists and letting $c\searrow0$ in~\eqref{ODEsolve1} leads to
\begin{equation}\label{solutionlimit-gen}
\phi(u)= \exp\Big(\int_0^u \frac{\psi_\eta(s)}{\gamma_\xi s} \dx s \Big)u^{-q/\gamma_{\xi}}\int_0^u \frac{q}{\gamma_\xi }t^{q/\gamma_{\xi}-1} \exp\Big(- \int_0^t \frac{\psi_\eta(s)}{\gamma_\xi s} \dx s \Big)\dx t
\end{equation}
for $u>0$. In the trivial case of $\psi_\eta(u)=iu$ where
\begin{equation} \label{eq-functrivial}
V_{q,\xi,\eta}=\int_0^\tau \re^{-\gamma_\xi t} \dx t = \frac{1}{\gamma_\xi} (1-\re^{-\gamma_\xi \tau}),
\end{equation}
Equation~\eqref{solutionlimit-gen} simplifies to
\begin{align*}
\phi(u)=\exp\Big(\frac{i}{\gamma_\xi} u\Big) u^{-q/\gamma_{\xi}}\int_0^u \frac{q}{\gamma_{\xi}}t^{q/\gamma_\xi-1} \exp\Big(-\frac{it}{\gamma_\xi} \Big) \dx t
\end{align*}
for $u>0$, which in the special case $q=\gamma_\xi$ can be further simplified to
\begin{displaymath}
\phi(u)=\frac{qi}{u} \Big(1-\exp\Big(\frac{u}{q} i\Big)\Big), \quad u>0,
\end{displaymath}
as characteristic function of \eqref{eq-functrivial} with $\gamma_\xi=q$. Observe from the explicit form of~$\phi$ that the characteristic function has zeroes such that the law of the killed exponential functional cannot be infinitely divisible in this case. If we assume $\eta$ to be a Brownian motion without drift instead, i.e. $\psi_\eta(u)=\smash{-\frac{\sigma_{\eta}^2}{2} u^2}$, then \eqref{eq-condsecondmoment} holds, and whenever $\gamma_{\xi}>0$, Equation~\eqref{solutionlimit-gen} reduces to
\begin{displaymath}
\phi(u)=\exp\Big(-\frac{\sigma_{\eta}^2}{\gamma_{\xi}}u^2\Big)u^{-q/\gamma_{\xi}}\int_0^u\frac{q}{\gamma_{\xi}}t^{q/\gamma_{\xi}-1}\exp\Big(\frac{\sigma_{\eta}^2}{\gamma_{\xi}}t^2\Big)\dx t.
\end{displaymath}
This can be further simplified for various values of $q/\gamma_{\xi}$, e.g. if $q=2\gamma_{\xi}$, we have
\begin{displaymath}
\phi(u)=\frac{q}{2\sigma_{\eta}^2}u^{-2}\Big(1-\exp\Big(-\frac{2\sigma_{\eta}^2u^2}{q}\Big)\Big),\quad u>0,
\end{displaymath}
and if $q=4\gamma_{\xi}$ we have
\begin{displaymath}
\phi(u)=u^{-4} \frac{q^2}{8\sigma_\eta^4} \Big[  \Big(\frac{4\sigma_\eta^2u^2}{q}-1\Big)+ \exp\Big(-\frac{4u^2\sigma_\eta^2}{q}   \Big) \Big], \quad u>0,
\end{displaymath}
as characteristic function of the killed exponential functional.

Finally, if we assume $\eta$ to be a compound Poisson process with intensity~$\lambda$ and exponentially distributed jumps with parameter $a>0$ such that $\psi_\eta(u)=\lambda \frac{iu}{a-iu}$, then we derive from \eqref{solutionlimit-gen} for $u>0$ that
\begin{align*}
\phi(u)&= (u+ia)^{-\lambda/\gamma_\xi}  u^{-q/\gamma_{\xi}}\int_0^u \frac{q}{\gamma_\xi }t^{q/\gamma_{\xi}-1} (t+ia)^{\lambda/\gamma_\xi} \dx t\\
&=  \left(\frac{a-iu}{a}\right)^{-\lambda/\gamma_\xi} {}_2F_1\Big(\frac{q}{\gamma_\xi},-\frac{\lambda}{\gamma_\xi}; 1+\frac{q}{\gamma_\xi}; \frac{iu}{a}\Big),
\end{align*}
where $_2F_1(\cdot,\cdot;\cdot;\cdot)$ denotes the hypergeometric function (see e.g.~ Formulas 15.3.1 and~15.1.1 in \cite{AbramowitzStegun1972}) and $z^{-\lambda/\gamma_\xi}$ for $z\in\CC$ is interpreted as $\smash{\exp(-\frac{\lambda}{\gamma_\xi}\log(z))}$ with $\log$ denoting the principal branch of the complex logarithm. Setting formally $q=0$ in the above expression, we obtain $\phi(u)=(\frac{a-iu}{a})^{-\lambda/\gamma_\xi}$, which is the characteristic function of the $\text{Gamma}(\lambda/\gamma_\xi,a)$-distribution. Indeed, $V_{0,\xi,\eta}$ is $\text{Gamma}(\lambda/\gamma_\xi,a)$-distributed, cf. \cite[Thm.~2.1(f)]{GjessingPaulsen1997}. Note that we can also obtain this fact from setting $q=0$ and considering $c\searrow0$ in~\eqref{ODEsolve1}.\\
(ii) Let $(\xi_t)_{t\geq 0}$ be a Brownian motion with drift~$\gamma_{\xi}$. In order to have~\eqref{eq-condsecondmoment}, we need to assume that $2(\sigma_\xi^2-\gamma_\xi)<q$ and $\ew\eta_1^2<\infty$. Under this assumption, Equation \eqref{eq-diffeqchar} leads to the following inhomogeneous second-order ODE (again setting $\phi:=\phi_{V_{q,\xi,\eta}}$)
		\begin{align*}
		\frac{\sigma_\xi^2}{2} u^2 \phi''(u) + \Big(\frac{\sigma_\xi^2}{2} -\gamma_\xi \Big) u \phi'(u) + (\psi_\eta(u)-q)\phi(u) = -q,
		\end{align*}
		which for $\gamma_\xi= \frac{\sigma_\xi^2}{2}<\frac{q}{2}$ reduces to
		\begin{align*}
		\frac{\sigma_\xi^2}{2} u^2 \phi''(u) +  (\psi_\eta(u)-q)\phi(u) = -q.
		\end{align*}				
		In particular, assuming $(\eta_t)_{t\geq 0}$ to be a  Brownian motion without drift, the resulting ODE
		\begin{align}\label{ODEsolve4} u^2 \phi''(u) -  \Big(\frac{\sigma_\eta^2}{\sigma_\xi^2}u^2+ \frac{2q}{\sigma_\xi^2}\Big)\phi(u) = - \frac{2q}{\sigma_\xi^2},\end{align}
		is a Bessel-type equation. Using the substitution $\phi_{hom}(u) = \sqrt{u}\, g_{hom}\left( \frac{\sigma_\eta}{\sigma_\xi} u\right)$ for $u>0$, it is easily checked that a function $\phi_{hom}$ satisfies the homogeneous equation corresponding to \eqref{ODEsolve4} on $(0,\infty)$ if and only if $g_{hom}$ satisfies the homogeneous modified Bessel equation $v^2 g''(v) + v g'(v) - (v^2 +  2q/\sigma_\xi^2 +1/4)g(v) = 0$ for $v\in (0,\infty)$. Denoting $\alpha := \sqrt{2q/\sigma_\xi^2 + 1/4} > 0$, two linear independent solutions of this modified Bessel equation are given by the modified Bessel functions $I_\alpha$ and $K_\alpha$ of first and second kind, respectively (cf. \cite[pp.77-78]{Watson}), hence the general solution of the  homogeneous equation corresponding to \eqref{ODEsolve4} is given by
		$$\phi_{hom}(u)=c_1\sqrt{u}I_\alpha\Big(\frac{\sigma_\eta}{\sigma_\xi}u\Big) + c_2\sqrt{u}K_\alpha\Big(\frac{\sigma_\eta}{\sigma_\xi}u\Big),\quad u > 0$$
	with complex constants $c_1,c_2$. Whenever $3\sigma_\xi^2=q$, one easily verifies that a particular solution of~\eqref{ODEsolve4} is given by
		$$\phi_{part}(u)=2\frac{q}{\sigma_\eta^2} u^{-2}=6 \frac{\sigma_\xi^2}{\sigma_\eta^2} u^{-2},$$
		and hence in this case
		\begin{align*}
		\phi(u)&= 6 \frac{\sigma_\xi^2}{\sigma_\eta^2} u^{-2}+ c_1\sqrt{u}I_{5/2}\Big(\frac{\sigma_\eta}{\sigma_\xi}u\Big) + c_2\sqrt{u}K_{5/2}\Big(\frac{\sigma_\eta}{\sigma_\xi}u\Big), \quad u > 0.
		\end{align*}
Observe that 
$$K_{5/2}(z) = \sqrt{\frac{\pi}{2z}} \re^{-z} \left( 1 + \frac{3}{z} + \frac{3}{z^2} \right)$$ and that
$I_{5/2}(z) \sim \re^z /\sqrt{2\pi z}$  as $z\to\infty$ (cf. \cite[p.80 Eqs. (10),(12)]{Watson}). Since $\phi$ is bounded as a characteristic function, we obtain $c_1=0$ when letting $u\to\infty$, and using $\lim_{u\downarrow 0} \phi(u) = 1$ we obtain $c_2 = -\sqrt{8 \sigma_\eta / (\pi \sigma_\xi)}$. Altogether we obtain
$$\phi(u) = 6 \frac{\sigma^2_\xi}{\sigma_\eta^2} u^{-2} - 2 \re^{-\sigma_\eta u/\sigma_\xi} \left( 1 + \frac{3\sigma_\xi}{\sigma_\eta u} + \frac{3\sigma_\xi^2}{\sigma_\eta^2 u^2}\right)$$
for $u>0$ whenever $\gamma_\xi = \sigma_\xi^2/2 = q/6 > 0$ and $\eta$ is a Brownian motion without drift and variance $\sigma_\eta^2$. Replacing $u$ by $|u|$ in the right-hand side the above formula also holds for $u\in \bR\setminus \{0\}$ by symmetry of $\varphi$.
\end{example}

The next example illustrates some of the results of Section~\ref{s5}.
\begin{example}\rm{
Let $q\geq0$ and assume that both $\xi$ and $\eta$ are Brownian motions with or without drift, i.e. ${\eta_t=\sigma_{\eta}B_t+\gamma_{\eta}^0t}$ and ${\xi_t=\sigma_{\xi}W_t+\gamma_{\xi}^0t}$ where $(B_t)_{t\geq0}$ and $(W_t)_{t\geq0}$ denote two independent standard Brownian motions, $\gamma_{\eta}^0,\gamma_{\xi}^0\in\RR$, $\sigma_{\eta}^2+\sigma_{\xi}^2>0$ and $\eta$ is not the zero process. It follows from Corollaries~\ref{existence-density} and~\ref{corollary-diffdensity} that the law of the killed exponential functional is absolutely continuous and that the density $f_{\mu}$ is continously differentiable on $\RR\setminus\{0\}$. Observing that the characteristics of the process $\smash{\widetilde{U}}$ are given by~$(\sigma_{\xi}^2,q\delta_{-1},\gamma_{\widetilde{U}})$ with drift~$\gamma_{\widetilde{U}}^0=-\gamma_{\xi}^0+\sigma_{\xi}^2/2$, we find from~\eqref{eq-mu-diffdensity} that $f_{\mu}$ satisfies
\begin{align}\label{ide-ex-bbdrift}
&\frac{1}{2}\big(\sigma_{\eta}^2+z^2\sigma_{\widetilde{U}}^2\big)f'_{\mu}(z)-\bigl(\gamma_{\eta}^0+z(\gamma_{\widetilde{U}}^0-\sigma_{\widetilde{U}}^2)\bigr)f_{\mu}(z)\\
&\quad+q\I_{\{z>0\}}\int_z^{\infty}f_{\mu}(x)\dx x-q\I_{\{z<0\}}\int_{-\infty}^zf_{\mu}(x)\dx x=0\nonumber
\end{align}
for $\neq0$. Note that the integral terms vanish whenever~$q=0$ such that~\eqref{ide-ex-bbdrift} reduces to an ordinary differential equation. In this case, we obtain
\begin{displaymath}
\frac{f'_{\mu}(z)}{f_{\mu}(z)}=\frac{\gamma_{\eta}^0+(\gamma_{\widetilde{U}}^0-\sigma_{\widetilde{U}}^2)z}{\frac{1}{2}\sigma_{\eta}^2+\frac{1}{2}\sigma_{\widetilde{U}}^2z^2}
\end{displaymath}
for $z\neq0$, from which the explicit solution can be derived by logarithmic integration. Assuming that~$\sigma_{\eta}^2,\sigma_{\xi}^2\neq0$, it follows that
\begin{displaymath}
f_{\mu}(z)=C\big(\sigma_{\eta}^2+z^2\sigma_{\widetilde{U}}^2\big)^{-1+\gamma_{\widetilde{U}}^0/\sigma_{\widetilde{U}}^2}\exp\Big(\frac{2\gamma_{\eta}^0}{\sigma_{\eta}\sigma_{\widetilde{U}}}\arctan\Big(\frac{\sigma_{\widetilde{U}}}{\sigma_{\eta}}x\Big)\Big)
\end{displaymath}
where $C>0$ is a norming constant. Note that even though the equation is solved for~$z>0$ and $z<0$ separately, the continuity of $f_{\mu}$ implies that the same norming constant can be used on both sides. In particular, the result obtained for $f_{\mu}$ above coincides with the density of the exponential functional given in~\cite[Thm.~2.1(d)]{GjessingPaulsen1997}. Let now $q\neq0$. In this case, the integro-differential equation~\eqref{ide-ex-bbdrift} yields an ordinary differential equation for the distribution function ${F_{\mu}(z)=\int_{-\infty}^zf_{\mu}(x)\dx x}$ of the killed exponential functional, which is given by
\begin{align*}
&\frac{1}{2}\big(\sigma_{\eta}^2+z^2\sigma_{\widetilde{U}}^2\big)F''_{\mu}(z)-\bigl(\gamma_{\eta}^0+z(\gamma_{\widetilde{U}}^0-\sigma_{\widetilde{U}}^2)\bigr)F'_{\mu}(z)-qF_{\mu}(z)=-q,\quad z>0,\\
&\frac{1}{2}\big(\sigma_{\eta}^2+z^2\sigma_{\widetilde{U}}^2\big)F''_{\mu}(z)-\bigl(\gamma_{\eta}^0+z(\gamma_{\widetilde{U}}^0-\sigma_{\widetilde{U}}^2)\bigr)F'_{\mu}(z)-qF_{\mu}(z)=0,\quad z<0.
\end{align*}
Exemplarily, we choose~$q=2$, $\sigma_{\xi}^2=4$, $\gamma_{\eta}^0=1$ and $\sigma_{\eta}^2=\gamma_{\xi}^0=0$. In this case it is $V_{q,\xi,\eta}\geq0$ a.s. due to $\eta$ being a deterministic subordinator. Hence,~\eqref{ide-ex-bbdrift} reduces to
\begin{displaymath}
2z^2f_{\mu}'(z)+(2z-1)f_{\mu}(z)+2\int_z^{\infty}f_{\mu}\dx x=0,\quad z>0
\end{displaymath}
and we find that the tail function $T_{\mu}(z)=1-F_{\mu}(z)$ satisfies
\begin{equation}\label{ode-tail}
2z^2T_{\mu}''(z)+(2z-1)T_{\mu}'(z)-2T_{\mu}(z)=0,\quad z>0.
\end{equation}
The general solution of~\eqref{ode-tail} is given by
\begin{displaymath}
T_{\mu}(z)=c_1z\re^{-1/(2z)}+c_2(2z-1)=z(c_1\re^{-1/(2z)}+2c_2)-c_2
\end{displaymath}
and it is readily checked that the constants must satisfy $c_1=2$ and $c_2=-1$ in order to obtain a tail function that satisfies $\lim_{z\downarrow 0}T_{\mu}(z)=1$ and $\lim_{z\rightarrow\infty}T_{\mu}(z)=0$. Deriving $T_{\mu}(z)=1-2z(1-\exp(-\frac{1}{2z}))$, it follows that the density is given by
\begin{displaymath}
f_{\mu}(z)=-T_{\mu}'(z)=2-\Big(\frac{1}{z}+2\Big)\exp\Big(-\frac{1}{2z}\Big)
\end{displaymath}
for $z>0$. Observe in particular that $V_{q,\xi,\eta}\overset{d}{=}Z_1/2Z_2$, where $Z_1$ is uniformly distributed on $[0,1]$ and~$\smash{Z_2\overset{d}{=}\mathrm{Exp}(1)}$ is independent of $Z_1$. This coincides with the results from~\cite[Thm.~2]{Yor92} given in Example~\ref{ex:Yor} of the introduction.
}\end{example}

Observe that, so far, both processes $\xi$ and $\eta$ were assumed to be continuous in the examples considered. We conclude this section by discussing two examples in which $\xi$, and hence $U$, is a pure-jump process.

\begin{example}\label{ex4}\rm{
Let $\xi$ be a Poisson process with intensity $c>0$ and $\eta_t=\sigma_{\eta}B_t$, where~${\sigma_{\eta}^2>0}$ and~$(B_t)_{t\geq0}$ is a standard Brownian motion. Using the connection between $\xi$ and $U$ established in Section~\ref{s2}, it is readily checked that $\sigma_{\widetilde{U}}^2=0$, $\nu_{\widetilde{U}}=c\delta_{\re^{-1}-1}+q\delta_{-1}$, as well as~$\gamma_{\widetilde{U}}^0=-\gamma_{\xi}^0=0$, and it follows that
\begin{align*}
B_{\widetilde{U}}^{FV}=\begin{cases}-q,\quad&\text{if}\ z\in(0,\tfrac{1}{\re}],\\ -(c+q),\quad&\text{if}\ z\in(\tfrac{1}{e},1),\\ 0,\quad&\text{if}\ z\geq1\ \text{or}\ z=0,\end{cases}
\end{align*}
for the function $B_{\widetilde{U}}^{FV}$ as defined in Corollary~\ref{corollary-finitevariation}. By Corollaries~\ref{existence-density} and~\ref{corollary-diffdensity}, $\mu$ has a density $f_{\mu}\in C^0(\RR)\cap C^1(\RR\setminus\{0\})$ that satisfies
\begin{align}
\frac{\sigma_{\eta}^2}{2}f_{\mu}'(z)&=-\I_{\{z>0\}}\Big(q\int_z^{\infty}f_{\mu}(x)\dx x+c\int_z^{\re z}f_{\mu}(x)\dx x\Big)\nonumber\\
&\quad+\I_{\{z<0\}}\Big(q\int_{-\infty}^zf_{\mu}(x)\dx x+c\int_{\re z}^zf_{\mu}(x)\dx x\Big)\label{eq-ex4}
\end{align}
for $z\neq0$. Observe that in particular $\mu(\{0\})=0$ as a consequence of $\sigma_{\eta}^2>0$. Since the right-hand side of~\eqref{eq-ex4} is differentiable, so is the left-hand side, such that we obtain~${f_{\mu}\in C^2(\RR\setminus\{0\})}$, as well as
\begin{displaymath}
\frac{\sigma_{\eta}^2}{2}f_{\mu}''(z)=qf_{\mu}(z)+c\big(f_{\mu}(z)-f_{\mu}(\re z)\big)
\end{displaymath}
for $z\neq0$ by differentiating~\eqref{eq-ex4}.
}\end{example}

\begin{example}\rm{
Assume now that $\xi$ is a compound Poisson process with L\'{e}vy measure~$\nu_{\xi}(\dx x)=\re^{-x}\I_{(0,\infty)}\dx x$, $\eta_t=\sigma_{\eta}B_t+\gamma_{\eta}t$, where $\sigma_{\eta}^2>0$ and $(B_t)_{t\geq0}$ again denotes a standard Brownian motion, as well as $q>0$. As in Example~\ref{ex4}, it follows from Corollaries~\ref{existence-density} and~\ref{corollary-diffdensity} that $\mu$ is absolutely continuous with density $f_{\mu}\in C^0(\RR)\cap C^1(\RR\setminus\{0\})$. Using the relation between $\nu_{\xi}$ and $\nu_{\widetilde{U}}$, we can give the function~$B_{\widetilde{U}}^{FV}$ as
\begin{align*}
B_{\widetilde{U}}^{FV}(z)=\begin{cases}0,\quad &z>1,\\ -(z+q),\quad& z\in[0,1),\end{cases}
\end{align*}
such that Equation~\eqref{eq-mu-diffdensity} reads
\begin{align*}
\frac{1}{2}\sigma_{\eta}^2f_{\mu}'(z)=\gamma_{\eta}f_{\mu}(z)-\I_{\{ z>0\}}\int_z^{\infty}\Big(\frac{z}{x}+q\Big)f_{\mu}(x)\dx x+\I_{\{ z<0\}}\int_{-\infty}^z\Big(\frac{z}{x}+q\Big)f_{\mu}(x)\dx x.
\end{align*}
Since $f_{\mu}\in C^0(\RR)\cap C^1(\RR\setminus\{0\})$, the integral terms are differentiable in $z\neq0$ and it follows that $f_{\mu}'\in C^1(\RR\setminus\{0\})$. Thus, $f_{\mu}\in C^2(\RR\setminus\{0\})$ and differentiating the equation leads to
\begin{align*}
\frac{1}{2}\sigma_{\eta}^2f_{\mu}''(z)&=\gamma_{\eta}f_{\mu}'(z)-\I_{\{z>0\}}\Big(\int_z^{\infty}\frac{1}{x}f_{\mu}(x)\dx x-(1+q)f_{\mu}(z)\Big)\\
&\quad +\I_{\{z<0\}}\Big(\int_{-\infty}^z\frac{1}{x}f_{\mu}(x)\dx x+(1+q)f_{\mu}(z)\Big)
\end{align*}
for $z\neq0$. This shows that $f_{\mu}''\in C^1(\RR\setminus\{0\})$ and hence $f_{\mu}\in C^3(\RR\setminus\{0\})$. Differentiating the equation once more finally eliminates the integrals and leads to the third-order linear~ODE
\begin{align*}
\frac{1}{2}\sigma_{\eta}^2f_{\mu}'''(z)=\gamma_{\eta}f_{\mu}''(z)+(1+q)f_{\mu}'(z)+\frac{1}{z}f_{\mu}(z),
\end{align*}
which is satisfied for all $z\neq0$.
}\end{example}


\end{document}